\theoremstyle{plain}
\newtheorem{thm}{Theorem}[section]
\theoremstyle{plain}
\theoremstyle{plain}
\newtheorem{lemma}[thm]{Lemma}
\theoremstyle{plain}
\newtheorem{defn}[thm]{Definition}
\theoremstyle{plain}
\newtheorem{prop}[thm]{Proposition}
\theoremstyle{definition}
\newtheorem{remark}[thm]{Remark}
\numberwithin{equation}{section}
\DeclareMathOperator{\interior}{int}
\newcommand{\RR}{\mathbb{R}}
\newcommand{\Rd}{\mathbb{R}^d}
\newcommand{\NN}{\mathbb{N}}
\renewcommand{\norm}[1]{\lVert#1\rVert}
\newcommand{\duality}[2]{\langle#1,#2\rangle}
\DeclareMathOperator{\diver}{div}
\newcommand{\dx}{\,\ensuremath{\mathrm{d}}x}
\newcommand{\ds}{\,\ensuremath{\mathrm{d}}s}
\newcommand{\dt}{\,\ensuremath{\mathrm{d}}t}
\newcommand{\dA}{\,\ensuremath{\mathrm{d}}\mathcal{H}^{d-1}}
\newcommand{\nnu}{\boldsymbol{\nu}}
\newcommand{\R}{\mathcal{R}}
\newcommand{\C}{\mathcal{C}}
\newcommand{\T}{\mathcal{T}}
\newcommand{\V}{\mathcal{V}}
\newcommand{\uu}{\boldsymbol{u}}
\newcommand{\epsilonu}{\varepsilon(\boldsymbol{u})}
\newcommand{\epsilonut}{\varepsilon(\partial_t\uu)}
\newcommand{\vv}{\boldsymbol{v}}
\newcommand{\epsilonv}{\varepsilon(\boldsymbol{v})}
\newcommand{\Lambdac}{\Lambda_c}
\newcommand{\oset}[2]{%
  {\operatorname*{#2}\limits^{\vbox to \ex@{\kern-\tw@\ex@
   \hbox{\scriptsize #1}\vss}}}}
\newcommand{\psiu}{\oset{$\smallsmile$}{\Psi}{}}
\newcommand{\psin}{\oset{$\smallfrown$}{\Psi}{}}
\newcommand{\hu}{\oset{$\smallsmile$}{h}{}}
\newcommand{\hn}{\oset{$\smallfrown$}{h}{}}
\newcommand{\Wzu}{\oset{$\smallsmile$}{W}_{3,z}{}}
\newcommand{\Wzn}{\oset{$\smallfrown$}{W}_{3,z}{}}
\newcommand{\into}{\int_{\Omega}}
\newcommand{\intto}{\int_0^T \!\!\!\into}
\newcommand{\intt}{\int_0^T}
\newcommand{\intg}{\int_{\Gamma}}
\newcommand{\intTg}{\int_0^T \!\!\!\intg}
\newcommand{\intk}{\int_{t_{\tau}^{k-1}}^{t_{\tau}^k}}
\newcommand{\Dk}{D_{\tau,k}}
\newcommand{\fk}{f_{\tau}^k}
\newcommand{\phik}{\varphi_{\tau}^k}
\newcommand{\muk}{\mu_{\tau}^k}
\newcommand{\sigmak}{\sigma_{\tau}^k}
\newcommand{\sigmack}{\sigma_{c,\tau}^k}
\newcommand{\sigmagk}{\sigma_{\Gamma,\tau}^k}
\newcommand{\uk}{\uu_{\tau}^k}
\newcommand{\epsilonuk}{\varepsilon(\uu_{\tau}^k)}
\newcommand{\epsilonudk}{\varepsilon(\Dk\uu)}
\newcommand{\vk}{\vv_{\tau}^k}
\newcommand{\epsilonvk}{\varepsilon(\vv_{\tau}^k)}
\newcommand{\zk}{z_{\tau}^k}
\newcommand{\phikm}{\varphi_{\tau}^{k-1}}
\newcommand{\mukm}{\mu_{\tau}^{k-1}}
\newcommand{\sigmakm}{\sigma_{\tau}^{k-1}}
\newcommand{\ukm}{\uu_{\tau}^{k-1}}
\newcommand{\ukmm}{\uu_{\tau}^{k-2}}
\newcommand{\epsilonukm}{\varepsilon(\uu_{\tau}^{k-1})}
\newcommand{\vkm}{\vv_{\tau}^{k-1}}
\newcommand{\epsilonvkm}{\varepsilon(\vv_{\tau}^{k-1})}
\newcommand{\zkm}{z_{\tau}^{k-1}}
\newcommand{\hkm}{h(\zkm)}
\newcommand{\vl}{\vv_{\tau}^l}
\newcommand{\phij}{\varphi_{\tau}^j}
\newcommand{\muj}{\mu_{\tau}^j}
\newcommand{\sigmaj}{\sigma_{\tau}^j}
\newcommand{\epsilonuj}{\varepsilon(\uu_{\tau}^j)}
\newcommand{\vj}{\vv_{\tau}^j}
\newcommand{\zj}{z_{\tau}^j}
\newcommand{\phit}{\varphi_{\tau}}
\newcommand{\phito}{\overline{\varphi}_{\tau}}
\newcommand{\phitu}{\underline{\varphi}_{\tau}}
\newcommand{\muto}{\overline{\mu}_{\tau}}
\newcommand{\mutu}{\underline{\mu}_{\tau}}
\newcommand{\sigmat}{\sigma_{\tau}}
\newcommand{\sigmato}{\overline{\sigma}_{\tau}}
\newcommand{\sigmatu}{\underline{\sigma}_{\tau}}
\newcommand{\sigmacto}{\overline{\sigma}_{c,\tau}}
\newcommand{\sigmagto}{\overline{\sigma}_{\Gamma,\tau}}
\newcommand{\uto}{\overline{\uu}_{\tau}}
\newcommand{\utu}{\underline{\uu}_{\tau}}
\newcommand{\ut}{\uu_{\tau}}
\newcommand{\epsilonuto}{\varepsilon(\uto)}
\newcommand{\epsilonutu}{\varepsilon(\utu)}
\newcommand{\vto}{\overline{\vv}_{\tau}}
\newcommand{\vtu}{\underline{\vv}_{\tau}}
\newcommand{\vt}{\vv_{\tau}}
\newcommand{\epsilonvto}{\varepsilon(\vto)}
\newcommand{\zto}{\overline{z}_{\tau}}
\newcommand{\ztu}{\underline{z}_{\tau}}
\newcommand{\zt}{z_{\tau}}
\newcommand{\fto}{\overline{f}_{\tau}}
\newcommand{\phid}{\varphi_{\delta}}
\crefname{lemma}{lemma}{lemmas}
\Crefname{lemma}{Lemma}{Lemmas}
\crefname{prop}{proposition}{proposition}
\Crefname{prop}{Proposition}{Propositions}
\crefname{cor}{corollary}{corollaries}
\Crefname{cor}{Corollary}{Corollaries}
\crefname{remark}{remark}{remarks}
\Crefname{remark}{Remark}{Remarks}
\crefname{thm}{theorem}{theorems}
\Crefname{thm}{Theorem}{Theorems}
\Crefname{section}{Section}{Sections}
\title{A phase field model of Cahn--Hilliard type for tumour growth with mechanical effects and damage}
\author{\large{\textsc{Giulia Cavalleri}} \orcidlink{0009-0006-4154-9659}}
\affil{\normalsize{Dipartimento di matematica “F. Casorati”, Università degli Studi di Pavia,\\
via Ferrata 5, 27100 Pavia, Italy\\
E-mail: \texttt{giulia.cavalleri01@universitadipavia.it}}}
\date{}
\begin{document}

\maketitle
\let\thefootnote\relax\footnotetext{This is a post-peer-review, pre-copyedit version of an article published in \textit{Journal of Mathematical Analysis and Applications}. The final authenticated version is available in Open Access at \url{https://doi.org/10.1016/j.jmaa.2025.129627}.}

\begin{abstract}
     \noindent We introduce a new diffuse interface model for tumour growth in the presence of a nutrient, in which we take into account mechanical effects and reversible tissue damage. The highly nonlinear PDEs system mainly consists of a Cahn--Hilliard type equation that describes the phase separation process between healthy and tumour tissue coupled to a parabolic reaction-diffusion equation for the nutrient and a hyperbolic equation for the balance of forces, including inertial and viscous effects. The main novelty of this work is the introduction of cellular damage, whose evolution is ruled by a parabolic differential inclusion. In this paper, we prove a global-in-time existence result for weak solutions by passing to the limit in a time-discretised and regularised version of the system.\\
     
    \noindent \textbf{Key words:} Tumour growth, Cahn--Hilliard equation, mechanical effects, viscoelasticity, damage, existence.\\
    
    \noindent \textbf{AMS (MOS) subject classification:} 35A01, 
    35K35, 
    35K57, 
    35K92, 
    35Q74, 
    35Q92, 
    74A45. 
\end{abstract}

\section{Introduction}

Cancer is one of the leading causes of death worldwide, and understanding the primary mechanisms underlying its development is one of the main challenges scientists face nowadays. 
Genetic, biochemical, and mechanical processes come into play simultaneously, making it difficult to predict the course of the disease and design specific and effective treatments.
For this reason, it has been understood that mathematics can be fundamental, offering quantitative tools that can significantly enhance diagnostic and prognostic applications. Over the last decades, there has been an increasing interest in mathematical modelling for tumour growth, see, e.g., \cite{Byrne_etal_2006, Cristini_Lowengrub_2010,Asantin_Preziosi_2008} and the references cited therein. In particular, among the various possible modelling approaches, we will focus on the so-called phase-field or diffuse interface models. At first glance, it might seem intuitive to model solid tumours as masses separated from healthy tissue by a sharp interface, employing a free boundary problem (see \cite{Byrne_etal_97,Friedman_07}). However, these models present technical limitations in describing situations where there is a topological change in the tumour, such as coalescence or breaking-up phenomena, which typically occur both at the early stages of the proliferation (when the tumour is morphologically unstable, see \cite{Cristini_etal_03}) and at more advanced stages (when it undergoes metastasis). This difficulty can be overcome by employing a diffuse interface model, in which the sharp interface is replaced by a thin transition layer with both tumour and healthy cells. Without attempting to be exhaustive, we refer to \cite{Asantin_Preziosi_2008, Byrne_1999, Lowengrub_Frieboes_Jin_,  Colli_2015, Frigeri_Grasselli_Rocca, Garcke_Lam_Sitka, Miranville_Rocca_Schimperna} and the references cited therein. This work aims to introduce and study a new mathematical phase-field model for the evolution of a young tumour, which implies that the tumour is in the avascular phase and there is no differentiation between different types of tumour cells (viable, quiescent, and necrotic). As it is common, the tumour growth process is ruled by a Cahn-Hilliard type equation (see, e.g., \cite{Miranville_book} or \cite{Hao_review} for further details on the classical Cahn-Hilliard equation) coupled with other equations describing the behaviour of relevant quantities. We will take into account the following aspects.
\begin{enumerate}[(i)]
    \item \label{enumerate:nutrient} The presence of the \textit{nutrient}, a chemical species that feeds tumour cells (such as oxygen or glucose). In our setting, it is provided by the pre-existing vasculature, since we assume that the tumour has not developed its own yet. 
    \item  \label{enumerate:viscoelasticity} The \textit{viscoelastic} behaviour of biological tissues, which exhibit both elastic (instantaneous response to stress) and viscous (time-dependent deformation) properties. Moreover, it is well known that solid stress can affect tumour growth (see, e.g., \cite{Urcun_Lorenzo_etal_22}) and, at the same time, tumour growth increases mechanical stress. We assume infinitesimal displacements, so we will work in the case of linear elasticity.
    \item  \label{enumerate:damage} The local \textit{tissue damage} caused by surgery.  In many cases, the standard of care requires surgical resection of the tumour: this causes lesions that, in turn, affect the proliferation of tumour cells when the growth process eventually restarts. This may happen for several reasons. First, removing part of the tissue causes damage to the blood vessels and edema: this must be taken into account in the nutrient equation. Second, the surgical groove is characterised by different elastic properties compared to intact tissue (see, e.g., \cite{Moeendarbary2017}), which must be considered when choosing a suitable form for the elastic energy.
\end{enumerate}
While the influence of \ref{enumerate:nutrient} and \ref{enumerate:viscoelasticity} on tumour growth is already deeply investigated in the literature (see, e.g., \cite{Garcke_Lam_Signori_21,Garcke_Kovacs_Trautwein_22,Garcke_Trautwein_2024}), the role of \ref{enumerate:damage} is a complete novelty in this field. However, the impact of the damage and (visco-)elasticity in phase separation processes has been thoroughly explored in various modelling studies within the field of materials science (see, e.g., \cite{heinemann_kraus_2011,heinemann_kraus_2015, heinemann_kraus_rocca_rossi_2017}).\\

\noindent \textbf{The PDEs system.} Explicitly, we derive the following PDEs system
\begin{subequations}\label{eq:problem}
    \begin{align}
        & \partial_t\varphi - \Delta \mu = U(\varphi, \sigma, \varepsilon(\uu), z), \label{eq:ch1}\\
        & \mu = - \epsilon \Delta \varphi + \frac{1}{\epsilon}\Psi'(\varphi) + W_{,\varphi}(\varphi, \epsilonu, z), \label{eq:ch2}\\
        & \partial_t\sigma - \Delta \sigma = S(\varphi, \sigma, z), \label{eq:nutrient}\\
        & \kappa \partial_{tt}\uu - \diver\left[a(z)\V \varepsilon(\partial_t\uu) + W_{,\varepsilon}(\varphi, \epsilonu, z)   \right] = \mathbf{0}, \label{eq:displacement}\\
        & \partial_t z - \Delta_p z + \beta(z) + \pi(z)  + W_{,z}(\varphi, \epsilonu, z) \ni 0, \label{eq:damage}
    \end{align}
\end{subequations}
posed in $Q \coloneqq \Omega \times (0,T)$, where $\Omega$ is a smooth enough domain in $\RR^d$ with $d=2,3$ and $T>0$ is a fixed time. The Cahn--Hilliard equation given by the combination of \eqref{eq:ch1}--\eqref{eq:ch2} describes the phase separation process between healthy and tumour tissue, where $\varphi$ denotes the local difference in volume fraction between tumour and healthy cells. This means that, at least in principle, the set $\{\varphi=-1\}$ corresponds the healthy tissue, $\{\varphi=1\}$ to the tumour tissue and $\{-1<\varphi<1\}$ is the diffuse interface that separates them (see \Cref{remark:phi_physical_interval}).  The parameter $\epsilon$ represents the thickness of the interfacial layer.  The chemical potential associated to $\varphi$ is denoted by $\mu$. The reaction-diffusion equation \eqref{eq:nutrient} rules the diffusion of $\sigma$, that is, the concentration of the nutrient. The hyperbolic equation \eqref{eq:displacement} describes the dynamics for $\uu$, the small displacement field of each point with respect to the reference undeformed configuration. Here, $\varepsilon(\uu)$ is the symmetric gradient of $\uu$, i.e., $\varepsilon(\uu) = \frac{1}{2}\left(\nabla\uu + (\nabla \uu)^t\right)$. The fixed and positive parameter $\kappa$ is supposed to be small and represents the fact that tumour growth occurs at a much larger timescale than the tissue relaxation into mechanical equilibrium. For simplicity and without any loss of generality, later on, we will set $\epsilon = \kappa = 1$. Finally,  the differential inclusion \eqref{eq:damage} represents the evolution law for local tissue damage $z$, which is the main novelty we introduce. Classically, the damage takes values between $0$ and $1$: if $z(x)$ is equal to $1$, there is no damage at the point $x\in \Omega$, $z(x)$ equal to $0$ means that there is complete damage and an intermediate value indicates partial damage (see \Cref{remark:z_physical_interval}).\\

\subsection{Derivation of the model}\label{subsect:derivation_of_the_model}
The evolution of our system is driven by classical thermodynamic principles and relies on a total energy $\mathscr{E}$ and a pseudopotential of dissipation $\mathscr{P}$. The total energy of our system is
\begin{equation*}
    \mathscr{E}(\varphi, \sigma, \uu, z)=\into E(\varphi,\nabla \varphi, \sigma,\varepsilon(\uu), \partial_t \uu, z, \nabla z) \dx
\end{equation*}
where the energy density $E$ is the sum of a generalized free energy density and the kinetic energy density. We postulate it has the following form:
\begin{equation*}
   \begin{split}
       E(&\varphi,\nabla \varphi, \sigma,\varepsilon(\uu), \partial_t \uu, z, \nabla z) \\ &= \frac{1}{2}|\nabla \varphi|^2 + \Psi(\varphi)
       + \frac{1}{2} |\sigma|^2  + \frac{1}{2}|\partial_t \uu|^2 +\frac{1}{p} |\nabla z|^p + \widehat{\beta}(z)
       +\widehat{\pi}(z)  + W(\varphi,\varepsilon(\uu),z).
   \end{split}
\end{equation*}
The term $\frac{1}{2}|\nabla \varphi|^2 + \Psi(\varphi)$ is the classical contribution of Ginzburg--Landau type and accounts for the interfacial energy of the diffuse interface. The addend $\frac{1}{2}|\sigma|^2$  results from the presence of the nutrient, in the sense that higher nutrient concentration corresponds to higher energy. The system's kinetic energy is given by $\frac{1}{2}|\partial_t \uu|^2$.  Regarding the damage, $\frac{1}{p} |\nabla z|^p + \widehat{\beta}(z) +\widehat{\pi}(z)$ is an interaction free energy. According to the gradient theories in damage processes, the gradient term models the influence of damage at a material point, undamaged in the surrounding. The non-smooth convex $\widehat{\beta}$ allows us to impose physical constraints on the variable $z$ (such as requiring that $z \in [0,1]$), while $\widehat{\pi}$ is a smooth perturbation with at most quadratic growth. Lastly,  as already anticipated, $W$ is the elastic energy density. To include dissipation in our model, we define a pseudo-potential of dissipation 
\begin{equation*}
    \mathscr{P}(\varepsilon(\partial_t \uu), \partial_t z) = \into P(\varepsilon(\partial_t \uu), \partial_t z) \dx,
\end{equation*}
where 
\begin{equation*}
    P(\varepsilon(\partial_t \uu), \partial_t z) = \frac{1}{2} a(z)\epsilonut : \V \epsilonut +  \frac{1}{2}|\partial_t z|^2.
\end{equation*}
It depends on the damage time derivative $\partial_t z$ and the macroscopic symmetric strain rate $\epsilonut$, which are the dissipative variables of our problem. The fourth-order viscous tensor term $\V$ represents the friction between adjacent moving cells with different velocities.
Notice that $P$ depends also on the damage $z$,  but we use the notation $P$ instead of a more precise $P_z$ for brevity. 
\noindent Following Gurtin's approach \cite{Gurtin_1996}, our system can be derived starting from balance laws for the involved quantities and then imposing constitutive assumptions so that the system satisfies the second law of thermodynamics, which, in the case of an isothermal system like ours, is written in the form of an energy dissipation inequality (see, e.g., \cite{Garcke_Lam_Sitka, heinemann_kraus_rocca_rossi_2017}).\\
The Cahn--Hilliard equation of the system \eqref{eq:ch1}--\eqref{eq:ch2} is derived from the mass balance law
\begin{equation*}
    \partial_t \varphi + \diver \mathbf{J_\varphi} = U,
\end{equation*}
where $\mathbf{J_\varphi}$ is the mass flux and $U$ is a mass source. As usual, the mass flux is prescribed by the following constitutive equation
\begin{equation*}
    \mathbf{J_{\varphi}} \coloneqq - \nabla \mu,
\end{equation*}
where $\mu$ is the chemical potential associated with $\varphi$ and it is defined as the variational derivative of the energy with respect to $\varphi$, i.e., 
\begin{equation*}
    \mu \coloneqq \frac{\delta \mathscr{E}}{\delta \varphi}= -\Delta \varphi + \Psi'(\varphi)+ W_{,\varphi}(\varphi,\epsilonu,z).
\end{equation*}
Here we adopt the standard notation according to which $W_{,\varphi}$ is the derivative of $W$ with respect to $\varphi$ and the same for the other variables.\\
The \cref{eq:nutrient} for the evolution of the nutrient is also derived from a mass balance law,
\begin{equation*}
    \partial_t \sigma + \diver \mathbf{J_{\sigma}} = S,
\end{equation*}
where $S$ is a source/sink of nutrients and the mass flux  is chosen as
\begin{equation*}
    \mathbf{J_{\sigma}} \coloneqq -\nabla \left[\partial_\sigma E\right] =- \nabla \sigma.
\end{equation*}
The equation \eqref{eq:displacement} governing the displacement is a balance law for macroscopic movements in which inertial effects are taken into account and external forces are neglected, derived from the principle of virtual power
\begin{equation*}
    \partial_{tt}\uu - \diver \T = 0.
\end{equation*}
Here $\T$ is the stress tensor and we postulate it is the sum of a non-dissipative (elastic stress) and a dissipative part (viscous stress) given by
\begin{equation*}
    \T = \T^{\text{nd}} + \T^{\text{d}}= \partial_{\epsilonu} E+ \partial_{\epsilonut} P = W_{,\varepsilon}(\varphi, \epsilonu, z)+a(z)\V \varepsilon(\partial_t\uu).
\end{equation*}
Finally, the damage differential inclusion \eqref{eq:damage} is derived from the micro-force balance law
\begin{equation*}
    B - \diver  \mathbf{H} = 0,
\end{equation*}
where we assume that the sum of the external micro-forces acting on the body is equal to zero. The quantity $B$ represents the internal micro-forces and is defined by the following  constitutive assumption
\begin{align*}
    B=B^{\text{nd}}+B^{\text{d}}\qquad 
    \text{with} \qquad
    &B^{\text{nd}} \in \partial_{z}E= \partial \widehat{\beta}(z)+ \widehat{\pi}'(z)+W_{,z}(\varphi,\epsilonu,z),\\ 
     &B^{\text{d}} = \partial_{\partial_t z}P=\partial_t z.
     \end{align*}
Without entering into the mathematical details, $\partial_{z}E$ and $\partial \widehat{\beta}$ have to be interpreted as subdifferentials in the sense of convex analysis and this justifies the presence of the belonging symbol instead of equality. In the following, we will employ the notation $\beta \coloneqq \partial \widehat{\beta}$ and $\pi \coloneqq \widehat{\pi}'$. The term $\mathbf{H}$ is the internal micro-stress and is defined by
 \begin{align*}
    \mathbf{H} = \mathbf{H^{\text{nd}}} + \mathbf{H^{\text{d}}} \qquad 
    \text{with}\qquad &\mathbf{H}^{\text{nd}}=\partial_{\nabla z}E=|\nabla z|^{p-2}\nabla z,\\
    &\mathbf{H}^{\text{d}}=\partial_{\nabla (\partial_t z)}P=0.
\end{align*}
Notice that here we followed Frémond's approach (see \cite{Fremond_2002}), assuming that $\T$, $B$ and $\mathbf{H}$ can be additively decomposed in a dissipative and a non-dissipative part.\\ 

\noindent \textbf{Boundary and initial conditions.} We assume the system is isolated from the exterior, so we prescribe no-flux conditions for $\varphi$, $\mu$, and $z$. Regarding $\sigma$, we allow a more general Robin condition that may model also the boundary supply of the nutrient. We assume that $\uu$ is zero at the boundary, as in the situation in which the domain is delimited by a rigid part of the body (e.g., a bone) that prevents displacements. Namely, we couple the previous system \eqref{eq:problem} with the following boundary conditions
\begin{subequations}\label{eq:boundary_conditions}
    \begin{align}
        & \nabla\varphi \cdot \nnu = \nabla \mu \cdot \nnu = 0,\label{eq:phi,mu_nu}\\
        & \nabla\sigma \cdot \nnu + \alpha(\sigma-\sigma_{\Gamma}) = 0,\label{eq:sigma_nu}\\
        & \uu = \mathbf{0},\label{eq:u_nu}\\
        & (|\nabla z|^{p-2}\nabla z) \cdot \nnu = 0,\label{eq:z_nu}
    \end{align}
\end{subequations}
 on $\Sigma \coloneqq \Gamma \times (0,T)$, where $\Gamma \coloneqq \partial \Omega$ is the boundary of the domain and $\nnu$ is the outward unit normal to $\Gamma$. The term $\sigma_{\Gamma}$ is the prescribed concentration of the nutrient at the boundary, and $\alpha$ is a given non-negative constant. Notice that, if $\alpha = 0$, we gain a no-flux condition also for the nutrient. 
 The system is supplemented with the initial conditions 
\begin{equation}\label{eq:initial_conditions}
    \varphi(0) = \varphi_0,\quad
    \sigma(0) = \sigma_0, \quad
    \uu(0) = \uu_0, \quad
    \partial_t\uu(0)=\vv_0, \quad
    z(0) = z_0,
\end{equation}
in $\Omega$.\\

\noindent \textbf{Choice of the sources.} The nonlinear source $U$ in equation \eqref{eq:ch1} accounts for biological mechanisms related to tumour cells proliferation and death. Explicitly, we make the following choice
\begin{equation}\label{eq:defn_U}
   U(\varphi, \sigma, \varepsilon(\uu), z) \coloneqq \bigg(\frac{\lambda_p \sigma}{1+ |W_{,\varepsilon}(\varphi,\varepsilon(\uu),z)|} - \lambda_a + f\bigg)g(\varphi, z),
\end{equation}
referring to \cite{Garcke_Lam_2017,Garcke_Lam_Signori_21}.
As it is common, we assume the mechanisms controlling cell division to be suppressed in tumour cells, so proliferation is limited only by the availability of nutrients. We model it with the term $\lambda_p \sigma$, where $\lambda_p$ is a fixed proliferation coefficient. We also suppose that tumour cells only die because of apoptosis, and we denote with $\lambda_a$ the constant apoptosis rate. Furthermore, we consider the presence of mechanical stress caused by surrounding tissues as a factor that can reduce tumour growth. This is expressed by the fact that, if the mechanical stress $W_{,\varepsilon}$ grows in modulus, the proliferation term $\lambda_p \sigma$ reduces. We also allow the presence of a medical treatment, modelled by the prescribed function $f$, that affects proliferation. The function $g$ guarantees that proliferation and apoptosis occur only in the tumour tissue, as well as the effectiveness of the medical care $f$. A good modelling choice is a non-negative function that vanishes in $\{\varphi = -1\}$, is equal to 1 where $\{\varphi = 1\}$ and is increasing in the variable $\varphi$. We also allow the dependence of $g$ on the damage $z$.\\
For the choice of the nutrient source $S$ in \cref{eq:nutrient}, we refer to the aforementioned literature, assuming
\begin{equation}\label{eq:defn_S}
    S(\varphi, \sigma, z) \coloneqq -\lambda_c\sigma g(\varphi,z) + \Lambdac(z) (\sigma_c - \sigma). 
\end{equation}
The term $-\lambda_c\sigma g(\varphi,z)$ models the fact that the nutrient consumption is higher where the tumour cells density is higher. Here, $\lambda_c$ is a fixed consumption rate. The term $\Lambdac(z)(\sigma_c -\sigma)$ is a supply term that takes into account the nutrients provided by the nearby capillaries. Note that the capillary supply rate $\Lambdac$ may depend on the local damage $z$ since the damage, in the sense of a lesion caused by a surgical procedure, affects the blood vessels.\\

\noindent \textbf{Choice of the elastic energy density.} Accordingly to the classical theory of linear elasticity (see, e.g., \cite{Slaughter_book_2002}) and to the previous literature (see, e.g., \cite{Garcke_Lam_Signori_21, Garcke_Kovacs_Trautwein_22}), we assume that the elastic energy density has the following expression
\begin{equation}\label{eq:elastic_energy_expression}
    W(\varphi, \epsilonu, z) \coloneqq W(x,\varphi, \epsilonu, z) = \frac{1}{2}h(z)\C(x)(\epsilonu-\R\varphi): (\epsilonu-\R \varphi).
\end{equation}
Notice that, even if $W$ may depend on the space variable $x$, with a slight abuse of notation, we will omit this dependence in the following. Here, $\mathcal{C}$ is a fourth-order elasticity tensor whose mathematical requirements will be specified later on. We include the multiplicative, non-negative, and possibly degenerate function $h$ to add dependence on the damage. Notice that, from the modelling point of view, $\C$ should also depend on the phase $\varphi$ because tumour tissue and healthy tissue could have a different elastic response to solicitations. However, we weren't able to handle such dependence from the mathematical point of view (see \Cref{subsect:derivation_of_the_model} below). Finally, the term $\R \varphi$ is the stress-free strain (also called eigenstrain), which is the strain the material would attain if the tissue were uniform and unstressed at a phase configuration $\varphi$. In other words, it is the strain due to growth. As it is common, we assume that it satisfies Vegard's law, i.e., it is given by a linear function of $\varphi$, where $\R \in \RR^{d \times d}$ is a fixed matrix. With such a choice, the partial derivatives of $W$ that appear in the equations of the PDEs system are:
\begin{alignat}{2}
    &W_{,\varphi}(\varphi,\epsilonu,z) &&= -h(z)\C(\epsilonu-\R \varphi) : \R, \label{eq:W_deriv_phi}\\ 
    &W_{,\varepsilon}(\varphi,\epsilonu,z) &&=h(z)\C(\epsilonu-\R \varphi),\label{eq:W_deriv_e}\\
    &W_{,z}(\varphi,\epsilonu,z) &&= \frac{1}{2}h'(z)\C(\epsilonu-\R\varphi) : (\epsilonu-\R\varphi)\label{eq:W_deriv_z}.
\end{alignat}

\subsection{Aim of the paper}\label{subsect:aim_of_the_paper}
The purpose of this work is to prove the existence of weak solutions to the problem \eqref{eq:problem}--\eqref{eq:initial_conditions}. To do so, we will introduce an appropriate time-discretised and regularised version of our system. Then, we will show that the discrete problem is well-posed and that its solution satisfies some a priori estimates. Finally, employing compactness results, we will pass to the limit as the time-step tends to $0$ and prove that the limit we find solves the original PDEs system.\\

\noindent \textbf{Mathematical difficulties.} The main mathematical challenges that we faced are the following. 
\begin{itemize}
       \item The presence of the mass source in the Cahn--Hilliard equation \eqref{eq:ch1}--\eqref{eq:ch2}, which implies that there is no mass conservation, i.e., the mean value of $\varphi$ is not constant. This is expected from the modelling point of view, however, it requires being able to handle the term 
       \begin{equation*}
           \into U(\varphi,\sigma,\epsilonu,z) \mu \dx
       \end{equation*}
       in the energy estimate (see the proof of \Cref{prop:a_priori_estimate}).
        \item The nonlinear coupling between the single equations. In particular, in the damage \cref{eq:damage}, the term
        \begin{equation*}
            W_{,z}(\varphi,\varepsilon(\uu),z)  =\frac{1}{2} h'(z) \C(\varepsilon(\uu)-\R \varphi):(\varepsilon(\uu)-\R \varphi)
        \end{equation*}
        is quadratic in $\varepsilon(\uu)$. In order to pass to the limit in this term from the discrete to the continuous problem, we have to perform a suitable regularity estimate for the displacement $\uu$ to obtain strong convergence for $\varepsilon(\uu)$. This estimate, in turn, requires a $L^{\infty}(0,T;W^{1,p})$ uniform bound for the damage $z$, with $p>d$: although the $p-$Laplacian operator in \cref{eq:damage} is a nonlinear operator which complicates the analysis, it has a fundamental regularising role. For the same reason, since we do not have uniform estimates for $\varphi$ in equally strong spaces, we cannot allow a dependence of the elasticity tensor on the phase. In the literature (see, e.g., \cite{heinemann_kraus_rocca_rossi_2017}) this issue has been addressed by putting a $\Delta_p \varphi$ instead of $\Delta\varphi$ in \cref{eq:ch2}. However, we will not follow this strategy here.
        \item The damage equation is highly nonlinear due to the presence of $-\Delta_p z$ and the subdifferential $\beta=\partial \widehat{\beta}(z)$. In particular, the $ p-$Laplacian operator seems to affect the possibility of gaining uniqueness due to its degenerate character. As already pointed out in \cite{Rocca_Rossi_2014} for a similar equation, this difficulty may be overcome by replacing the degenerate $p-$Laplacian operator $-\diver(|\nabla z|^{p-2} \nabla z)$ with the non-degenerate one $-\diver((1+|\nabla z|^2)^{\frac{p-2}{2}}\nabla z)$ or with the fractional $s$-Laplacian (see, e.g., \cite[][p. 1282]{Rocca_Rossi_2014} for a definition). However, we do not include such analysis in this paper and uniqueness remains an open problem. 
    \end{itemize}

\noindent \textbf{Plan of the paper.} The paper is organized as follows. In \Cref{sect:main_result}, after introducing some notation and preliminary results, we list the hypotheses under which we work. Then we state the weak formulation of our problem and our main result, i.e., \Cref{thm:existence}. \Cref{sect:proof} is completely devoted to the proof of the existence result.

\section{Main result}\label{sect:main_result}
\subsection{Notation and preliminaries}
\textbf{Notation.} In what follows, for any real Banach space $X$ with dual space $X'$, we indicate its norm as $\norm{\cdot}_{X}$ and the dual pairing between $X$ and $X'$ as $\duality{\cdot}{\cdot}_{X}$. We denote the Lebesgue and Sobolev spaces over $\Omega$ as $L^p \coloneqq L^p(\Omega)$, $W^{k,p} \coloneqq W^{k,p}(\Omega)$ and $H^k \coloneqq W^{k,2}(\Omega)$. We use $H^1_0$ to denote the functions of  $H^1$ that have zero trace at the boundary. We employ the notation $L^p_{\Gamma} \coloneqq L^p(\Gamma)$ for the Lebesgue spaces over $\Gamma$ and $\mathcal{H}^{d-1}$ for the $(d-1)$-dimensional Hausdorff measure. Moreover, to keep the notation as simple as possible, we will often not distinguish between scalar, vector, and matrix-valued spaces (for example, we will use $L^p$ instead of $L^p(\Omega)$ but also $L^p(\Omega;\RR^d)$). However, we will use bold font for vectors and calligraphic font for tensors. For the sake of brevity, the norm of the Bochner space $W^{k,p}(0,T;X)$ is indicated as $\norm{\cdot}_{W^{k,p}(X)}$, omitting the time interval $(0,T)$. Sometimes, for $p \in [1,+\infty)$, we will identify $L^p(Q)$ with $L^p(0,T;L^p)$. With the notation $C^0([0,T];X)$ we mean the space of continuous $X$-valued 
functions, while with $C_{\text{w}}^0([0,T];X)$ we mean the space of weakly continuous $X$-valued functions. Regarding the constants, as is common, we will use the notation $C$ to indicate a constant that depends only on the assigned data of the problem and whose value might change from line to line. If we want to highlight the dependency on a certain parameter, we put it as a subscript (e.g., $C_{\tau}$ indicates a constant that depends on $\tau$, $C_0$ a constant that depends on the initial data, and so on). \\

\noindent \textbf{Useful inequalities.} We will make use of classical inequalities such as H\"older, Young, Poincaré, and Poincaré--Wirtinger. For convenience, we recall a special case of the Gagliardo--Nirenberg interpolation inequality (see, e.g., \cite{Nirenberg_59}). 

\begin{thm}[Gagliardo--Nirenberg inequality]\label{thm:Gagliardo-Nieremberg_ineq}
    Let $\Omega \subseteq \RR^d$ be a Lipschitz bounded domain. Given 
    \begin{equation*}
        r > q \geq 1, \qquad s > d\left(\frac{1}{2}-\frac{1}{r}\right), \qquad \frac{1}{r} = \frac{\alpha}{q}+(1-\alpha)\left(\frac{1}{2}-\frac{s}{d}\right),
    \end{equation*}
    there exists a constant $C$ such as for every $v \in H^s$, the following inequality holds true:
    \begin{equation*}
        \norm{v}_{L^r} \leq C \norm{v}_{L^q}^{\alpha}\norm{v}_{H^s}^{1-\alpha}.
    \end{equation*}
\end{thm}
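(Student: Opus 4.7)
The plan is to reduce the inequality on the bounded Lipschitz domain $\Omega$ to the classical Gagliardo--Nirenberg estimate on the whole space $\Rd$, and then to derive the latter by combining a Sobolev embedding with an elementary $L^p$-interpolation.

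First, I would exploit the Lipschitz regularity of $\partial \Omega$ to invoke a simultaneous extension operator $E : H^s(\Omega) \cap L^q(\Omega) \to H^s(\Rd) \cap L^q(\Rd)$ that is bounded on both scales (a Stein total extension when $s \in \NN$, or a Calderón/Rychkov-type construction for fractional $s$), so that it is enough to prove the inequality for $v \in H^s(\Rd) \cap L^q(\Rd)$. Next, I would observe that the hypothesis $s > d(1/2 - 1/r)$ together with the identity $1/r = \alpha/q + (1-\alpha)(1/2 - s/d)$ forces $r$ to be strictly subcritical relative to the Sobolev exponent $r^\ast$, defined by $1/r^\ast = 1/2 - s/d$ when $0 < s < d/2$ and chosen arbitrarily large (but finite) when $s \geq d/2$. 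The Sobolev embedding on $\Rd$ then provides the continuous inclusion $H^s(\Rd) \hookrightarrow L^{r^\ast}(\Rd)$, which is most naturally obtained via the Fourier characterisation of $H^s$ together with the mapping properties of Bessel potentials on $L^p$ (for integer $s$ one can alternatively iterate the classical $W^{1,p}\hookrightarrow L^{p^\ast}$ embedding).

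Finally, the interpolation identity can be rewritten as $1/r = \alpha/q + (1-\alpha)/r^\ast$ with $\alpha \in [0,1]$, so Hölder's inequality applied to the splitting $|v|^r = |v|^{\alpha r}\,|v|^{(1-\alpha)r}$ with conjugate exponents $q/(\alpha r)$ and $r^\ast/((1-\alpha)r)$ yields the log-convexity estimate
\begin{equation*}
    \norm{v}_{L^r} \leq \norm{v}_{L^q}^{\alpha} \norm{v}_{L^{r^\ast}}^{1-\alpha},
\end{equation*}
and combining with the embedding above gives the desired bound. The main obstacle is the clean treatment of the critical Sobolev index: when $s = d/2$ one has $H^s \not\hookrightarrow L^\infty$, so the formal choice $r^\ast = \infty$ fails and one must pick a finite $r^\ast > r$ (e.g.\ $r^\ast = 2r$), for which the embedding still holds; the case $s > d/2$ is analogous and slightly easier. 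The existence of the extension operator for fractional $s$ on Lipschitz domains is also a non-trivial ingredient, but it is completely standard in the literature on Sobolev spaces.
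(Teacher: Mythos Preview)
The paper does not prove this theorem; it is quoted as a classical result with a reference to Nirenberg. Your reduction to $\Rd$ via a Stein/Rychkov extension, followed by the Sobolev embedding $H^s\hookrightarrow L^{r^\ast}$ combined with log-convexity of the $L^p$-scale, is the standard route and is fully correct in the subcritical regime $s<d/2$: there $1/r^\ast=1/2-s/d$ is positive and the H\"older interpolation recovers exactly the exponent $\alpha$ of the statement.

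Your handling of the range $s\geq d/2$, however, does not deliver the claimed exponent. Replacing the (formally infinite or negative) critical index by any admissible finite $r^\ast$, the H\"older step produces
\[
\alpha'=\frac{1/r-1/r^\ast}{1/q-1/r^\ast},
\]
and one checks that $\alpha'<q/r\leq\alpha$ whenever $s\geq d/2$, with equality $q/r=\alpha$ only at $s=d/2$; even taking $r^\ast=\infty$ when $s>d/2$ gives merely $\alpha'=q/r<\alpha$. On a bounded domain the inequality with the smaller $\alpha'$ is strictly weaker, because the trivial bound $\norm{v}_{L^q}\leq C\norm{v}_{H^s}$ only lets you \emph{decrease} the exponent on $\norm{v}_{L^q}$, not increase it. Concretely, for $d=2$, $s=1$, $q=2$, $r=4$ the statement is the Ladyzhenskaya inequality $\norm{v}_{L^4}\leq C\norm{v}_{L^2}^{1/2}\norm{v}_{H^1}^{1/2}$, whereas your scheme with $r^\ast=8$ yields only $\norm{v}_{L^4}\leq C\norm{v}_{L^2}^{1/3}\norm{v}_{H^1}^{2/3}$. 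This is not an idle case: the paper invokes the theorem with $s=1$ and $d\in\{2,3\}$, and its time--space embedding $L^\infty(0,T;L^2)\cap L^2(0,T;H^1)\hookrightarrow L^{2p/d}(0,T;L^{2p/(p-2)})$ needs precisely the sharp $\alpha$ when $d=2$. To close the gap you need a genuinely different tool at the endpoint, for instance Nirenberg's original one-dimensional iteration, or the embedding $H^{d/2}\hookrightarrow\mathrm{BMO}$ together with $\norm{v}_{L^r}\leq C\norm{v}_{L^q}^{q/r}\norm{v}_{\mathrm{BMO}}^{1-q/r}$, which does recover the correct exponent.
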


\noindent Another inequality we will employ is the following Ehrling's Lemma, also known as Aubin--Lions inequality (see \cite[][Theorem 16.4, p. 102]{Lions_Magenes_12}).

\begin{thm}[Ehrling's lemma]\label{thm:ehrling_lemma}
    Let $(X,\norm{\cdot}_X)$, $(Y,\norm{\cdot}_Y)$ and $(Z,\norm{\cdot}_Z)$ be Banach spaces with $X$ compactly embedded in $Y$ and $Y$ continuously embedded in $Z$.
    Then, for every $\varepsilon > 0$ there exists a $C(\varepsilon) > 0$ such that
    \begin{equation*}
        \norm{x}_{Y} \leq \varepsilon \norm{x}_{X} +C(\varepsilon)\norm{x}_{Z}
    \end{equation*}
    for every $x \in X$.
\end{thm}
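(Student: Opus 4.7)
The plan is to proceed by contradiction, exploiting the compact embedding $X \hookrightarrow Y$ to extract a convergent subsequence and derive a contradiction from the additional information provided by the $Z$-norm. Suppose the conclusion fails: there exist $\varepsilon_0 > 0$ and, for every $n \in \NN$, an element $x_n \in X$ such that
\begin{equation*}
    \norm{x_n}_Y > \varepsilon_0 \norm{x_n}_X + n \norm{x_n}_Z.
\end{equation*}
In particular $x_n \neq 0$. Since both sides are positively homogeneous of degree one, after replacing $x_n$ by $x_n / \norm{x_n}_Y$ I may assume $\norm{x_n}_Y = 1$ for every $n$. The failed inequality then reads $1 > \varepsilon_0 \norm{x_n}_X + n \norm{x_n}_Z$, which yields the two crucial pieces of information
\begin{equation*}
    \norm{x_n}_X \leq 1/\varepsilon_0, \qquad \norm{x_n}_Z \leq 1/n.
\end{equation*}
Hence $\{x_n\}$ is bounded in $X$ and tends to $0$ strongly in $Z$.

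At this point I invoke the embedding hypotheses. Since $X$ is compactly embedded in $Y$ and $\{x_n\}$ is bounded in $X$, I can extract a (non-relabeled) subsequence converging strongly in $Y$ to some $\bar{x} \in Y$. By continuity of the norm, $\norm{\bar{x}}_Y = 1$. Using next the continuous embedding $Y \hookrightarrow Z$, the same convergence is transferred to $Z$, so that $x_n \to \bar{x}$ in $Z$. Combined with $x_n \to 0$ in $Z$ (from the bound above) and the injectivity of the embedding $Y \hookrightarrow Z$, this forces $\bar{x} = 0$ in $Y$, contradicting $\norm{\bar{x}}_Y = 1$.

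There is no genuine obstacle; the only point worth highlighting is not a difficulty but a convention, namely that \emph{continuously embedded} is understood in the standard functional-analytic sense of an injective continuous linear map $Y \to Z$. This injectivity is exactly what allows me to lift the identification $\bar{x} = 0$ from $Z$ back to $Y$ in the last step. Apart from this small remark, the argument is the textbook contradiction-plus-compactness scheme, with no further technical ingredient needed.
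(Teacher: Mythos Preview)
Your proof is correct and is the standard contradiction-plus-compactness argument for Ehrling's lemma. The paper itself does not prove this statement at all; it merely cites \cite[Theorem 16.4, p.~102]{Lions_Magenes_12} as a reference, so there is nothing to compare against beyond noting that your proof is exactly the textbook one.
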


\noindent Finally, a key part of our main result's proof is based on an estimate we obtain through a discrete version of the well-known Gronwall inequality. For the sake of completeness, we include its statement.

\begin{lemma}[Discrete Gronwall inequality]\label{lemma:discrete_gronwall}
    Let $\{x_n\}_{n\in\NN}$  be a  real sequence satisfying
    \begin{equation*}
        x_n \leq \theta+\sum_{k=0}^{n-1}\gamma_k x_k \qquad \forall n \in \NN
    \end{equation*}
    for a constant $\theta$ and a  non-negative sequence $\{\gamma_n\}_{n \in \NN}$. Then, it holds
    \begin{equation*}
        x_n \leq \theta \exp\bigg({\sum_{k=0}^{n-1} \gamma_k}\bigg)
    \end{equation*}
    for every $n \in \NN$.
\end{lemma}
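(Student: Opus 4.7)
The plan is to argue by induction on $n$. For $n=0$ the sum on the right-hand side of the hypothesis is empty, so $x_0 \leq \theta = \theta \exp(0)$, which matches the claimed bound. Setting $S_k \coloneqq \sum_{j=0}^{k-1}\gamma_j$ so that $S_{k+1}-S_k = \gamma_k \geq 0$, I would then assume inductively $x_k \leq \theta e^{S_k}$ for every $k<n$ and substitute into the hypothesis (the direction of the inequality is preserved because $\gamma_k \geq 0$) to obtain
\begin{equation*}
    x_n \leq \theta + \theta\sum_{k=0}^{n-1}\gamma_k e^{S_k}.
\end{equation*}

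The crux is to telescope the sum on the right-hand side, using implicitly that $\theta \geq 0$ (which is the intended case in all applications of the lemma). Since $t\mapsto e^t$ is non-decreasing, on each interval $[S_k,S_{k+1}]$ one has $e^t \geq e^{S_k}$, and hence
\begin{equation*}
    \gamma_k e^{S_k} = \int_{S_k}^{S_{k+1}} e^{S_k}\dt \leq \int_{S_k}^{S_{k+1}} e^t \dt = e^{S_{k+1}} - e^{S_k}.
\end{equation*}
Summing this inequality for $k=0,\dots,n-1$ telescopes to $\sum_{k=0}^{n-1}\gamma_k e^{S_k} \leq e^{S_n}-1$, so that $x_n \leq \theta + \theta(e^{S_n}-1) = \theta e^{S_n}$, which closes the induction.

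The argument is classical and essentially routine, so no real obstacle is anticipated. The only slightly delicate point is the telescoping bound $\gamma_k e^{S_k}\leq e^{S_{k+1}}-e^{S_k}$, which is the discrete counterpart of the identity $\int_0^T \gamma(s)\exp\bigl(\int_0^s \gamma(r)\dd r\bigr)\ds = \exp\bigl(\int_0^T \gamma(r)\dd r\bigr)-1$ that one would use to integrate the continuous Gronwall differential inequality; without it one would only recover the cruder product form $x_n \leq \theta\prod_{k=0}^{n-1}(1+\gamma_k)$, which is still sufficient for many applications after the estimate $1+\gamma_k \leq e^{\gamma_k}$, but not the sharp statement of the lemma.
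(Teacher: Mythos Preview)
Your proof is correct. The paper does not actually give a proof: it simply cites Clark (1987) for the product form $x_n \leq \theta\prod_{k=0}^{n-1}(1+\gamma_k)$ and then appends the observation $1+\gamma \leq e^{\gamma}$ to reach the exponential bound. Your route is different in that it bypasses the product form entirely, proving the exponential bound directly via the telescoping estimate $\gamma_k e^{S_k}\leq e^{S_{k+1}}-e^{S_k}$; this makes the argument fully self-contained. Both routes, as you correctly flag, tacitly require $\theta\geq 0$ (indeed the lemma as stated is false for $\theta<0$: take $\theta=-1$, $\gamma_0=1$, $x_0=-1$, $x_1=-2$, then $x_1\not\leq -e$), and this is harmless since every application in the paper has $\theta=C_0\geq 0$. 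One small wording point: the product bound is actually \emph{tighter} than the exponential one, not ``cruder''---what you presumably mean is that it is not the form stated in the lemma, so the final step $1+\gamma_k\leq e^{\gamma_k}$ is still needed to match the conclusion.
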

\noindent A proof can be found in \cite{Clark_87}.\\

\noindent \textbf{Preliminary on mathematical visco-elasticity.} 
Let $\C=(c_{hijk})$ be a fourth-order tensor such that:
    \begin{enumerate}[(i)]
        \item $\C$ is symmetric, i.e., 
        \begin{equation}\label{cond_sym}
            c_{hijk}(x) = c_{ihjk}(x) = c_{jkhi}(x)
        \end{equation}
        for a.e. $x \in \Omega$ and for each index $i,j,k,h = 1, \dots, d$.
        \item $\C$ satisfies the strong ellipticity condition, i.e., there exists a positive constant $C$ such that for all $D \in \RR^{d \times d}_{\text{sym}}$ and for a.e. $x \in \Omega$
        \begin{align} \label{cond_ellipt}
        \C(x) D : D \geq C |D|^2,
        \end{align} 
        where $:$ denotes the standard Frobenius inner product between matrices.
    \end{enumerate}
As will be specified in \Cref{subsect:hyp}, both the elasticity tensor $\C$ and the viscosity tensor $\V$ in \cref{eq:displacement} satisfy \eqref{cond_sym} and \eqref{cond_ellipt}. 
Moreover, the following regularity result holds true.
\begin{lemma} \label{lemma:H2_inequality}
    Let $\Omega$ be a $C^2$ domain in $\Rd$ and  $\C=(c_{hijk}) \in W^{1,\infty}(\Omega;\RR^{d \times d \times d \times d})$ be a symmetric and strongly elliptic fourth-order tensor. 
    Then, there exist $C_1, C_2 > 0$ such that for every $\uu$ in $H^2$  with $\uu = \mathbf{0}$ on $\partial \Omega$ 
    \begin{equation*}
        C_1 \norm{\uu}_{H^2} \leq \norm{\diver[\C \epsilonu]}_{L^2} \leq C_2 \norm{\uu}_{H^2}.
    \end{equation*}
\end{lemma}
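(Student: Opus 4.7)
The plan is to prove the two inequalities separately, with the upper bound being essentially a direct computation and the lower bound following from classical elliptic regularity for linear elasticity systems.

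For the upper bound, I would expand $\diver[\C\,\epsilonu]$ componentwise using the product rule. Since $\C \in W^{1,\infty}$ and $\varepsilon(\uu) \in H^1$ (because $\uu \in H^2$), one gets schematically
\begin{equation*}
\diver[\C\,\epsilonu] = (\nabla\C)\,\epsilonu + \C\,\diver[\epsilonu],
\end{equation*}
so that
\begin{equation*}
\norm{\diver[\C\,\epsilonu]}_{L^2} \leq \norm{\nabla\C}_{L^\infty}\norm{\epsilonu}_{L^2} + \norm{\C}_{L^\infty}\norm{\nabla\epsilonu}_{L^2} \leq C_2 \norm{\uu}_{H^2},
\end{equation*}
which yields the right-hand inequality with no boundary condition on $\uu$ needed.

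For the lower bound, I would set $\boldsymbol{f}\coloneqq -\diver[\C\,\epsilonu] \in L^2$ and argue in two steps. First, an $H^1$ estimate: testing the equation $-\diver[\C\,\epsilonu]=\boldsymbol{f}$ against $\uu \in H^1_0$, integrating by parts and invoking the strong ellipticity \eqref{cond_ellipt} together with Korn's inequality for functions vanishing at the boundary and the Poincaré inequality, one obtains
\begin{equation*}
\norm{\uu}_{H^1}^2 \leq C\int_{\Omega}\C\,\epsilonu:\epsilonu\dx = C\int_{\Omega}\boldsymbol{f}\cdot\uu\dx \leq C\norm{\boldsymbol{f}}_{L^2}\norm{\uu}_{H^1},
\end{equation*}
so $\norm{\uu}_{H^1} \leq C\norm{\boldsymbol{f}}_{L^2}$.

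Second, an $H^2$ regularity step: the symmetry assumption \eqref{cond_sym} combined with \eqref{cond_ellipt} implies the Legendre--Hadamard condition for the system $-\diver[\C\,\varepsilon(\cdot)]$, so this is a second-order linear elliptic system in the Agmon--Douglis--Nirenberg sense; the Dirichlet boundary condition $\uu=\mathbf{0}$ on $\partial\Omega$ satisfies the complementing condition. Since $\Omega$ is $C^2$ and $\C \in W^{1,\infty}$, the classical elliptic regularity theorem for systems (see e.g.\ the results of Agmon--Douglis--Nirenberg, or the presentation in Valent's or Ciarlet's monograph on linear elasticity) gives
\begin{equation*}
\norm{\uu}_{H^2} \leq C\bigl(\norm{\boldsymbol{f}}_{L^2} + \norm{\uu}_{L^2}\bigr).
\end{equation*}
Combining this with the $H^1$ bound from the first step yields $\norm{\uu}_{H^2} \leq C\norm{\diver[\C\,\epsilonu]}_{L^2}$, which is the desired lower bound.

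The main obstacle is the $H^2$ regularity step, since it genuinely requires invoking the full strength of elliptic regularity for systems with non-constant $W^{1,\infty}$ coefficients on a $C^2$ domain; all other ingredients (Korn, Poincaré, the product rule) are routine. In a fully detailed write-up I would either cite a standard reference for the elasticity operator with Lipschitz coefficients, or (if one prefers a self-contained argument) prove interior regularity via difference quotients and boundary regularity by flattening the boundary with a $C^2$ diffeomorphism, reducing to the half-space case where the tangential difference-quotient argument together with the equation itself provides control of all second derivatives.
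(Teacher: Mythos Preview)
Your proposal is correct. Note that the paper does not actually give its own proof of this lemma: it simply cites \cite[Proposition 1.5, p.~318]{marsden1994} and \cite[Lemma 3.2, p.~263]{necas2012} for the details. Your argument---the trivial upper bound via the product rule, and the lower bound via Korn plus the Agmon--Douglis--Nirenberg elliptic regularity for the linear elasticity system with Lipschitz coefficients on a $C^2$ domain---is exactly the standard route taken in those references, so there is nothing to compare.
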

\noindent For more details, cf. \cite[][Proposition 1.5, p. 318]{marsden1994} and \cite[][Lemma 3.2., p. 263]{necas2012}.\\

\noindent \textbf{Subdifferentials of convex functions.} Here, we will introduce some notation and recall some facts about a class of maximal monotone operators. Given a proper, convex, and lower semicontinuous function $\phi : \RR \to (-\infty,+\infty]$, its realisation in $L^2(\Omega)$ is a proper, convex, and lower semicontinuous function $\Phi: L^2(\Omega) \to (-\infty,+\infty]$ defined naturally as
\begin{equation*}
     \Phi(v) \coloneqq \begin{cases}
    \into \phi(v) \dx  & \text{ if } \phi(v) \in L^{1}(\Omega),\\
        +\infty &\text{ otherwise,}
    \end{cases}
\end{equation*}
for every $v \in L^2(\Omega)$. It is well known that the subdifferentials of $\phi$ and $\Phi$ are maximal monotone operators. Moreover, it holds that $\xi \in \partial \Phi(v)$ if and only if $\xi(x) \in \partial \phi(v(x))$ for a.e. $x \in \Omega$. Furthermore, the Moreau--Yosida approximations of parameter $\delta$ of the previous functions are linked by the following relation:
\begin{equation*}
    \Phi_{\delta}(v) = \into \phi_{\delta}(v) \dx.
\end{equation*}
In light of all these properties, with a slight abuse of notation, we will write $\phi$ instead of $\Phi$ and $\partial \phi$ instead of $\partial \Phi$. For more details, the interested reader may refer to \cite[][Proposition 2.16, p. 47]{brezis1973}.\\

\noindent \textbf{The $p$-Laplace operator with homogeneous Neumann conditions.} Let $p \geq 2$ and define
\begin{equation*}
    \Phi_p: L^2(\Omega) \to [0,+\infty], \qquad \Phi_p(v) \coloneqq \begin{cases}
        \frac{1}{p}\into |\nabla v|^p \dx & \text{ if } v \in W^{1,p}(\Omega),\\
        +\infty &\text{ otherwise.}
    \end{cases}
\end{equation*}
Then $\Phi_p$ has domain $\mathcal{D}(\Phi_p) = W^{1,p}(\Omega)$ and it is proper, convex and lower semicontinuous on $L^2(\Omega)$. Hence, its subdifferential $-\Delta_p \coloneqq \partial \Phi_p$ is a maximal monotone operator. Moreover, every $v$ in the natural domain
\begin{equation*}
    \mathcal{D}(-\Delta_p)= \{ v \in W^{1,p}(\Omega) \,:\, -\Delta_p v \in L^2(\Omega), \quad  |\nabla v|^{p-2} \nabla v \cdot \nnu = 0 \text{ on } \Gamma\},
\end{equation*}
it satisfies
\begin{equation*}
     \into -\Delta_p v \, \omega \dx = \into |\nabla v |^{p-2}\nabla v  \cdot \nabla \omega \dx 
\end{equation*}
for every $\omega \in \mathcal{D}(\Phi_p)$. In particular, 
\begin{equation*}
    -\Delta_p v = -\diver(|\nabla v|^{p-2}\nabla v)
\end{equation*}
in the sense of distributions. 
Finally, the following regularity result holds true (the interested reader can refer to \cite[][Theorem 2, Remark 3.5]{Savare98}). 
\begin{lemma}\label{lemma:regularity_inequality} 
    For all $0 < \delta < \frac{1}{p}$, the inclusion $\mathcal{D}(-\Delta_p) \subseteq W^{1+\delta,p}(\Omega)$ holds. Moreover, it exists $C_{\delta} > 0$ such that, for all $v \in W^{1+\delta,p}(\Omega)$,
    \begin{equation*}
        \norm{v}_{W^{1+\delta,p}} \leq C_{\delta}( \norm{-\Delta_p v }_{L^2} + \norm{v}_{L^2}).
    \end{equation*}
\end{lemma}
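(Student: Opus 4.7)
The plan is to combine Nirenberg's translation method with the monotonicity of the $p$-Laplacian. For $p \geq 2$ the workhorse is the classical inequality
\begin{equation*}
(|a|^{p-2}a - |b|^{p-2}b) \cdot (a-b) \geq c_p |a-b|^p \qquad \text{for all } a,b \in \RR^d,
\end{equation*}
which converts the fully nonlinear monotone structure of $-\Delta_p$ into pointwise $L^p$ control of gradient increments whenever one can test a difference of two shifted equations against the corresponding difference of the solutions.

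Given $v\in\mathcal{D}(-\Delta_p)$ and $f:=-\Delta_p v\in L^2(\Omega)$, I would fix a small vector $h \in \RR^d$, write $\tau_h v(x):=v(x+h)$, subtract the equations satisfied by $v$ and $\tau_h v$, and test the difference against $(\tau_h v-v)\eta^p$ with $\eta \in C_c^\infty$ a cut-off supported in a fixed ball contained in $\Omega$. The monotonicity inequality above bounds the resulting bilinear form from below by $c_p\into |\nabla\tau_h v-\nabla v|^p\eta^p\dx$, while the right-hand side $\into(\tau_h f-f)(\tau_h v-v)\eta^p\dx$ is controlled by H\"older's inequality and the elementary bound $\norm{\tau_h v-v}_{L^2}\leq |h|\,\norm{\nabla v}_{L^2}$, producing $\norm{\tau_h\nabla v-\nabla v}_{L^p}^p \leq C|h|$. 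This is precisely the Nirenberg--Slobodeckij criterion yielding $\nabla v\in W^{\delta,p}$ locally for every $\delta<1/p$: indeed, $[\nabla v]_{W^{\delta,p}}^p \sim \int |h|^{-d-\delta p}\norm{\tau_h\nabla v-\nabla v}_{L^p}^p\dd h$, which is finite exactly when $\delta p < 1$.

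The boundary case is the main obstacle. I would flatten $\Gamma$ locally by a $C^2$-chart so that $\Omega$ becomes a half-ball and the condition $|\nabla v|^{p-2}\nabla v \cdot \nnu=0$ becomes a vanishing normal-flux condition on the flat face. Translations in the $d-1$ tangential directions preserve both the transformed equation and the boundary condition, so the interior argument applies verbatim and yields fractional regularity of the tangential derivatives of $\nabla v$. Tangential translations, however, give no information on $\partial_d^2 v$, and one cannot simply invert the principal part to recover it because $\diver(|\nabla v|^{p-2}\nabla v)$ is nonlinear in $\nabla v$ and degenerates where $\nabla v=0$. I would address this by viewing $-\Delta_p v=f$ as a (possibly degenerate) equation linear in $\partial_d^2 v$, with coefficient $|\nabla v|^{p-2}$ and lower-order tangential terms already controlled; carefully balancing the degenerate and non-degenerate regions yields matching fractional control in the normal direction, and the restriction $\delta<1/p$ reflects precisely the power of $p$ consumed in the monotonicity step.

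Finally, I would justify the formal manipulations by approximating $v$ through the resolvent $(I+\lambda\partial\Phi_p)^{-1}f$, which produces smooth approximants on which the above computations are classical, and pass to the limit as $\lambda\to 0$ using the standard convergence properties of the Yosida regularisation in $L^2$. Summing the local estimates over a finite covering of $\overline\Omega$ and absorbing lower-order contributions into $\norm{v}_{L^2}$ produces the global bound $\norm{v}_{W^{1+\delta,p}} \leq C_\delta(\norm{-\Delta_p v}_{L^2}+\norm{v}_{L^2})$ asserted in the lemma.
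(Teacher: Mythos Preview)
The paper does not give its own proof of this lemma; it simply quotes \cite[Theorem~2, Remark~3.5]{Savare98}. So you are supplying an argument where the paper outsources one. Your strategy---Nirenberg difference quotients combined with the strong monotonicity inequality for $p\geq 2$---is the natural direct PDE route, and your interior estimate is correct: testing the difference of the shifted equations against $(\tau_h v - v)\eta^p$ yields $\norm{\tau_h\nabla v-\nabla v}_{L^p(\mathrm{loc})}^p\leq C|h|$, which is exactly the Besov $B^{1/p}_{p,\infty}$ bound and hence $W^{\delta,p}$ for every $\delta<1/p$. The tangential-direction argument near the flattened boundary is likewise sound.

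The genuine gap is the normal direction at the boundary. Your proposal to ``view $-\Delta_p v=f$ as a degenerate equation linear in $\partial_d^2 v$ and balance degenerate and non-degenerate regions'' is too vague to be a proof: the principal part $\partial_d(|\nabla v|^{p-2}\partial_d v)$ cannot be inverted where $\nabla v$ vanishes, and you give no mechanism for recovering the missing $1/p$ derivative in that direction. For the homogeneous Neumann condition at hand this is actually the easy case: since $|\nabla v|^{p-2}\partial_\nu v=0$ forces $\partial_\nu v=0$ on the flat face, an \emph{even reflection} of $v$ (and of $f$) across $\{x_d=0\}$ produces a function solving a perturbed $p$-Laplace equation in a full ball, reducing everything to the interior case. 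You should replace your hand-wavy paragraph with this reflection argument, taking care that the boundary-flattening diffeomorphism turns $-\Delta_p$ into a structurally similar quasilinear operator with coefficients whose regularity is dictated by the $C^2$ boundary.

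For comparison, Savar\'e's approach is more abstract and avoids splitting tangential and normal directions altogether: he works at the level of the convex functional $\Phi_p$, using the subdifferential inequality $\Phi_p(w)-\Phi_p(v)\geq(-f,w-v)$ with $w$ a small perturbation of $v$ by a one-parameter family of diffeomorphisms of $\Omega$ that approximate translations. The boundary is absorbed into an estimate of $\Phi_p(G_t v)-\Phi_p(v)$, and the conclusion is phrased via real interpolation between $L^2$ and $W^{1,p}$. Your direct method is more elementary and gives the same fractional gain, but only after the reflection step above is made precise.
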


\subsection{Hypotheses}\label{subsect:hyp}
Let $d=2,3$ denote the space dimension and $\Omega$ a bounded $C^2$-domain in $\RR^d$.

\begin{enumerate}[\rm{(H\arabic*)}]
    \item \label{hyp:sources} 
    Regarding the nonlinear sources $U$ and $S$ defined in \eqref{eq:defn_U} and \eqref{eq:defn_S}, we consider
    \begin{align}
        & \lambda_p,\ \lambda_a,\ \lambda_c\ \text{ non-negative constants},\\
        & g \in C^0(\RR^2), \text{ non-negative and bounded},\\
        & f \in L^{\infty}(0,T;L^2),\\ 
        & \Lambdac \in C^0(\RR), \text{ non-negative and bounded},\\ 
        & \sigma_c \in L^{\infty}(Q)\text{, non-negative}.
    \end{align}
    
    \item \label{hyp:psi}
    Regarding the smooth potential $\Psi \in C^1(\RR)$, we suppose that the following growth conditions hold
    \begin{align}
        &\Psi(r) \geq C_1|r|^2-C_2, \label{eq:property_psi}\\
        &|\Psi'(r)| \leq C_3 \Psi(r) + C_4 \label{eq:property_psi_derivative}
    \end{align}
    for some fixed positive constants $C_1$, $C_2$, $C_3$, $C_4$ and for every $r \in \RR$.\\
    Moreover, we assume that there exists a convex-concave splitting $\Psi = \psiu + \psin$ such that
    \begin{align}
        &\psiu, \psin \in C^1(\RR),\label{eq:hyp_psi_reg}\\
        &\psiu \text{ is convex and its derivative satisfies } \psiu'(0)=0, \label{eq:hyp_psi_convex}\\
        &\psin \text{ is concave}, \label{eq:hyp_psi_concave}\\
        &\psin' \text{ is  Lipschitz continuous}. \label{eq:hyp_psi_concave_lip}
    \end{align}
\end{enumerate}

\begin{remark}
     We point out that requiring the nonconvex part of the decomposition to be concave is not restrictive. In fact, for every $\psiu,  \widetilde{\Psi} \in C^1(\RR)$ such that $\Psi = \psiu + \widetilde{\Psi}$, where $\psiu$ satisfies \eqref{eq:hyp_psi_convex} and $\widetilde{\Psi}'$ satisfies \eqref{eq:hyp_psi_concave_lip} with a Lipschitz constant $L$, we can consider 
    \begin{equation*}
        \Psi(r) = \left( \psiu(r) + \frac{L}{2} r^2\right) + \left( \widetilde{\Psi}(r)- \frac{L}{2}r^2\right)
    \end{equation*}
    for every $r \in \RR$, which is compliant to \eqref{eq:hyp_psi_reg}--\eqref{eq:hyp_psi_concave_lip}.
\end{remark}

\begin{remark} \label{remark:phi_physical_interval}
    Note that the Hypothesis \ref{hyp:psi} is compatible with the classical choice $\Psi(r)=\frac{1}{4}(1-r^2)^2$. However, it does not allow us to consider singular potential, such as logarithm type. This means that we can not guarantee that $\varphi$ has values in the physically relevant interval $[-1,1]$.
\end{remark}

\begin{enumerate}[\rm{(H\arabic*)},resume]
    \item \label{hyp:elastic_viscous_tensors}
    We assume that the fourth-order elasticity tensor $\C$ in \eqref{eq:elastic_energy_expression} belongs to the space $C^1(\Omega; \RR^{d \times d \times d \times d})$ and is
    \begin{align}
        &\text{Lipschitz continuous and bounded,}\\
        &\text{symmetric (i.e., it satisfies \eqref{cond_sym})},\\
        &\text{strongly elliptic (i.e., it satisfies \eqref{cond_ellipt})}.
    \end{align}
    Regarding the fourth-order viscous tensor $\V$, we suppose that it is of the form 
    \begin{equation}\label{eq:V_form}
        \V = \omega \C
    \end{equation}
    for a positive constant $\omega$.
\end{enumerate}

\begin{remark}
    It is worth pointing out that the viscosity tensor is usually assumed to be only symmetric and positively defined. The stronger assumption \eqref{eq:V_form} is made in order to prove the desired regularity for the displacement $\uu$. Without it, our argument does not apply anymore  (see the proof of \Cref{prop:discrete_existence} below).
\end{remark}

\begin{enumerate}[\rm{(H\arabic*)},resume]
    \item \label{hyp:elastic_viscous_coeff}
    We require that the scalar function $h$ in \eqref{eq:elastic_energy_expression} is $C^2(\RR)$ and that 
    \begin{align}
        &h \text{ and  } h' \text{ are Lipschitz continuous},\\
        &h \text{ is bounded with }0 \leq h \leq h^*.
    \end{align}
    We postulate that the viscosity coefficient $a$ is $C^{1}(\RR)$ and that it satisfies
    \begin{align}
        & a \text{ is Lipschitz continuous,}\\
        & a \text{ is bounded with } 0 <a_* \leq  a \leq a^*.
    \end{align}
    
    \item \label{hyp:p}
    We assume that the constant $p$ that occurs in the $p$-Laplacian $-\Delta_p$ in the damage equation \eqref{eq:damage} satisfies
    \begin{equation}
        p > d
    \end{equation}
    where $d$ is the space dimension.
  
    \item \label{hyp:pi}
    We consider a function $\widehat{\pi} \in C^1(\RR)$ with derivative $\pi \coloneqq \widehat{\pi}'$ that satisfies
    \begin{align}
        &\pi \text{ is Lipschitz continuous}.
    \end{align}
    
    \item \label{hyp:beta}
    Let $\widehat{\beta}: \RR \to [0,+\infty]$ be a function
    \begin{align}
        &\text{proper, convex and lower semicontinuous}\\
        & \text{with }  \interior(\mathcal{D}(\widehat{\beta})) \neq \emptyset,
    \end{align}
    and denote by $\beta \coloneqq \partial \widehat{\beta} : \RR \rightrightarrows \RR$ its subdifferential.

\end{enumerate}

\begin{remark}\label{remark:z_physical_interval}
    Note that Hypothesis \ref{hyp:beta} is quite general, and is compatible with a large class of potentials.  A simple and classical example to keep in mind is the following
    \begin{equation*}
        \widehat{\beta}(r)=I_{[0,1]}(r)= \begin{cases}
        0 & \text{if } r \in [0,1],\\
        +\infty & \text{otherwise}.
    \end{cases}
    \end{equation*}
    In particular, it would ensure that the damage $z$ has values in the physically significant range $[0,1]$.
\end{remark}

\begin{enumerate}[\rm{(H\arabic*)},resume]
    \item \label{hyp:boundary_conditions}
    Regarding the boundary conditions \eqref{eq:sigma_nu} for the nutrient, we assume that
    \begin{align}
        &\sigma_{\Gamma} \in L^{\infty}(\Sigma) \text{ and } \sigma_{\Gamma} \geq 0,\\
        & \alpha \geq 0. 
    \end{align}
    
    \item \label{hyp:initial_conditions}
    Regarding the initial conditions \eqref{eq:initial_conditions}, we assume that
    \begin{align}
        &\varphi_0 \in H^1,\ \Psi(\varphi_0) \in L^1,\\
        &\sigma_0 \in L^{2},\ 0 \leq \sigma_0 \leq M \coloneqq \max\{ \norm{\sigma_c}_{L^{\infty}(Q)}, \norm{\sigma_{\Gamma}}_{L^{\infty}(\Sigma)} \}, \label{eq:hp_sigma0}\\
        &\uu_0 \in H^2 \cap H^1_0,\ \vv_0 \in H^1_0,\\
        &z_0 \in \mathcal{D}(-\Delta_p),\ \widehat{\beta}(z_0) \in L^1. 
    \end{align}
\end{enumerate}

\subsection{Weak formulation and existence result}
\begin{defn}\label{defn:weak_solution}
    We say that a quintuplet $(\varphi,\, \mu,\, \sigma,\, \uu,\, z)$ is a weak solution to the PDEs system \eqref{eq:problem}--\eqref{eq:initial_conditions} if it has the regularity
    \begin{gather*}
            \varphi \in  L^{2}(0,T;H^2) \cap L^{\infty}(0,T; H^1) \cap H^1(0,T;(H^1)'),\quad  \mu \in L^2(0,T;H^1),\\
            \sigma \in L^2(0,T;H^1) \cap H^1(0,T;(H^1)'),\\
            \uu \in H^1(0,T;H^2\cap H^1_0) \cap W^{1,\infty}(0,T;H^1_0) \cap H^2(0,T;L^2),\\
            z \in L^{\infty}(0,T;W^{1,p}) \cap H^1(0,T;L^2),\, -\Delta_p z \in L^2(0,T;L^2) 
    \end{gather*}
    and there exists a subgradient
    \begin{equation*}
        \xi \in L^2(0,T;L^2) \text{ with } \xi \in \beta(z) \text{ a.e. in } Q,
    \end{equation*}
    such that the following equations are satisfied a.e. in $(0,T)$
    \begin{subequations}\label{eq:problem_eq_with_spaces}
        \begin{align}
            &\duality{\partial_t \varphi}{\zeta}_{H^1} + \into  \nabla \mu \cdot \nabla \zeta \dx = \into U(\varphi,\sigma,\epsilonu,z) \, \zeta \dx ,\\
            & \into \mu \zeta \dx  = \into \nabla \varphi\cdot \nabla \zeta \dx + \into \Psi'(\varphi) \zeta \dx + \into W_{,\varphi}(\varphi, \epsilonu, z) \zeta \dx, \label{eq:ch2_def}\\
            &\duality{\partial_t\sigma}{\zeta}_{H^1} + \!\!\into \!\nabla \sigma \cdot \nabla \zeta \dx + \alpha \!\intg (\sigma - \sigma_{\Gamma}) \zeta \dA = \!\into \!S(\varphi,\sigma,z) \zeta \dx ,\\
            & \into \partial_{tt}\uu \cdot \boldsymbol{\omega} \dx + \into \left[a(z)\V \varepsilon(\partial_t\uu) + W_{,\varepsilon}(\varphi, \epsilonu, z)  \right] : \varepsilon(\boldsymbol{\omega}) \dx  = 0,\label{eq:dispalcement_def}\\
            & \begin{aligned}
                \into \partial_t z \rho \dx + \into |\nabla z|^{p-2} \nabla z \cdot \nabla \rho \dx &+ \into \xi \rho \dx+ \into\pi(z)\rho \dx  \\
                &+ \into W_{,z}(\varphi, \epsilonu, z) \rho \dx = 0 
            \end{aligned}\label{eq:damage_def}
        \end{align}
    \end{subequations}
for all $\zeta \in H^1$, $\boldsymbol{\omega} \in H^1_0$ and $\rho \in W^{1,p}$. Moreover, we require that the quintuplet complies with the initial conditions, i.e.,
\begin{equation*}
    \varphi(0)=\varphi_0, \quad \sigma(0)=\sigma_0, \quad  \uu(0)=\uu_0, \quad \partial_t\uu(0)=\vv_0, \quad z(0)=z_0 \quad \text{ a.e. in } \Omega.
\end{equation*}
\end{defn}
\begin{remark}\label{remark:eq_formulation}
    Notice that, with the regularity we demand, requiring \eqref{eq:ch2_def} is equivalent to asking that \cref{eq:ch2} is satisfied in $L^2(0,T;L^2)$ and the boundary condition $\nabla \varphi \cdot \nnu=0$ in \eqref{eq:boundary_conditions} is satisfied in the sense of the traces. The same holds also for the damage equation. Similarly, \cref{eq:dispalcement_def} is equivalent to ask that
    \begin{multline*}
        \partial_{tt}\uu -a'(z)\V\varepsilon(\partial_t\uu)\nabla z - a(z)\diver\left[\V\varepsilon(\partial_t\uu)\right] \\- h'(z)\C\left( \epsilonut -\R\varphi\right) \nabla z - h(z)\diver\left[\C\epsilonut - \C\R\varphi \right]= \mathbf{0}
    \end{multline*}
    is satisfied in $L^2(0,T;L^2)$ and that the boundary condition $\uu = \mathbf{0}$ in \eqref{eq:boundary_conditions} holds in the sense of the traces.
\end{remark}

\begin{remark} Note that, by standard embedding results (see \cite{Strauss_66} and \cite{Lions_Magenes_12}), 
\begin{align*}
    & \varphi \in L^{\infty}(0,T;H^1) \cap C^0([0,T]; (H^1)') &&\mkern-45mu\hookrightarrow \mkern-45mu &&C^0_{\text{w}}([0,T];H^1),\\
    &\sigma \in L^2(0,T;H^1) \cap H^1(0,T; (H^1)') &&\mkern-45mu\hookrightarrow \mkern-45mu && C^0([0,T];L^2),\\
    &\uu \in H^1(0,T;H^2 \cap H^1_0) &&\mkern-45mu\hookrightarrow \mkern-45mu && C^0([0,T];H^2 \cap H^1_0),\\
    &\partial_t\uu \in L^{\infty}(0,T;H^1_0) \cap C^0([0,T]; L^2) &&\mkern-45mu\hookrightarrow \mkern-45mu && C^0_{\text{w}}([0,T];H^1_0),\\
    & z \in L^{\infty}(0,T;W^{1,p}) \cap C^0(0,T; L^2) &&\mkern-45mu\hookrightarrow \mkern-45mu && C^0_{\text{w}}([0,T];W^{1,p}),
\end{align*}
so $\varphi(0)$ makes sense in $H^1$,  $\sigma(0)$ in $L^2$,  $\uu(0)$ in $H^2 \cap H^1_0$, $\partial_t \uu(0)$ in $H^1_0$ and $z(0)$ in $W^{1,p}$. This justifies the initial data regularities that we prescribed.     
\end{remark}

\begin{thm}\label{thm:existence}
Let Hypotheses \ref{hyp:sources}--\ref{hyp:initial_conditions} be satisfied. Then, there exists a weak solution to the system \eqref{eq:problem}--\eqref{eq:initial_conditions} in the sense of \Cref{defn:weak_solution}
with the additional property that 
\begin{equation*}
    0 \leq \sigma \leq M \text{  a.e. in } Q.
\end{equation*}
\end{thm}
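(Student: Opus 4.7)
The plan is to follow the path sketched in Subsection~\ref{subsect:aim_of_the_paper}: construct approximate solutions via a time discretisation of semi-implicit Euler type (together with a Moreau--Yosida regularisation of $\beta$ if needed for the discrete problem to be solvable), derive uniform a~priori estimates, and pass to the limit as the time-step $\tau \to 0$. For the discrete scheme I would treat the Cahn--Hilliard, nutrient and damage equations implicitly, except for the concave/smooth parts ($\psin'$, $\pi$, the sources) which I would evaluate at the previous step; for the displacement I would use a two-step scheme that approximates $\partial_{tt}\uu$ by a second-order difference quotient and $\partial_t\uu$ by the backward increment $D_{\tau,k}\uu$. Existence of the discrete solution at each time step should follow from minimising a strictly convex incremental functional (for the $\varphi$--$\uu$--$z$ block, exploiting the convex/concave splittings assumed in \hypref{hyp:psi} and \hypref{hyp:pi}, the convexity of $\widehat{\beta}$, and the coercivity given by \eqref{cond_ellipt} plus the $W^{1,p}$ term in $z$) and from standard linear theory for $\sigma$. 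The maximum principle $0 \leq \sigma \leq M$ I would obtain already at the discrete level by testing the $\sigma$-equation with $(\sigma - M)^+$ and with $-\sigma^-$ and using \eqref{eq:hp_sigma0}.

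Next, I would derive a global energy estimate by formally testing (in discrete form) \eqref{eq:ch1} with $\mu$, \eqref{eq:ch2} with $-\Dk\varphi$, \eqref{eq:nutrient} with $\sigma$, \eqref{eq:displacement} with $D_{\tau,k}\uu$, and \eqref{eq:damage} with $D_{\tau,k} z$, then summing and telescoping. The mechanical cross-terms $W_{,\varphi}\,\Dk\varphi$, $W_{,\varepsilon}:\varepsilon(D_{\tau,k}\uu)$, $W_{,z}\,D_{\tau,k}z$ cancel up to the discrete chain rule, leaving the dissipation $\norm{\nabla\mu}_{L^2}^2 + a(z)\V\varepsilon(\partial_t\uu):\varepsilon(\partial_t\uu) + |\partial_t z|^2$ and the source contribution $\into U\mu \dx$. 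The latter is handled (as noted in the mathematical difficulties list) by splitting $U$ into its mean-free and mean parts and using Poincaré--Wirtinger on the mean-free part, while controlling the mean of $\mu$ from \eqref{eq:ch2_def} via \eqref{eq:property_psi_derivative}, absorbing the $\norm{\nabla\varphi}_{L^2}^2$ and $\norm{\epsilonu}_{L^2}^2$ factors using Young's inequality and the other terms in the energy. A discrete Gronwall (\Cref{lemma:discrete_gronwall}) then yields uniform bounds consistent with the regularity claimed in \Cref{defn:weak_solution}, except for the parabolic regularity of $\varphi$ and the enhanced regularity of $\uu$ and $z$.

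The main obstacle will be the regularity estimate on $\uu$ needed to pass to the limit in the quadratic term $W_{,z}(\varphi,\epsilonu,z) = \tfrac12 h'(z)\C(\epsilonu - \R\varphi):(\epsilonu - \R\varphi)$. The idea is to exploit the specific structure $\V = \omega\,\C$ from \hypref{hyp:elastic_viscous_tensors}: testing the discrete momentum balance \eqref{eq:displacement} with $-\diver[\C\,\epsilonudk]$ (or the analogue of $-\Delta(D_{\tau,k}\uu)$), one gets on the left a coercive term $\omega\into a(z)|\diver[\C\epsilonudk]|^2 \dx$ thanks to \Cref{lemma:H2_inequality} and the lower bound $a\geq a_*$, while on the right the elastic stress $W_{,\varepsilon}$ splits into $h(z)\C\epsilonuk$ and $-h(z)\C\R\varphi$; the factor $h(z)$ is bounded and the commutator term $h'(z)\nabla z \cdot (\dots)$ coming from the divergence requires an $L^\infty$ bound on $\nabla z$, which is provided precisely by the $L^\infty(0,T;W^{1,p})$ control coming from the $p$-Laplacian with $p>d$ via Sobolev embedding. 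A parallel regularity estimate on $z$, obtained by testing the discrete damage equation by $-\Delta_p z$ and applying \Cref{lemma:regularity_inequality}, gives the desired $-\Delta_p z \in L^2(Q)$. Combining these yields uniform $H^1(0,T;H^2\cap H^1_0)\cap H^2(0,T;L^2)$ bounds for $\uu$ and in particular strong convergence of $\epsilonu$ in $L^2(Q)$.

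Finally, from the uniform bounds I would extract, through the standard piecewise-constant and piecewise-affine time interpolants (the $\overline{\cdot}_\tau$, $\underline{\cdot}_\tau$ notation already prepared in the macros), subsequences converging in the appropriate weak/weak-$\star$ topologies, and use Aubin--Lions/\Cref{thm:ehrling_lemma} to obtain strong convergence of $\varphi$, $\sigma$, $z$ in $L^2(Q)$, of $\epsilonu$ in $L^2(Q)$ thanks to the regularity estimate, and of $z$ in $C^0([0,T];C^0(\overline\Omega))$ via the compact embedding $W^{1,p}\hookrightarrow C^0$. These suffice to pass to the limit in every nonlinearity: the Lipschitz or continuous bounded terms ($\Psi'$ via \eqref{eq:property_psi_derivative}, $\pi$, $g$, $\lambdas$, $a$, $h$, $h'$) by dominated convergence, the quadratic mechanical coupling in $W_{,z}$ by the strong $L^2$ convergence of $\epsilonu$, and the $p$-Laplacian term by a Minty--Browder-type monotonicity argument. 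The subgradient $\xi\in\beta(z)$ is identified in the limit by using the weak convergence of $\xi_\tau$ in $L^2(Q)$, the strong convergence of $z_\tau$, and maximal monotonicity of $\beta$ applied in $L^2(Q)$ as recalled in the preliminaries. The initial conditions are preserved through the $C^0_{\text{w}}$ embeddings, concluding the proof.
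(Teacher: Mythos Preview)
Your overall architecture matches the paper's proof closely: semi-implicit time discretisation with Yosida regularisation of $\beta$, discrete comparison principle for $\sigma$, energy estimate via the same testing recipe, a second-order regularity estimate for $\uu$ exploiting $\V=\omega\C$, and passage to the limit via Aubin--Lions and maximal-monotone identification. Two points, however, would not go through as you describe them.

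First, the discrete existence step. You propose to obtain $(\phik,\uk,\zk)$ jointly by minimising a strictly convex incremental functional. This fails: the elastic energy $W(\varphi,\epsilonu,z)=\tfrac12 h(z)\,\C(\epsilonu-\R\varphi):(\epsilonu-\R\varphi)$ is convex in $(\varphi,\uu)$ for fixed $z$ and convex in $z$ for fixed $(\varphi,\uu)$ only after the splitting $h=\hu+\hn$, but it is \emph{not jointly} convex in $(\varphi,\uu,z)$ (already $z\mapsto h(z)$ times a nonnegative quadratic form is not convex in general). The paper avoids this by solving the discrete equations \emph{sequentially} (first $\sigmak$, then $(\phik,\muk)$, then $\zk$, then $\uk$), freezing cross-couplings at the previous step so that each subproblem is genuinely convex or linear. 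In addition, the paper inserts two regularising terms $\tau\Dk\mu$ and $\tau\Dk\varphi$ into the discrete Cahn--Hilliard system; these produce the extra $\tfrac{\tau}{2}\norm{\muk}_{L^2}^2$ and $\norm{\phik-\phikm}_{L^2}^2$ contributions in the energy estimate, the latter being used precisely to absorb the $\tau|\Dk\varphi|$ term coming from the mean-value control of $\muk$.

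Second, in the $H^2$-estimate for $\uu$ you say the commutator term $h'(z)\nabla z\cdot(\dots)$ ``requires an $L^\infty$ bound on $\nabla z$, which is provided \dots\ via Sobolev embedding''. The embedding $W^{1,p}\hookrightarrow C^0(\overline\Omega)$ for $p>d$ gives $z\in L^\infty$, not $\nabla z\in L^\infty$; you only have $\nabla z\in L^p$. The paper handles this term by H\"older with exponents $\tfrac1q+\tfrac1p+\tfrac12=1$, noting that $p>d$ forces $q<6$ so $H^1\hookrightarrow L^q$ closes the estimate. Relatedly, for the quadratic term $W_{,z}$ in the limit passage, strong $L^2(Q)$ convergence of $\epsilonu$ is not enough; you need $L^4(Q)$, which the paper obtains from the $L^\infty(0,T;H^2)$ bound on $\uu$ via the compact embedding $H^2\hookrightarrow\hookrightarrow W^{1,4}$.
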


\begin{remark}
    It is not difficult to prove that if $(\varphi, \mu, \sigma, \uu, z)$ is a weak solution to the PDEs system \eqref{eq:problem}--\eqref{eq:initial_conditions}, then $\varphi$ enjoys the maximal regularity
    \begin{equation}\label{eq:phi_max_reg}
        \varphi \in L^4(0,T; H^2).
    \end{equation}
    Here, we show \eqref{eq:phi_max_reg} by following the approach from \cite[][Remark 2.3, p. 1562]{Garcke_Lam_Signori_Opt_Contr_2021}. As pointed out in \Cref{remark:eq_formulation}, due to the regularity and boundary conditions fulfilled by $\varphi$, equation \eqref{eq:ch2} is satisfied in $ L^2$ a.e. in $(0,T)$. Explicitly, it holds
    \begin{equation*}
        \into \mu \zeta \dx = \into \left[(-\Delta \varphi)  + \Psi'(\varphi) + W_{,\varphi}(\varphi, \epsilonu, z) \right] \zeta \dx
    \end{equation*}
    for every $\zeta \in L^2$, a.e. in $(0,T)$. Taking $\zeta = - \Delta \varphi$ as a test function, we have
    \begin{equation*}
        \begin{split}
            \norm{-\Delta \varphi}_{L^2}^2 &= \into \nabla \mu \cdot \nabla \varphi \dx - \into \Psi'(\varphi) (-\Delta \varphi) \dx - \into W_{,\varphi}(\varphi, \epsilonu, z)(- \Delta \varphi) \dx\\
            & \coloneqq I_1 + I_2 + I_3,
        \end{split}
    \end{equation*}
    where we have integrated by parts in the first term on the right-hand side and employed homogeneous Neumann boundary conditions. To handle $I_1$, we simply use the H\"older inequality. Regarding $I_2$, we write $\Psi$ as the sum of its convex and concave parts. Then, proceeding formally, we observe that
    \begin{equation*}
        \into \psiu'(\varphi)(-\Delta \varphi) \dx = \into \psiu''(\varphi)|\nabla \varphi|^2 \dx \geq 0
    \end{equation*}
    since $\psiu$ is convex. Notice that this is not rigorous because $\psiu$ is only $C^1$, but this inequality can be proved employing the Yosida--Moreau approximation of $\psiu$, as we will do in details in the proof of \Cref{prop:a_priori_estimate}. Thus, we have
    \begin{equation*}
        I_2 = - \into [\psiu'(\varphi)+ \psin'(\varphi) ] (-\Delta \varphi) \dx \leq  - \into \psin''(\varphi)|\nabla \varphi|^2 \dx  \leq C \norm{\nabla \varphi}_{L^2}^2
    \end{equation*}
    because $\psin'$ is Lipschitz continuous accordingly to hypothesis \ref{hyp:psi}. Finally, we turn our attention to $I_3$. Applying the H\"older and the Young inequalities leads to 
    \begin{equation*}
        I_3 \leq \norm{W_{,\varphi}(\varphi, \epsilonu, z)}_{L^2} \norm{-\Delta \varphi}_{L^2} \leq \frac{1}{2} \norm{W_{,\varphi}(\varphi, \epsilonu, z)}_{L^2}^2 + \frac{1}{2} \norm{-\Delta \varphi}_{L^2}^2.
    \end{equation*}
    Then, the term related to $W_{,\varphi}$ can be treated as follows:
    \begin{equation*}
        \norm{W_{,\varphi}(\varphi, \epsilonu, z)}_{L^2}^2 = \into \left|h(z)\C(\epsilonu-\R \varphi) : \R  \right|^2 \dx \leq C \left(\norm{\epsilonu}_{L^2}^2 + \norm{\varphi}_{L^2}^2\right),
    \end{equation*}
    recalling that $h$ is bounded by hypothesis \ref{hyp:elastic_viscous_coeff}. Putting these estimates together, we obtain
    \begin{equation*}
        \frac{1}{2}\norm{-\Delta \varphi}_{L^2}^2 \leq C \left( \norm{\nabla \mu}_{L^2} \norm{\nabla \varphi}_{L^2} + \norm{\varphi}_{H^1}^2 + \norm{\epsilonu}_{L^2}^2\right) \leq  C \left( \norm{\nabla \mu}_{L^2}  + 1\right),
    \end{equation*}
    where the last inequality holds because
    \begin{equation*}
        \varphi \in L^{\infty}(0,T; H^1), \qquad \uu \in W^{1,\infty}(0,T;H^1_0).
    \end{equation*}
    Taking the square of both sides and integrating in time, we end up with
    \begin{equation*}
        \norm{-\Delta \varphi}^4_{L^4(L^2)} \leq C \left( \norm{\nabla \mu}_{L^2(L^2)}^2 + 1\right) \leq C,
    \end{equation*}
    since $\mu \in L^2(0,T;H^1)$. Thus, \eqref{eq:phi_max_reg} follows from standard elliptic regularity.
\end{remark}

\section{Proof of the existence theorem}\label{sect:proof}
To prove the existence theorem, we will introduce a semi-implicit Euler scheme that is a time-discrete and regularised version of our system. 

\subsection{Time discretisation} \label{subsection:time_discretisation}
Let $\tau$ be a positive and small real number. We consider a partition of $[0,T]$ with nodes 
\begin{equation*}
    t_{\tau}^k \coloneqq \begin{cases}
        k \tau & k=0, \dots, K_{\tau}-1,\\
        T & k = K_{\tau},
    \end{cases}
\end{equation*}
 where $K_{\tau}$ is the ceiling integer part of $T / \tau$, i.e., is the greater integer such that $\tau (K_{\tau}-1)$ is strictly smaller than $T$. We also introduce the notation: 
\begin{equation*}
    I_{\tau}^k \coloneqq \begin{cases}
        [0,\tau] &\text{ if } k=1,\\
        (t_{\tau}^{k-1},\ t_{\tau}^k] &\text{ if } k = 2, \dots, K_{\tau}.
    \end{cases}
\end{equation*}
With a slight abuse of terminology, we refer to the partition as uniform and to $\tau$ as its time step, even though the last interval may have a smaller length than $\tau$.
We approximate $f$, $\sigma_c$ and $\sigma_{\Gamma}$  with their local means, i.e., we define
\begin{equation*}
    \fk \coloneqq \frac{1}{\tau}\intk f\ds, \qquad  \sigmack \coloneqq \frac{1}{\tau}\intk \sigma_c\ds,\qquad  \sigmagk \coloneqq \frac{1}{\tau}\intk \sigma_{\Gamma} \ds,  
\end{equation*}
for every $k=1, \dots, K_{\tau}$.
\begin{remark}
It is obvious that, since $f \in L^{\infty}(0,T;L^2)$, $\sigma_c \in L^{\infty}(Q)$, and $\sigma_{\Gamma} \in L^{\infty}(\Sigma)$, then $\fk \in  L^{2}$, $\sigmack \in  L^{\infty}$, and $\sigmagk \in L^{\infty}_{\Gamma}$ with 
\begin{equation} \label{eq:estimate_dataK}
    \norm{\fk}_{L^{2}} \leq \norm{f}_{L^{\infty}(L^{2})},\ \norm{\sigmack}_{L^{\infty}} \leq \norm{\sigma_c}_{L^{\infty}(Q)},\
    \norm{\sigmagk}_{L^{\infty}_{\Gamma}} \leq \norm{\sigma_{\Gamma}}_{L^{\infty}(\Sigma)},
\end{equation}
for every $k=1, \dots, K_{\tau}$. In addition, $0 \leq \sigmack, \sigmagk \leq M$. 
\end{remark}

\noindent For every sequence of scalar or vector-valued functions $\{w_k\}_k$ defined over $\Omega$, we adopt the notation:
\begin{equation*}
    \Dk w = \frac{w_k-w_{k-1}}{\tau},\qquad \Dk^2 w = \frac{w_k-2w_{k-1} + w_{k-2}}{\tau^2},
\end{equation*}
for every $k$ for which it makes sense. We introduce the time-discrete approximation of our problem, which is posed in $\Omega$:
\begin{subequations}\label{eq:problemK}
    \begin{align}
        &\Dk\varphi - \Delta \muk = U_k -\tau\Dk\mu 
       \label{eq:ch1_k},\\
        & \muk = -\Delta \phik + \psiu'(\phik) + \psin'(\phikm) + W_{,\varphi}(\phik, \epsilonukm, \zkm) + \tau \Dk \varphi\label{eq:ch2_k},\\
        & \Dk\sigma - \Delta \sigmak = S_k\label{eq:nutrient_k},\\
        & \Dk^2\uu - \diver\left[a(\zk)\V \epsilonudk  + W_{,\varepsilon}(\phik, \epsilonuk, \zk)  \right] = \mathbf{0} \label{eq:u_k},\\
        &\begin{aligned}
            \Dk z - \Delta_p \zk + \beta_{\tau}(\zk) + \pi(\zkm)  &+ \Wzu(\phik, \epsilonukm, \zk)\\ &+ \Wzn(\phik, \epsilonukm, \zkm) = 0.
        \end{aligned}\label{eq:z_k}
    \end{align}
\end{subequations}
Here, for brevity, we employed the following notation for the source terms:
\begin{align*}
    & U_k \coloneqq \bigg(\frac{ \lambda_p \sigmak}{1+|W_{,\varepsilon}(\phikm,\epsilonukm,\zkm)|} - \lambda_a + \fk\bigg) g(\phikm,\zkm),\\
    & S_k \coloneqq  -\lambda_c\sigmak g(\phikm,\zkm) +\Lambdac(\zkm)(\sigmack-\sigmak).
\end{align*}
The system \eqref{eq:problemK} is coupled with the boundary conditions on $\Gamma$:
\begin{subequations}\label{eq:boundary_conditionsk}
    \begin{align}
        & \nabla\phik \cdot \nnu = \nabla \muk \cdot \nnu = 0, \label{eq:phi,mu_nuk}\\
        & \nabla\sigmak \cdot \nnu + \alpha(\sigmak -\sigmagk) = 0, \label{eq:sigma_nuk}\\
        & \uk = \mathbf{0},  \label{eq:u_nuk}\\
        & (|\nabla \zk|^{p-2}\nabla \zk) \cdot \nnu = 0.\label{eq:z_nuk}
    \end{align}
\end{subequations}
For every $\tau > 0$  we employ a recursive procedure that, starting from the initial values 
\begin{equation} \label{eq:defn_mu0}
    \varphi_{\tau}^0 \coloneqq \varphi_0, \quad
    \sigma_{\tau}^0 \coloneqq \sigma_0,\quad 
    \uu_{\tau}^0 \coloneqq \uu_0,\quad
    z_{\tau}^0 \coloneqq z_0,\quad
\end{equation}
gives $(\phik,\ \muk,\ \sigmak,\ \uk,\ \zk)$ for every $k = 1, \dots, K_{\tau}$ that satisfies the previous system \eqref{eq:problemK}--\eqref{eq:boundary_conditionsk} in a proper sense that will be specified in \Cref{prop:discrete_existence}. 
Notice that, due to the presence of $\tau (\Dk \mu) = \muk - \mukm$ in the discrete Cahn--Hilliard equation \eqref{eq:ch1_k}, at the step $k=1$ the term $\mu_{\tau}^0$ appears. So, we define 
\begin{equation*}
    \mu_{\tau}^0 \coloneqq 0.
\end{equation*}
Similarly, to give a meaning to the term $D^2_{\tau,k}\uu$ in the displacement equation \eqref{eq:u_k} at the step $k=1$, we introduce
\begin{equation*}
    \uu^{-1}_{\tau} \coloneqq \uu_0 - \tau \vv_0,
\end{equation*}
where $\uu_0$ and $\vv_0$ are, respectively, the initial displacement and the initial velocity prescribed in \eqref{eq:initial_conditions}.
 Moreover,  we will sometimes denote the time-discrete velocity at the time-step $k$ as
\begin{equation*}
    \vk \coloneqq \Dk \uu.
\end{equation*}
Before stating the well-posedness result for the approximate system, let us comment briefly on our discretisation scheme. 
\begin{itemize}
    \item Regarding the discrete Cahn--Hilliard equation \eqref{eq:ch1_k}--\eqref{eq:ch2_k}, we added the regularising terms $-\tau\Dk \mu = \mukm - \muk$ and $\tau\Dk \varphi= \phik - \phikm$ respectively to \eqref{eq:ch1_k} and \eqref{eq:ch2_k}. Without going into technical details now, let us just say that $-\tau\Dk \mu$ allows us to rewrite the equations \eqref{eq:ch1_k}--\eqref{eq:ch2_k} in an equivalent abstract form for which the existence of a solution is automatically guaranteed. This formulation is obtained thanks to the term $(I-\Delta)\muk$ that appears in the equation \eqref{eq:ch1_k}. Thus, we can apply the inverse of $(I-\Delta)$ that, as we will see, has some good properties, obtain $\muk$ and substitute it in equation \eqref{eq:ch2_k}. On the other hand, thanks to $-\tau\Dk \varphi$, the term $\phik$ appears in the equation \eqref{eq:ch2_k}. It guarantees some coercivity and ensures the uniqueness of the solution. Notice that both terms $\Dk \mu$ and $\Dk \varphi$ are multiplied by $\tau$, so they are expected to vanish as $\tau \to 0$. The second choice we made is to evaluate $\psiu$ at $\phik$ and $\psin$ at $\phikm$. This is quite common and, again, motivated by some solvability issues. The main idea is to exploit the monotonicity of $\psiu'$ to prove existence and the fact that $\psin'$ is Lipschitz continuous to control the $L^2$- norm of this perturbative term.
    
    \item We employed a convex-concave splitting for $W$ with respect to its third variable
    \begin{align*}
        &\oset{$\smallsmile$}{W}{}_3(\varphi, \epsilonu, z) \coloneqq \frac{1}{2}\hu(z)\C(\epsilonu-\R\varphi) : (\epsilonu-\R\varphi),\\
        &\oset{$\smallfrown$}{W}{}_3(\varphi, \epsilonu, z) \coloneqq \frac{1}{2}\hn(z)\C(\epsilonu-\R\varphi) : (\epsilonu-\R\varphi),
    \end{align*}
    which, in turn, relies on a convex-concave splitting for $h$ given by
    \begin{equation*}
        \hu(z) \coloneqq h(z) + \frac{1}{2} \left(\sup_{x \in \RR} |h''(x)|\right) z^2,\qquad \hn(z) \coloneqq -\frac{1}{2} \left(\sup_{x \in \RR} |h''(x)|\right) z^2.
    \end{equation*}
    We observe that $\hu$ is convex, $\hn$ is concave, and  $h=\hu+\hn$. Consequentially, $\oset{$\smallsmile$}{W}{}_3$ is convex, $\oset{$\smallfrown$}{W}{}_3$ is concave, and $W = \oset{$\smallsmile$}{W}{}_3 + \oset{$\smallfrown$}{W}{}_3$.  It is worth pointing out that, since $h'$ is Lipschitz by Hypothesis \ref{hyp:elastic_viscous_coeff}, the same holds for $\hu'$ and $\hn'$ and, since $h > 0$, also $\hu > 0$. However, $\hu$ and $\hn$ are not bounded. Notice that we did not need to introduce a convex-concave decomposition for $W$ with respect to its first and second variables because it is already convex with respect to  $\varphi$ and $\epsilonu$. This splitting, as well as the careful choice between implicit and explicit arguments for the derivatives of $W$, will have a key role in carrying out the discrete energy a priori estimate in \Cref{prop:a_priori_estimate}, where we will employ the following trivial result, the proof of which is just a simple application of convex and concave inequalities.
    \end{itemize}
    
\begin{lemma}\label{convex_concave_decomposition}
Let $F: \RR \to \RR$ be a differentiable function that admits a convex-concave decomposition $F = \oset{$\smallsmile$}{F}{} + \oset{$\smallfrown$}{F}{}$ with differentiable $\oset{$\smallsmile$}{F}{}$ and $\oset{$\smallfrown$}{F}{}$. Then, 
\begin{equation*}
    (\oset{$\smallsmile$}{F}{}'(x) + \oset{$\smallfrown$}{F}{}'(y))(x-y) \geq F(x) - F(y)
\end{equation*}
for every $x,y \in \RR$.
\end{lemma}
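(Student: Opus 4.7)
The plan is to derive the asserted inequality by adding together two standard tangent-line (subgradient-type) bounds, one coming from convexity and one from concavity. Since both $\oset{$\smallsmile$}{s}{}$ and $\oset{$\smallfrown$}{s}{}$ are assumed differentiable on all of $\RR$, the usual characterisations of convex and concave $C^1$ functions apply pointwise, with no subdifferential technicalities.

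First I would exploit the convex part at the point $x$: by the tangent-line characterisation of differentiable convex functions,
\begin{equation*}
    \oset{$\smallsmile$}{s}{}(y) \geq \oset{$\smallsmile$}{s}{}(x) + \oset{$\smallsmile$}{s}{}'(x)(y-x)
\end{equation*}
for all $x,y \in \RR$, which I rearrange as $\oset{$\smallsmile$}{s}{}'(x)(x-y) \geq \oset{$\smallsmile$}{s}{}(x) - \oset{$\smallsmile$}{s}{}(y)$. Symmetrically, applying the tangent-line characterisation of differentiable concave functions at the point $y$ yields
\begin{equation*}
    \oset{$\smallfrown$}{s}{}(x) \leq \oset{$\smallfrown$}{s}{}(y) + \oset{$\smallfrown$}{s}{}'(y)(x-y),
\end{equation*}
which gives $\oset{$\smallfrown$}{s}{}'(y)(x-y) \geq \oset{$\smallfrown$}{s}{}(x) - \oset{$\smallfrown$}{s}{}(y)$. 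Note that in the convex inequality the derivative is evaluated at $x$ (the first slot) while in the concave inequality it is evaluated at $y$ (the second slot); this mismatch is precisely what the lemma's left-hand side requires.

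Summing the two inequalities and using the decomposition $s = \oset{$\smallsmile$}{s}{} + \oset{$\smallfrown$}{s}{}$ on the right-hand side produces
\begin{equation*}
    \big(\oset{$\smallsmile$}{s}{}'(x) + \oset{$\smallfrown$}{s}{}'(y)\big)(x-y) \geq s(x) - s(y),
\end{equation*}
which is the claim. There is no genuine obstacle: the argument is a one-line combination of the two elementary tangent-line inequalities, and it is valid for all $x,y \in \RR$ without any further assumption beyond those in the statement.
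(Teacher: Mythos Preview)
Your proof is correct and follows exactly the same approach as the paper: apply the tangent-line inequality for the convex part at $x$ and for the concave part at $y$, then add and use $s = \oset{$\smallsmile$}{s}{} + \oset{$\smallfrown$}{s}{}$.
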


\begin{itemize}
    \item Finally, we replaced $\widehat{\beta}$ with its Moreau--Yosida approximation $\widehat{\beta}_{\tau}$ defined by
    \begin{equation*}
        \widehat{\beta}_{\tau}(z) \coloneqq \min_{y \in \RR} \bigg\{\frac{1}{2\tau}|y-z|^2+\widehat{\beta}(y )\bigg\} \qquad \forall z \in \RR,
    \end{equation*}
    and, consequentially, the maximal monotone operator $\beta$ in the damage equation with $\beta_{\tau} \coloneqq (\widehat{\beta}_{\tau})'$. Note that we set the regularisation parameter equal to the time step $\tau$ so that we will pass to the limit simultaneously in the Yosida regularisation and in the time discretisation as $\tau \to 0$.
\end{itemize}
\begin{remark}
    We recall that $\widehat{\beta}_{\tau} \in C^1(\RR)$ is still convex and that $\beta_{\tau}$ is non-decreasing and Lipschitz continuous with Lipschitz constant bounded by $\tau^{-1}$ (see \cite[][Proposition 2.6, p. 28 and Proposition 2.11 p. 39]{brezis1973}). Moreover, since $\widehat{\beta}$ is non-negative, $\widehat{\beta}_{\tau}$ is non-negative. Finally, it is obvious by the definition of Moreau--Yosida approximation that $\widehat{\beta}_{\tau}(z) \leq \widehat{\beta}(z)$ for every $z \in \RR$.
\end{remark}

\begin{prop} \label{prop:discrete_existence}
    Let Hypotheses \ref{hyp:sources}--\ref{hyp:initial_conditions} be satisfied. Then, for every $k = 1 \dots K_{\tau}$, there exists a unique weak solution 
    \begin{equation*}
        (\phik,\, \muk,\, \sigmak,\, \uk, \zk) \in H^2 \times H^2 \times H^1 \times H^2 \times \mathcal{D}(-\Delta_p)
    \end{equation*}
    to the system \eqref{eq:problemK}--\eqref{eq:boundary_conditionsk} in the sense that
    it satisfies the boundary conditions \eqref{eq:boundary_conditionsk} in the sense of traces, equations \eqref{eq:ch1_k}, \eqref{eq:ch2_k}, \eqref{eq:u_k}, and \eqref{eq:z_k} hold a.e. in $\Omega$, and \cref{eq:nutrient_k} plus boundary condition \eqref{eq:sigma_nuk} hold in the weak sense
    \begin{equation*}
        \into \Dk\sigma \zeta \dx + \into \nabla \sigmak \cdot \nabla \zeta \dx + \alpha \intg (\sigmak - \sigmagk) \zeta\dA = \into  S_k \zeta \dx
    \end{equation*}
    for all $\zeta \in H^1$.
\end{prop}

\begin{proof}
    \noindent \textbf{\textit{Nutrient equation.}} 
First of all, we can rewrite the system 
\begin{equation}
    \begin{cases}
        \Dk\sigma - \Delta \sigmak = -\lambda_c\sigmak g(\phikm,\zkm) +\Lambdac(\zkm)(\sigmack-\sigmak) & \text{ in }\Omega\\
        \nabla\sigmak \cdot \nnu + \alpha(\sigmak -\sigmagk) = 0 & \text{ on }\Gamma
    \end{cases}
\end{equation}
in the more convenient form
\begin{equation} \label{eq:nutrient_kk}
    \begin{cases}
        - \Delta \sigmak + c_k \sigmak = d_k & \text{ in } \Omega\\
        \nabla\sigmak \cdot \nnu + \alpha(\sigmak -\sigmagk) = 0 & \text{ on }\Gamma,
    \end{cases}
\end{equation}
where 
\begin{equation}\label{eq:nutrient_kk_notation}
    c_k \coloneqq \frac{1}{\tau} + \lambda_c g(\phikm,\zkm) + \Lambdac(\zkm),\qquad d_k=\frac{\sigmakm}{\tau} + \Lambdac(\zkm) \sigmack 
\end{equation}
are known terms in $L^{\infty}$ and $L^2$ respectively, with $c_k \geq 0$ a.e. in $\Omega$. The variational formulation of the problem is the following:
\begin{equation*} 
    \begin{cases}
        \text{Find a } \sigma_k \in H^1 \text{ such that } \forall \zeta \in H^1\\
        \displaystyle \into \nabla \sigmak \cdot \nabla \zeta \dx + \alpha \intg \sigmak \zeta \dA +  \into c_k \sigmak \zeta \dx = \into d_k \zeta \dx +  \intg \sigmagk\zeta \dA.
    \end{cases}
\end{equation*}
Using Lax--Milgram theorem, one can show that there exists a unique weak solution $\sigmak \in H^1$.\\

\noindent \textbf{\textit{Cahn--Hilliard equation.}}
We consider the problem:
\begin{equation}\label{eq:ch_k}
    \begin{cases} 
    \Dk\varphi - \Delta \muk = U_k -\tau\Dk\mu & \text{in } \Omega\\
    \muk = -\Delta \phik + \psiu'(\phik) + \psin'(\phikm) + W_{,\varphi}(\phik, \epsilonukm, \zkm) + \tau \Dk \varphi  & \text{in } \Omega\\
    \nabla\phik \cdot \nnu = \nabla \muk \cdot \nnu = 0  & \text{on } \Gamma.
\end{cases}
\end{equation}  
The first equation in \eqref{eq:ch_k} can be reformulated in the equivalent form
\begin{equation} \label{eq:rewrite_chk}
    \frac{1}{\tau} (I-\Delta)^{-1} \phik + \muk = (I-\Delta)^{-1}\left(U_k + \mukm + \frac{1}{\tau}\phikm\right),
\end{equation}
observing that $I-\Delta : \mathcal{D}(-\Delta) \subseteq L^2 \to L^2$ (with Neumann homogeneous boundary condition) is a bijective operator, so $\gamma \coloneqq (I - \Delta)^{-1}: L^2 \to L^2$ is injective. Moreover, $-\Delta: \mathcal{D}(-\Delta) \subseteq L^2 \to L^2$ is a linear single-valued maximal monotone operator and, as a consequence, $\gamma$ is a linear, single-valued, monotone and contractive operator defined on all $L^2$. Substituting $\muk$ in the second equation of \eqref{eq:ch_k} and recalling the expression of $W_{,\varphi}$ from \eqref{eq:W_deriv_phi}, we obtain:
\begin{equation*}
    \begin{split}
        \frac{1}{\tau} & \gamma(\phik) -\Delta \phik + \psiu'(\phik) + \left(\hkm \C \R : \R + 1\right)\phik\\
        &= \gamma\left(U_k + \mukm + \frac{1}{\tau}\phikm\right) - \psin'(\phikm) + \phikm + \hkm \C \epsilonukm : \R.
\end{split}
\end{equation*}
For brevity, we introduce the known functions
\begin{align*}
        &j_k \coloneqq
        \gamma\left(U_k + \mukm + \frac{1}{\tau}\phikm\right) - \psin'(\phikm) + \phikm + \hkm \C \epsilonukm : \R, \\
        &l_k \coloneqq \hkm \C \R : \R + 1.
\end{align*}
We notice that $j_k \in L^2$ and that $l_k$ is bounded from above, since $h$ and $\C$ are bounded by Hypotheses \ref{hyp:elastic_viscous_coeff} and \ref{hyp:elastic_viscous_tensors} respectively, and satisfies $l_k \geq 1$, because $h$ is non-negative and $\C$ is strongly elliptic by Hypotheses \ref{hyp:elastic_viscous_coeff} and \ref{hyp:elastic_viscous_tensors}. To find a solution for 
\begin{equation}\label{eq:chk_redefined}
    \frac{1}{\tau} \gamma(\phik) -\Delta \phik + \psiu'(\phik) + l_k \phik = j_k,
\end{equation}
we introduce $\psiu_{\delta}$, the Moreau--Yosida approximation of $\psiu$ with regularisation parameter $\delta >0$.
We define the operator 
\begin{equation*}
    B_{\tau, k}^{\delta} \coloneqq \frac{1}{\tau} \gamma + \psiu_{\delta}' + l_k I : L^2 \to L^2.
\end{equation*}
We can reformulate the regularised system in the abstract form:
\begin{equation}\label{eq:chk_abstract}
        (B_{\tau, k}^{\delta}-\Delta)(\phid) = j_k. 
\end{equation}
The operator $-\Delta$ is maximal monotone. $B_{\tau,k}^{\delta}$ is monotone (because it is the sum of monotone operators) and hemicontinuous (because it is continuous). Finally, it is easy to show that $B_{\tau,k}^{\delta} - \Delta$ is coercive.  So, we can apply \cite[][Corollary 1.3, p. 48]{barbu1976}, and conclude that $B_{\tau,k}^{\delta} - \Delta$ is maximal monotone and that $\mathcal{R}(B_{\tau,k}^{\delta} - \Delta) = L^2$. This leads to the fact that it exists a 
\begin{equation*}
    \phid \in \mathcal{D}(B_{\tau,k}^{\delta} - \Delta) = L^2 \cap \mathcal{D}(-\Delta)= \{ v \in H^2 | \nabla v \cdot \nnu = 0 \text{ on } \Gamma \}
\end{equation*}
that satisfies \eqref{eq:chk_abstract}. Note that, obviously, $\phid$ also depends on $k$ and $\tau$, but at this level, they are fixed, so we omit this dependence to not overload the notation. Now it only remains to pass to the limit for $\delta \to 0$ and show that the limit satisfies \eqref{eq:chk_redefined}. We need some a priori estimates.
\begin{itemize}
    \item[] \textit{First a priori estimate.} We test \eqref{eq:chk_abstract} with $\phid$:
    \begin{equation*}
        \into \frac{1}{\tau} \gamma(\phid)\phid + |\nabla \phid|^2 + \psiu_{\delta}'(\phid)\phid + l_k \phid^2 \dx = \into j_k \, \phid  \dx.
    \end{equation*}
    Using the fact that $\gamma$ is monotone with $\gamma(0)=0$, that $\psiu_{\delta}'$ is monotone with $\psiu_{\delta}'(0)=0$ and $l_k \geq 1$, we have
    \begin{equation*}
        \norm{\phid}_{H^1}^2 \leq C \norm{\phid}_{L^2} \norm{j_k}_{L^2},
    \end{equation*}
    from which we get $\norm{\phid}_{H^1} \leq C \norm{j_k}_{L^2}= C_{\tau}$, where $C_{\tau}$ does not depend on $\delta$. 
    \item[] \textit{Second a priori estimate.} We test \eqref{eq:chk_abstract} with $-\Delta \phid + \psiu_{\delta}'(\phid)$ and, since $l_k$ is uniformly bounded from above and $\gamma$ is a contraction, by the first a priori estimate we get:
    \begin{equation*}
        \norm{-\Delta \phid + \psiu_{\delta}'(\phid)}_{L^2} \leq \norm{-\frac{1}{\tau}\gamma(\phid) - l_k \phid + j_k}_{L^2} \leq C_{\tau}.
    \end{equation*}
    On the other hand, we have
    \begin{equation*}
        \begin{split}
                \norm{-\Delta \phid + \psiu_{\delta}'(\phid)}_{L^2}^2 &= \into |-\Delta \phid|^2 \dx + \into |\psiu_{\delta}'(\phid)|^2 \dx + 2 \into -\Delta \phid \, \psiu_{\delta}'(\phid) \dx\\
                &= \into |-\Delta \phid|^2 \dx + \into |\psiu_{\delta}'(\phid)|^2 \dx + 2 \into\psiu_{\delta}''(\phid) |\nabla \phid|^2 \dx\\
                &\geq \norm{-\Delta \phid}_{L^2}^2 + \norm{\psiu_{\delta}'(\phid)}_{L^2}^2
        \end{split}
    \end{equation*}
    because, recalling that $\psiu_{\delta}'$ is Lipschitz and non-decreasing, $\psiu_{\delta}'' \geq 0$ a.e.
\end{itemize}
From the first and the second a priori estimates, we get that $\norm{\phid}_{H^2}+\norm{\psiu_{\delta}'(\phid)}_{L^2} \leq C_{\tau}$, so there exist a $\phik \in H^2$ and a $\rho_{\tau}^k \in L^2$ such that, along a non-relabelled subsequence, $\phid \rightharpoonup \phik$ in $H^2$, $\phid \to \phik$ in $L^2$ and $\psiu_{\delta}'(\phid) \rightharpoonup \rho_{\tau}^k$ in $L^2$. Furthermore, because of these convergences, 
\begin{equation*}
    \lim_{\delta \to 0} \into \psiu'_{\delta}(\phid) \, \phid \dx = \into \rho_{\tau}^k \, \phik \dx.
\end{equation*}
So, thanks to \cite[][Proposition 1.1, p. 42]{barbu1976}, we have that $\rho_{\tau}^k = \psiu'(\phik)$. Pointing out that $\gamma(\phid) \to \gamma(\phik)$ in $L^2$ since $\gamma$ is a contraction, we can pass to the weak limit in \eqref{eq:chk_redefined} and deduce that
\begin{equation*}
    \gamma(\phik) - \Delta \phik + \psiu'(\phik) + l_k \phik = j_k
\end{equation*}
in $L^2$. Additionally, we remark that $\nabla \phik \cdot \nnu = 0$ on $\Gamma$ in the sense of traces because $\nabla \phid \cdot \nnu= 0$ for every $\delta$ and the normal trace operator is linear and continuous over $H^2$.\\ 
Finally, we define $\muk$ as in the second equation of the system \eqref{eq:ch_k} and claim that it belongs to
\begin{equation*}
    \mathcal{R}((I-\Delta)^{-1}) = \mathcal{D}(I-\Delta) = \{ v \in H^2 | \nabla v \cdot \nnu = 0 \text{ on } \Gamma \}
\end{equation*}
by comparison in \eqref{eq:rewrite_chk}.
It remains to prove that the solution $(\phik,\muk)$ is unique. We take two solutions and the components $\varphi_1$ and $\varphi_2$ solving \eqref{eq:chk_redefined}. They satisfy
\begin{equation*}
    (\gamma(\varphi_1) - \gamma(\varphi_2)) - \Delta (\varphi_1 -\varphi_2) + \psiu'(\varphi_1) - \psiu'(\varphi_2) + l_k(\varphi_1-\varphi_2) = 0
\end{equation*}
in $L^2$. Testing this equation with $\varphi_1 - \varphi_2$, we have
\begin{multline*}
    \into (\gamma(\varphi_1) -\gamma(\varphi_2) )(\varphi_1 - \varphi_2) \dx + \into |\nabla(\varphi_1 - \varphi_2)|^2 \dx\\ + \into (\psiu'(\varphi_1) - \psiu'(\varphi_2))(\varphi_1 - \varphi_2) \dx
        + \into l_k (\varphi_1 - \varphi_2)^2 \dx  = 0.
\end{multline*}
Since $\gamma$ and $\psiu'$ are monotone, the first and third addends are non-negative. Moreover, given that $l_k \geq 1$, we get
\begin{equation*}
     \into (\varphi_1 - \varphi_2)^2 \dx + \into |\nabla (\varphi_1 - \varphi_2) |^2 \dx \leq 0,
\end{equation*}
from which $\varphi_1 = \varphi_2$ follows. Consequentially, also the components $\mu_1$ and $\mu_2$ must coincide from \eqref{eq:rewrite_chk}.\\

\noindent \textbf{\textit{Damage differential equation.}}
We want to find a weak solution of
\begin{equation} \label{eq:damage_k}
    \begin{cases}
        \begin{split}
            &\Dk z - \Delta_p \zk + \beta_{\tau}(\zk) + \pi(\zkm)  \\
         &\mathcolor{white}{\Dk z } + \displaystyle\frac{\hu'(\zk) + \hn'(\zkm)}{2}\C[\epsilonukm-\R\phik] : [\epsilonukm-\R\phik] = 0
        \end{split} & \text{ in } \Omega\\
        (|\nabla \zk|^{p-2}\nabla \zk) \cdot \nnu = 0 & \text{ on }\Gamma
    \end{cases}
\end{equation}
using a minimizing procedure. So we introduce the functional $ \mathcal{F}_{\tau,k} : W^{1,p} \to \RR$ defined as follows:
\begin{equation*}
    \begin{split}
           \mathcal{F}_{\tau,k}(z) \coloneqq &\frac{1}{2\tau} \int_{\Omega} |z|^2 \dx - \frac{1}{\tau} \int_{\Omega} \zkm z \dx + \frac{1}{p} \int_{\Omega} |\nabla z|^p \dx + \int_{\Omega} \widehat{\beta}_{\tau} (z) \dx  \\&+  \int_{\Omega} \pi(\zkm)z \dx + \int_{\Omega} \frac{\hu(z)}{2} \C[\epsilonukm-\R\phik] : [\epsilonukm-\R\phik] \dx \\&+ \int_{\Omega} \frac{\hn'(\zkm)}{2} \C[\epsilonukm-\R\phik] : [\epsilonukm-\R\phik] z \dx
    \end{split}
\end{equation*}
and we use the direct method of the Calculus of Variations. We consider a minimizing sequence $\{z_j\}_j$ and prove that it admits a subsequence that converges to a minimizer for $\mathcal{F}_{\tau,k}$. We will need coercivity and weakly lower semicontinuity of $\mathcal{F}_{\tau,k}$.
\begin{itemize}
    \item [] \textit{Coercivity.} Reminding that $\widehat{\beta}_{\tau}$ and $\hu$ are nonnegative, $\pi$ and $\hn'$ are Lipschitz, $\zkm \in W^{1,p} \hookrightarrow L^\infty$ since $p$ is strictly bigger than $d$, and $\C$ is bounded and strongly elliptic, we obtain:
    \begin{equation*}
        \mathcal{F}_{\tau,k}(z) \geq \frac{1}{2\tau} \int_{\Omega} |z|^2 \dx - C \int_{\Omega} |z| \dx + \frac{1}{p} \int_{\Omega} |\nabla z|^p \dx  -C\int_{\Omega}  |\epsilonukm-\R\phik|^2 |z| \dx . 
    \end{equation*}
    Using the Young inequality and $\epsilonukm-\R\phik \in H^1 \hookrightarrow L^4$, the previous inequality becomes:
    \begin{equation*}
        \mathcal{F}_{\tau,k}(z) \geq C \int_{\Omega} |z|^2 \dx + C \int_{\Omega} |\nabla z|^p \dx  - C.
    \end{equation*}
    \item [] \textit{Weakly lower semicontinuity.} All terms are convex and continuous in the strong topology and, therefore, weakly lower semicontinuous (see \cite[][Corollary 3.9, p. 61]{brezis2011}).  
\end{itemize}
We note that it exists $C \in \RR$ such that $\inf_{W^{1,p}} \mathcal{F}_{\tau,k} < C$, so we can suppose without loss of generality that $\mathcal{F}_{\tau,k}(z_j) < C$ for every $j$. Thanks to coercivity, it trivially follows that $\{ z_j\}_j$ is bounded in $W^{1,p}$. Thus, there exists a subsequence that we do not relabel and a $\zk \in W^{1,p}$ such that $z_j \rightharpoonup \zk$ in $W^{1,p}$. From weakly lower semicontinuity, we get that:
\begin{equation*}
    \mathcal{F}_{\tau,k}(\zk) \leq \liminf_{j \to +\infty} \mathcal{F}_{\tau,k}(z_j) = \inf_{W^{1,p}}\mathcal{F}_{\tau,k},
\end{equation*}
so $\zk$ is a minimizer for $\mathcal{F}_{\tau,k}$. To conclude, we observe that $\mathcal{F}_{\tau,k}$ is Fréchet differentiable, so the minimum $\zk$ satisfies the following associated Euler-Lagrange equation 
\begin{equation} \label{eq:damage_eulerlagrange}
    \begin{split}
        0 = &\frac{1}{\tau} \int_{\Omega} \zk w \dx - \frac{1}{\tau} \int_{\Omega} \zkm w \dx\\
        &\quad + \int_{\Omega} |\nabla \zk|^{p-2}\nabla \zk \cdot \nabla w \dx + \int_{\Omega} \beta_{\tau} (\zk) w \dx
        + \int_{\Omega} \pi(\zkm)w \dx\\ 
        &\quad + \int_{\Omega} \frac{\hu'(\zk)+\hn'(\zkm)}{2} \C[\epsilonukm-\R\phik] : [\epsilonukm-\R\phik] w\dx 
    \end{split}
\end{equation}
for every $w \in W^{1,p}$. By comparison in  \eqref{eq:damage_eulerlagrange}, since
\begin{equation*}
     \frac{\zk-\zkm}{\tau} + \beta_{\tau} (z) + \pi(\zkm) + \frac{\hu'(\zk)+\hn'(\zkm)}{2} \C[\epsilonukm-\R\phik] : [\epsilonukm-\R\phik] 
\end{equation*}
belongs to $L^2$,  it follows that also $-\Delta_p \zk = - \diver(|\nabla \zk|^{p-2}\nabla \zk)$ belongs to $L^2$. This means that $\zk \in \mathcal{D}(-\Delta_p)$.
Now we prove that the solution is unique. If we suppose to have two solutions to \eqref{eq:damage_k} $z_1$ and $z_2$, they both are minimizers of $\mathcal{F}_{\tau,k}$ and satisfy \eqref{eq:damage_eulerlagrange}. If we consider the difference between the two equations and we take $w=z_1-z_2$ as test function, we obtain:
\begin{equation*}
    \begin{split}
        0 = &\frac{1}{\tau} \int_{\Omega} (z_1-z_2)^2 \dx  + \int_{\Omega} (|\nabla z_1|^{p-2}\nabla z_1 - |\nabla z_2|^{p-2}\nabla z_2) \cdot \nabla (z_1-z_2) \dx \\
        &\quad + \int_{\Omega} \frac{\hu'(z_1)-\hu'(z_2)}{2}(z_1-z_2) \C[\epsilonukm-\R\phik] : [\epsilonukm-\R\phik] \dx \\
        &\quad + \int_{\Omega} \left(\beta_{\tau} (z_1) - \beta_{\tau} (z_2)\right) (z_1-z_2) \dx  \geq \frac{1}{\tau} \norm{z_1-z_2}_{L^2}^2,
    \end{split}
\end{equation*}
where the last inequality follows from the fact that $-\Delta_p$, $\beta_{\tau}$, and $\hu'$ are monotone operators, so the related terms are non-negative. Thus, it turns out that $z_1 = z_2$.\\

\noindent \textbf{\textit{Displacement equation.}} First of all, we rewrite the system
\begin{equation}
    \begin{cases}
         \Dk^2\uu - \diver\left[ a(\zk)\V \epsilonudk + h(\zk) \C (\epsilonuk - \R\phik)   \right] = \mathbf{0} & \text{ in } \Omega\\
         \uk = \mathbf{0} & \text{ on } \Gamma
    \end{cases}
\end{equation}
as 
\begin{equation} \label{eq:displacement_k}
    \begin{cases}
         - \diver\left[ \T_{k} \epsilonuk  \right] + \uk = \mathbf{t}_k & \text{ in } \Omega\\
         \uk = \mathbf{0} & \text{ on } \Gamma,
    \end{cases}
\end{equation}
where we have introduced the following known terms:
\begin{align}
    &\T_k \coloneqq \tau a(\zk) \V + \tau^2  h(\zk) \C = (\tau \omega a(\zk) + \tau^2  h(\zk))\C = \theta(\zk) \C,\\
    &\mathbf{t}_k \coloneqq 2 \ukm - \ukmm -\diver\left[ \tau^2h(\zk)\phik\C\R +  \tau a(\zk)\V\epsilonukm \right].
\end{align}
Since $\T_k$ is bounded and coercive and $\mathbf{t}_k$ is in $L^2$, it is easy to prove using Lax--Milgram theorem that system \eqref{eq:displacement_k}
has a (unique) weak solution $\uk \in H^1_0$. It remains to be proved that $\uk \in H^2$, and it can be done exactly as in \cite[][Lemma 4.1, p. 4596]{Heinemann_Rocca_2015}, using a bootstrap argument. Notice that here is where we need to require $\V = \omega \C$.\\  
\end{proof}

\noindent Given a sequence of scalar or vector-valued functions $\{w_{\tau}^k\}_{k=0}^{K_{\tau}}$ defined over $\Omega$, we introduce the piecewise constant interpolations $\overline{w}_{\tau}, \underline{w}_{\tau}$ and the piecewise linear interpolation $w_{\tau}$ over the time interval $[0,T]$ as
\begin{equation}
    \overline{w}_{\tau}(t)\coloneqq w_{\tau}^k,\qquad \underline{w}_{\tau}(t) \coloneqq w_{\tau}^{k-1}, \qquad
    w_{\tau}(t) \coloneqq \frac{t - t_{\tau}^{k-1}}{\tau} w_{\tau}^k + \frac{t_{\tau}^k -t}{\tau} w_{\tau}^{k-1}
\end{equation}
for every $t \in I_{\tau}^k$. 
With this new notation, the time-discretised and regularised system \eqref{eq:problemK} can be written as 
\begin{subequations} \label{eq:system_tau}
    \begin{align}
        &\partial_t\phit -\Delta \muto =  
        \bigg(\frac{\lambda_p \sigmato}{1+|W_{,\varepsilon}(\phitu,\epsilonutu,\ztu)|}-\lambda_a + \fto\bigg)g(\phitu,\ztu)
        -(\muto-\mutu), \label{eq:ch1_tau}\\
        &\muto = -\Delta\phito + \psiu'(\phito) + \psin'(\phitu)
            - h(\ztu)(\epsilonutu -\R \phito):\C\R + (\phito-\phitu), \label{eq:ch2_tau}\\
        &\partial_t \sigmat -\Delta\sigmato = -\lambda_c\sigmato g(\phitu,\ztu) + \Lambdac(\ztu)(\sigmacto-\sigmato),\label{eq:sigma_tau}\\
        &\partial_t \vt - \diver\left[ a(\zto)\V\epsilonvto + h(\zto)\C(\epsilonuto -\R\phito)\right] = \mathbf{0},\label{eq:u_tau}\\
        &\begin{aligned}
            \partial_t\zt -\Delta_p \zto + \beta_{\tau}(\zto) &+ \pi(\ztu)\\
            &+ \frac{\hu'(\zto)+\hn'(\ztu)}{2}(\epsilonutu-\R\phito):\C(\epsilonutu-\R\phito) = 0.
        \end{aligned} \label{eq:z_tau}
    \end{align}
\end{subequations}

\subsection{A priori estimates for the time-discrete system}
In the following, we will need the boundedness of the nutrient variable $\sigmak$, so we prove a comparison principle.
\begin{lemma} \label{sigma_bounded}
    The function $\sigmak$ satisfies $0 \leq \sigmak \leq M$ for every $k = 0, \dots, K_{\tau}$.
\end{lemma}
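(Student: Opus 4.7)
The plan is to argue by induction on $k$. The base case $k=0$ is immediate from \eqref{eq:hp_sigma0}, which gives $0 \leq \sigma_\tau^0 = \sigma_0 \leq M$ a.e. in $\Omega$. For the inductive step, I assume $0 \leq \sigma_\tau^{k-1} \leq M$ and show $0 \leq \sigma_\tau^k \leq M$. The natural tool is a truncation argument applied to the elliptic formulation \eqref{eq:nutrient_kk}, tested against the negative and the excess parts of $\sigma_\tau^k$, exploiting that both $c_k$ and $d_k$ defined in \eqref{eq:nutrient_kk_notation} inherit good signs from the inductive hypothesis.

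\textbf{Non-negativity.} I would test the weak formulation \eqref{eq:nutrient_kk_variational_formulation} with $\zeta = (\sigmak)^- \in H^1$. Using $\nabla \sigmak \cdot \nabla (\sigmak)^- = -|\nabla (\sigmak)^-|^2$ a.e., and $\sigmak (\sigmak)^- = -((\sigmak)^-)^2$, one arrives at
\begin{equation*}
\into |\nabla (\sigmak)^-|^2 \dx + \alpha \intg ((\sigmak)^-)^2 \dA + \into c_k ((\sigmak)^-)^2 \dx = -\into d_k (\sigmak)^- \dx - \alpha \intg \sigmagk (\sigmak)^- \dA.
\end{equation*}
By the inductive hypothesis $\sigmakm \geq 0$, together with $\lambdas(\zkm) \geq 0$ and $\sigmask \geq 0$ from Hypothesis \hypref{hyp:sources}, one gets $d_k \geq 0$, and $\sigmagk \geq 0$ by Hypothesis \hypref{hyp:boundary_conditions}. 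Thus the right-hand side is non-positive while the left-hand side is non-negative. Since $c_k \geq 1/\tau > 0$, one concludes $(\sigmak)^- = 0$ a.e., i.e., $\sigmak \geq 0$.

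\textbf{Upper bound.} Analogously I would test \eqref{eq:nutrient_kk_variational_formulation} with $\zeta = (\sigmak - M)^+ \in H^1$. Writing $\sigmak = (\sigmak - M)^+ + \min(\sigmak, M)$ on the support of the test function, and using $\nabla \sigmak \cdot \nabla (\sigmak - M)^+ = |\nabla (\sigmak - M)^+|^2$, a direct computation yields
\begin{equation*}
\into |\nabla (\sigmak - M)^+|^2 \dx + \alpha \intg ((\sigmak-M)^+)^2 \dA + \into c_k ((\sigmak-M)^+)^2 \dx = \into (d_k - c_k M)(\sigmak - M)^+ \dx + \alpha \intg (\sigmagk - M)(\sigmak-M)^+ \dA.
\end{equation*}
A direct manipulation of \eqref{eq:nutrient_kk_notation} gives
\begin{equation*}
d_k - c_k M = \frac{\sigmakm - M}{\tau} + \lambdas(\zkm)(\sigmask - M) - M \lambda_c g(\phikm,\zkm),
\end{equation*}
which is $\leq 0$ because $\sigmakm \leq M$ by induction, $\sigmask \leq M$ and $\sigmagk \leq M$ by construction (see \eqref{eq:estimate_dataK} and the subsequent remark), and $\lambda_c, g, \lambdas \geq 0$. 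The same definition of $M$ ensures $\sigmagk - M \leq 0$. The right-hand side is therefore non-positive while the left-hand side is non-negative, and since $c_k > 0$ we conclude $(\sigmak - M)^+ = 0$ a.e., i.e., $\sigmak \leq M$.

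There is no real obstacle here: the argument is the standard Stampacchia truncation for a linear reaction–diffusion problem with a Robin boundary condition, and the only point to be careful about is book-keeping the sign of $d_k - c_k M$ using the inductive hypothesis together with the definition of $M$, which is exactly tailored to absorb the boundary data and the nutrient supply.
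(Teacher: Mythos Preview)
Your proof is correct and follows essentially the same approach as the paper: induction on $k$, testing the weak formulation \eqref{eq:nutrient_kk_variational_formulation} with the negative part $(\sigmak)^-$ for the lower bound and with $(\sigmak-M)^+$ for the upper bound, and checking the sign of $d_k - c_k M$ via the inductive hypothesis and the definition of $M$. The only cosmetic difference is that the paper tests with $-(\sigmak)_-$ rather than $(\sigmak)^-$, which amounts to the same thing.
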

\begin{proof}
    Knowing that $\sigma_{\tau}^0=\sigma_0$ satisfies this property by Hypothesis \ref{hyp:initial_conditions}, we proceed by induction on $k$, so we suppose that $0 \leq \sigmakm \leq M$ and we prove that the same stands for $\sigmak$. We remind that $0 \leq \sigmack,\ \sigmagk \leq M$ and that, using the notation introduced in \eqref{eq:nutrient_kk_notation}, $c_k\geq 1/\tau$ and $d_k \geq 0$. We also recall that, given a function $F$, its positive and negative parts are defined as
    \begin{equation*}
        F_+(x) \coloneqq \max\{F(x),0\},\qquad F_-(x) \coloneqq \max\{-F(x),0\},
    \end{equation*}
    and that, if $F \in H^1$, the following relations hold
    \begin{equation}\label{eq:property_pn_part}
        \begin{split}
            \into  F\, F_{+} \dx =  \norm{ F_{+}}_{L^2}^2, &\qquad  \into  F\,  F_{-} \dx = - \norm{F_{-}}_{L^2}^2, \\
            \into \nabla F \cdot \nabla F_{+} \dx =  \norm{\nabla F_{+}}_{L^2}^2,
          & \qquad \into \nabla F \cdot \nabla F_{-} \dx =  - \norm{\nabla F_{-}}_{L^2}^2.
        \end{split} 
    \end{equation} 
    Testing \eqref{eq:nutrient_k} with $-(\sigmak)_-$, we obtain
    \begin{multline*}
        -\into \nabla \sigmak \cdot \nabla [(\sigmak)_-] \dx - \intg \alpha (\sigmak - \sigmagk) (\sigmak)_- \dA - \into c_k \sigmak (\sigmak)_- \dx\\
        = -\into d_k (\sigmak)_- \dx.
    \end{multline*}
    Using \eqref{eq:property_pn_part}, it holds
    \begin{equation*}
        \begin{split}
            \frac{1}{\tau} \norm{(\sigmak)_-}_{L^2}^2 
            &\leq \norm{\nabla [(\sigmak)_-]}_{L^2}^2 + \norm{\sqrt{c_k}(\sigmak)_-}_{L^2}^2 + \norm{\sqrt{\alpha}(\sigmak)_-}_{L_{\Gamma}^2}^2\\ 
            &= - \into d_k (\sigmak)_- \dx -\intg \alpha\, \sigmagk (\sigmak)_- \dA \leq 0,
        \end{split}
    \end{equation*}
    so $\norm{(\sigmak)_-}_{L^2}^2 = 0$ (or, equivalently, $\sigmak \geq 0$ a.e. in $\Omega$).\\
    In the same way, we test \eqref{eq:nutrient_k} with $(\sigmak - M)_+$, obtaining
    \begin{equation*}
        \begin{split}
                \into \nabla \sigmak \cdot \nabla [(\sigmak - M)_+] \dx \, + \intg \alpha (\sigmak - \sigmagk) (\sigmak - M)_+ \dA&\\
                + \into c_k \sigmak (\sigmak - M)_+ \dx &= \into d_k (\sigmak - M)_+ \dx,
        \end{split}
    \end{equation*}
    that can be rewritten as
    \begin{equation*}
        \begin{split}
            \into |\nabla [(\sigmak - M)_+]|^2 \dx  &+ \intg \alpha [(\sigmak - M)_+]^2 \dA - \intg \alpha (\sigmagk - M) (\sigmak - M)_+ \dA\\
            &+ \into c_k [(\sigmak - M)_+]^2 \dx + 
              \into (c_k M - d_k)(\sigmak - M)_+ \dx = 0. 
        \end{split}
    \end{equation*}
    Noticing that
    \begin{equation*}
        c_k M - d_k = \frac{1}{\tau}(M-\sigmakm) + \Lambdac(\zkm)(M-\sigmack) + \lambda_c M g(\phikm,\zkm) \geq 0,
    \end{equation*}
   and recalling that $c_k \geq 1/\tau$, from the previous inequality it follows that $\norm{(\sigmak - M)_+}_{L^2}^2$ is equal to $0$, so $\sigmak \leq M$ a.e. in $\Omega$.\\
\end{proof}

\begin{remark}
From \Cref{sigma_bounded} and \eqref{eq:estimate_dataK} it follows that:
    \begin{itemize}
        \item $U_k = \displaystyle \bigg(\frac{\lambda_p\sigmak}{1+|W_{,\varepsilon}(\phikm,\epsilonukm,\zkm)|} -\lambda_a + \fk\bigg)g(\phikm,\zkm)$ belongs to $L^2$ with $\norm{U_k}_{L^2} \leq C$ for a positive $C$ independent of $\tau$ and $k$,
        \item $S_k = -\lambda_c \sigmak g(\phikm,\zkm) + \Lambdac(\zkm) (\sigmack - \sigmak)$ is in $L^{\infty}$ with $\norm{S_k}_{L^{\infty}} \leq C$ for a positive $C$ independent of $\tau$ and $k$.
    \end{itemize}
\end{remark}

\begin{prop} \label{prop:a_priori_estimate}
    The time-discrete solution to the problem \eqref{eq:system_tau}  constructed from \Cref{prop:discrete_existence} satisfies the following a priori estimates uniformly in $\tau:$
    \begingroup
    \allowdisplaybreaks
    \begin{align}
        \norm{\phito}_{L^{\infty}(H^1) \cap L^{2}(H^2)} + \norm{\phitu}_{L^{\infty}(H^1)} \leq C,\label{eq:estimate_phi}\\
        \tau^{-1/2} \norm{\phito - \phitu}_{L^2(L^2)}\leq C,\label{eq:estimate_phi_jump}\\
        \norm{\partial_t\varphi_{\tau}}_{L^2((H^1)')} \leq C,\label{eq:estimate_phi_derivative}\\
        \norm{\psiu'(\phito)}_{L^2(L^2)} + \norm{\psiu'(\phitu)}_{L^2(L^2)} \leq C,\label{eq:estimate_psi_derivative}\\
        \norm{\muto}_{L^2(H^1)} + \norm{\mutu}_{L^2(H^1)} \leq C,\label{eq:estimate_mu}\\
        \norm{\sigmato}_{L^{\infty}(L^2)\cap L^2(H^1)} + \norm{\sigmatu}_{L^{\infty}(L^2)\cap L^2(H^1)} \leq C,\label{eq:estimate_sigma}\\
        \norm{\partial_t\sigma_{\tau}}_{L^2((H^1)')} \leq C,\label{eq:estimate_sigma_derivative}\\
        \norm{\uto}_{L^{\infty}(H^2)} + \norm{\utu}_{L^{\infty}(H^2)} \leq C,\label{eq:estimate_u}\\
        \norm{\ut}_{W^{1,\infty}(H^1) \cap H^1(H^2)} \leq C,\label{eq:estimate_li_u}\\
        \norm{\vto}_{L^{\infty}(H^1) \cap L^2(H^2)} + \norm{\vtu}_{L^{\infty}(H^1)} \leq C,\label{eq:estimate_v}\\
        \norm{\vt}_{L^{\infty}(H^1)\cap H^1(L^2)} \leq C,\label{eq:estimate_li_v}\\
        \norm{\zto}_{L^{\infty}(W^{1,p}) \cap L^2(W^{1+\delta,p})} + \norm{\ztu}_{L^{\infty}(W^{1,p}) \cap L^2(W^{1+\delta,p})}  \leq C,\label{eq:estimate_z}\\
        \norm{z_{\tau}}_{L^{\infty}(W^{1,p})  \cap L^2(W^{1+\delta,p}) \cap H^1(L^2)} \leq C,\label{eq:estimate_li_z}\\
        \norm{-\Delta_p \zto}_{L^2(L^2)} + \norm{-\Delta_p \ztu}_{L^2(L^2)}  \leq C,\label{eq:estimate_delta_z}\\
        \norm{\beta_{\tau}(\zto)}_{L^2(L^2)}  \leq C\label{eq:estimate_xi},
    \end{align}
    \endgroup
    where $\delta \in (0,1/p)$.
\end{prop}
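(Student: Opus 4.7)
The plan is to derive one global discrete energy identity---obtained by testing each of the five equations of \eqref{eq:problemK} with a suitable discrete analogue of the time derivative of the associated primary variable---and then to recover any leftover bound by comparison and elliptic regularity. Concretely, I would test \eqref{eq:ch1_k} with $\tau\muk$, \eqref{eq:ch2_k} with $\phik-\phikm$, \eqref{eq:nutrient_k} with $\tau\sigmak$, \eqref{eq:u_k} with $\uk-\ukm=\tau\vk$, and \eqref{eq:z_k} with $\zk-\zkm$, and add the five resulting identities. The elementary identity $2(a-b)a=|a|^2-|b|^2+|a-b|^2$ generates telescoping contributions for $\tfrac{1}{2}\|\nabla\phik\|_{L^2}^2$, $\tfrac{1}{2}\|\sigmak\|_{L^2}^2$, the kinetic energy $\tfrac{1}{2}\|\vk\|_{L^2}^2$, and (from the regularising term $\tau\Dk\mu$) $\tfrac{\tau}{2}\|\muk\|_{L^2}^2$, together with non-negative jump residuals.

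The non-linear coupling terms are all reduced to telescopings by convexity. \Cref{convex_concave_decomposition} applied to $\Psi=\psiu+\psin$ yields the increment $\Psi(\phik)-\Psi(\phikm)$, and applied to $h=\hu+\hn$ it combines with the convexity of $W$ in $\varphi$ and in $\epsilonu$ to make the three $W$-derivative contributions coming from the Cahn--Hilliard, displacement, and damage tests collapse into $W(\phik,\epsilonuk,\zk)-W(\phikm,\epsilonukm,\zkm)$. Convexity of $\widehat{\beta}_{\tau}$ and of $v\mapsto\tfrac{1}{p}\|\nabla v\|_{L^p}^p$ (whose subgradient at $\zk$ is precisely $-\Delta_p\zk$ on its natural domain) provides the two remaining telescopings, while on the dissipation side I am left with the non-negative quantities $\tau\|\nabla\muk\|_{L^2}^2$, $\tau\|\nabla\sigmak\|_{L^2}^2$, $\tau\|\epsilonvk\|_{L^2}^2$ (with a coefficient bounded below by $a_*$ and the ellipticity of $\V$), and $\tau\|\Dk z\|_{L^2}^2$. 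Summing from $k=1$ to $n\leq K_\tau$, controlling the source contributions $\tau\sum\into U_k\muk\dx$, $\tau\sum\into S_k\sigmak\dx$, the Robin boundary term, and $\sum\into\pi(\zkm)(\zk-\zkm)\dx$ by Young's inequality together with the uniform bounds on $U_k$, $S_k$, $\sigmagk$ and $\pi$ provided by Hypothesis \hypref{hyp:sources} and \Cref{sigma_bounded}, and invoking \Cref{lemma:discrete_gronwall}, then closes the main estimate.

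The delicate step in the source treatment is $\tau\sum\into U_k\muk\dx$: since mass is not conserved, $\muk$ need not have zero mean, and I would split $\muk=(\muk-\overline{\muk})+\overline{\muk}$, absorb the oscillation into the dissipation $\tau\|\nabla\muk\|_{L^2}^2$ via Poincar\'e--Wirtinger, and control $\overline{\muk}$ by integrating \eqref{eq:ch2_k} over $\Omega$ and using the growth assumption \eqref{eq:property_psi_derivative} to bound $\|\Psi'(\phik)\|_{L^1}$ by $\into\Psi(\phik)\dx$---already part of the energy being estimated. The remaining estimates follow quickly from this. Comparison in \eqref{eq:ch1_k}--\eqref{eq:nutrient_k} yields \eqref{eq:estimate_phi_derivative} and \eqref{eq:estimate_sigma_derivative}; testing \eqref{eq:ch2_k} with $-\Delta\phik$ upgrades $\phito$ to $L^2(H^2)$ and gives \eqref{eq:estimate_psi_derivative}; the estimates \eqref{eq:estimate_u}--\eqref{eq:estimate_li_v} follow by applying \Cref{lemma:H2_inequality} to the rewriting $-\diver[\theta(\zk)\C\epsilonuk]=\mathbf{t}_k-\uk$ of \Cref{prop:discrete_existence} (expanding the divergence via the product rule and estimating the residual $\nabla[\theta(\zk)]\cdot\C\epsilonuk$ in $L^2$ by H\"older, exploiting $\nabla\zk\in L^p$ with $p>d$ plus a short bootstrap, and an additional second-order estimate obtained by testing the time-differenced displacement equation to reach $L^2(H^2)$ for $\vto$); finally, testing \eqref{eq:z_k} with $-\Delta_p\zk$ and using monotonicity of $\beta_\tau$ yields \eqref{eq:estimate_delta_z}, \eqref{eq:estimate_xi} follows by comparison in the same equation, and \Cref{lemma:regularity_inequality} promotes the $L^\infty(W^{1,p})$-bound on $z$ to the $L^2(W^{1+\delta,p})$-parts of \eqref{eq:estimate_z}--\eqref{eq:estimate_li_z}.

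The main obstacle is the absorption of the non-conservative source $\into U_k\muk\dx$: it rests on a rather fine cooperation of Poincar\'e--Wirtinger, the growth of $\Psi'$, and the dissipation $\tau\|\nabla\muk\|_{L^2}^2$, and this is precisely why the regularising term $\tau\Dk\mu$ is included in \eqref{eq:ch1_k} and why \eqref{eq:property_psi_derivative} is assumed. A second, structurally different, difficulty is the $H^2$ regularity for the displacement, where the special form $\V=\omega\C$ in Hypothesis \hypref{hyp:elastic_viscous_tensors} is exactly what reduces the elliptic operator $-\diver[\T_k\epsilon(\cdot)]$ to a scalar multiple of $-\diver[\C\epsilon(\cdot)]$, so that \Cref{lemma:H2_inequality} applies.
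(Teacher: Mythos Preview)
Your energy estimate is correct and matches the paper's almost word for word: the same five test functions, the same use of \Cref{convex_concave_decomposition} to collapse the three $W$-contributions into a single telescoping increment, and the same treatment of $\int U_k\muk\dx$ via Poincar\'e--Wirtinger and the growth assumption \eqref{eq:property_psi_derivative}. (One small slip: $\pi$ is only Lipschitz, not bounded, so $\int\pi(\zkm)(\zk-\zkm)\dx$ is handled via $|\pi(\zkm)|\leq C(1+|\zkm|)$ and $\|\zkm\|_{L^2}^2\leq C+C\sum_{i<k}\tau\|D_{\tau,i}z\|_{L^2}^2$, which still feeds into the Gronwall structure.) The higher-order $\varphi$ and $z$ estimates are also essentially as in the paper, just phrased as ``test with $-\Delta\phik$'' and ``test with $-\Delta_p\zk$'' rather than the equivalent ``compute $\|\psiu'(\phito)-\Delta\phito\|_{L^2}^2$'' and ``compute $\|-\Delta_p\zto+\beta_\tau(\zto)\|_{L^2}^2$''.

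The genuine gap is the displacement $H^2$ estimate. Applying \Cref{lemma:H2_inequality} to the rewriting $-\diver[\theta(\zk)\C\epsilonuk]=\mathbf{t}_k-\uk$ from \Cref{prop:discrete_existence} does \emph{not} yield a bound uniform in $\tau$, because the coefficient $\theta(\zk)=\tau\omega a(\zk)+\tau^2 h(\zk)$ degenerates like $\tau$, while the right-hand side $\mathbf{t}_k-\uk$ contains $2\ukm-\ukmm-\uk=-\tau^2\Dk^2\uu$ and $-\diver[\tau a(\zk)\V\epsilonukm]$: after dividing through by $\theta\sim\tau$ you are left with a term of size $\tau^{-1}\|\uk\|_{L^2}$ (or, equivalently, with $\|\ukm\|_{H^2}$ on the right and no gain), so the bootstrap never closes uniformly. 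This rewriting is used in the paper only to prove $\uk\in H^2$ at \emph{fixed} $\tau$. For the uniform estimate the paper instead tests \eqref{eq:u_k} directly with $-\tau\diver[\V\epsilonvk]$: the inertial term becomes, after integrating by parts and using $\vk=\mathbf{0}$ on $\Gamma$, the telescoping increment $\tfrac12\int\epsilonvk:\V\epsilonvk-\tfrac12\int\epsilonvkm:\V\epsilonvkm$, the viscous term yields the dissipation $a_*\tau\|\diver[\V\epsilonvk]\|_{L^2}^2\geq C_*\tau\|\vk\|_{H^2}^2$ via \Cref{lemma:H2_inequality}, and the remaining cross-terms are absorbed using H\"older with $\nabla\zk\in L^p$, Ehrling's lemma, and $C\tau\|\uk\|_{H^2}^2$ on the right, which closes by a second discrete Gronwall. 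This is the step that actually uses $\V=\omega\C$; without the uniform $L^\infty(H^2)$ bound on $\utu$ obtained here, the quadratic term $W_{,z}$ in the damage equation is not in $L^2(L^2)$ and your subsequent $-\Delta_p\zk$ estimate would fail as well.
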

\noindent Notice that in \cref{eq:estimate_phi,eq:estimate_v} the estimates for the retarded piecewise constant interpolants hold in weaker spaces because they are equal to the initial data in $[0,\tau]$ and the initial data are less regular than the corresponding discrete solutions at the step $k=1,\dots,K_{\tau}$.
\begin{proof}
    \textbf{\textit{Energy estimate.}} Testing \eqref{eq:ch1_k} with $\tau \muk$, we obtain:
    \begin{equation*}
        \tau \into \Dk\varphi \, \muk \dx + \tau \into |\nabla \muk|^2 \dx = \tau \into U_k\, \muk \dx - \tau \into (\muk -\mukm) \muk \dx.
    \end{equation*}
    Using the Young inequality to handle the last term, we have:
    \begin{equation}  \label{eq:energy_estimate_ch1}
            \begin{split}
                &\tau \into \Dk\varphi \, \muk \dx + \tau \into |\nabla \muk|^2 \dx + \frac{\tau}{2} \into |\muk|^2 \dx - \frac{\tau}{2} \into |\mukm|^2 \dx\\
                &\quad \leq \tau \into U_k\, \muk \dx.
            \end{split}
    \end{equation}
    Testing \eqref{eq:ch2_k} with $-(\phik-\phikm)$,
    \begin{equation*}
        \begin{split}
             -\tau &\into \muk\, \Dk \varphi \dx + \!\!\into \nabla \phik \cdot \nabla (\phik - \phikm) \dx + \!\!\into \left[\psiu'(\phik) + \psin'(\phikm)\right]\!(\phik-\phikm) \dx \\
            &+ \into W_{,\varphi}(\phik,\epsilonukm,\zkm)(\phik-\phikm) \dx + \into |\phik - \phikm|^2 \dx = 0.  
        \end{split}
    \end{equation*}
    Employing the Young inequality for the second term and \Cref{convex_concave_decomposition} for $\Psi = \psiu + \psin$, we get
    \begin{equation}  \label{eq:energy_estimate_ch2}
        \begin{split}
             -\tau &\into \muk\, \Dk \varphi \dx + \frac{1}{2}\into |\nabla \phik|^2 \dx - \frac{1}{2}\into |\nabla \phikm|^2 \dx\\
             &+ \into \Psi(\phik) \dx
             - \into \Psi(\phikm) \dx\\
             &+ \into W_{,\varphi}(\phik,\epsilonukm,\zkm)(\phik-\phikm) \dx + \into |\phik - \phikm|^2 \dx\leq 0.  
        \end{split}
    \end{equation}
    Testing \eqref{eq:nutrient_k} with $\tau \sigmak$ and applying the Young inequality for the first term, we obtain: 
    \begin{equation}  \label{eq:energy_estimate_sigma}
        \begin{split}
                \frac{1}{2} &\into  |\sigmak|^2 \dx - \frac{1}{2} \into  |\sigmakm|^2 \dx + \tau \into |\nabla \sigmak|^2 \dx + \tau \intg \alpha |\sigmak|^2 \dA\\
                &\leq  \tau \into S_k\,\sigmak \dx + \tau \intg \alpha \sigmagk\,\sigmak \dA.
        \end{split}
    \end{equation}
    Testing \eqref{eq:u_k} with $\uk-\ukm = \tau \vk$, we get:
    \begin{equation*}
        \begin{split}
            \into (\vk - \vkm) \cdot \vk \dx\, &+\, \tau \into a(\zk) \V \epsilonvk : \epsilonvk \dx\\ &+ \into W_{,\varepsilon}(\phik,\epsilonuk,\zk) : (\epsilonuk - \epsilonukm) \dx = 0. 
        \end{split}
    \end{equation*}
    Exploiting the Young inequality for the first term, the fact that $a_* \leq a$ and that $\V$ is uniformly elliptic for the second term, we have:
        \begin{equation}  \label{eq:energy_estimate_u}
        \begin{split}
            \frac{1}{2}\into |\vk|^2 \dx &- \frac{1}{2}\into |\vkm|^2 \dx + C \tau \into |\epsilonvk|^2 \dx\\ &+ \into W_{,\varepsilon}(\phik,\epsilonuk,\zk) : (\epsilonuk - \epsilonukm) \dx \leq  0. 
        \end{split}
    \end{equation}
    Finally, we test \eqref{eq:z_k} with $\zk -\zkm$, obtaining:
    \begin{equation*}
        \begin{split}
                \tau \into& |\Dk z|^2 \dx + \into |\nabla \zk|^{p-2} \nabla \zk \cdot \nabla (\zk-\zkm) \dx \\&+ \into \beta_{\tau}(\zk)(\zk -\zkm) \dx + \into \pi(\zkm)(\zk-\zkm) \dx \\&+ \into  \Big[\Wzu(\phik,\epsilonukm, \zk)+\Wzn(\phik,\epsilonukm, \, \zkm)\Big](\zk-\zkm) \dx  = 0.
        \end{split}
    \end{equation*}
    Employing the Young for the second term, the convexity of $\widehat{\beta}_{\tau}$ for the third and moving the term with $\pi$ to the right-hand side, we get:
    \begin{equation} \label{eq:energy_estimate_z}
      \begin{split}
            \tau \into&  |\Dk z|^2 \dx + \frac{1}{p}\into |\nabla \zk|^p \dx - \frac{1}{p}\into |\nabla \zkm|^p \dx \\
            &+ \into \widehat{\beta}_{\tau}(\zk) \dx- \into \widehat{\beta}_{\tau}(\zkm) \dx \\
            &+ \into  \Big[\Wzu(\phik,\epsilonukm, \zk)+\Wzn(\phik,\epsilonukm, \, \zkm)\Big](\zk-\zkm) \dx  \\
            \leq& - \into \pi(\zkm)(\zk-\zkm) \dx =  -\tau \into \pi(\zkm)\Dk z \dx.
        \end{split}
    \end{equation}
   Now we notice that, since $W$ is convex with respect to its first and second variables, and since we can apply 
    \Cref{convex_concave_decomposition} to $W = \oset{$\smallsmile$}{W}_{3}{} + \oset{$\smallfrown$}{W}_{3}{}$, we have:
    \begin{equation*}
       \begin{split}
            &W_{,\varphi}(\phik,\epsilonukm, \zkm)(\phik-\phikm) \geq W(\phik,\epsilonukm,\zkm)- W(\phikm,\epsilonukm,\zkm),\\
            &W_{,\varepsilon}(\phik,\epsilonuk, \zkm):(\epsilonuk-\epsilonukm) \geq W(\phik,\epsilonuk,\zk)- W(\phik,\epsilonukm,\zk),\\
            &\begin{split}
                \Big[\Wzu(\phik,\epsilonukm, \zk)+\Wzn(\phik,\epsilonukm,& \, \zkm)\Big](\zk-\zkm) \\
            \geq& \,W(\phik,\epsilonukm,\zk)- W(\phik,\epsilonukm,\zkm).
            \end{split}
       \end{split}
    \end{equation*}
    So, by summing the three above inequalities, we obtain that the left-hand side is greater or equal than
    \begin{equation*}
        W(\phik,\epsilonuk,\zk)- W(\phikm,\epsilonukm,\zkm).
    \end{equation*}
    Adding \eqref{eq:energy_estimate_ch1}, \eqref{eq:energy_estimate_ch2}, \eqref{eq:energy_estimate_sigma}, \eqref{eq:energy_estimate_u}, \eqref{eq:energy_estimate_z} and employing the previous inequality regarding $W$, we infer that:
     \begingroup
    \allowdisplaybreaks
    \begin{equation} \label{eq:partial_energy_inequality}
         \begin{split}
            &\begin{split}
                \frac{\tau}{2}& \into |\muk|^2 \dx - \frac{\tau}{2} \into |\mukm|^2 \dx 
                + \frac{1}{2}\into |\nabla \phik|^2 \dx - \frac{1}{2}\into |\nabla 
                \phikm|^2 \dx\\ 
                & + \into \Psi(\phik) \dx  - \into \Psi(\phikm) \dx 
                + \frac{1}{2} \into  |\sigmak|^2 \dx - \frac{1}{2} \into  |\sigmakm|^2 \dx \\
                & + \frac{1}{2}\into |\vk|^2 \dx\, - \frac{1}{2}\into |\vkm|^2 \dx\,+ \frac{1}{p}\into |\nabla \zk|^p \dx - \frac{1}{p}\into |\nabla \zkm|^p \dx\\
                & + \into \widehat{\beta}_{\tau}(\zk) \dx  - \into \widehat{\beta}_{\tau}(\zkm) \dx+ \into W(\phik,\epsilonuk,\zk) \dx\\
                & - \into W(\phikm,\epsilonukm,\zkm) \dx+ \tau \bigg[\into |\nabla \muk|^2 \dx + \into \tau^{-1}|\phik-\phikm|^2 \dx\\
                &+ \into |\nabla \sigmak|^2 \dx + \intg \alpha |\sigmak|^2 \dA+ C \into |\epsilonvk|^2 \dx + \into |\Dk z|^2 \dx \bigg]
            \end{split}\\
            &\leq \tau \bigg[\into U_k\, \muk \dx + \into S_k\,\sigmak \dx + \intg \alpha \sigmagk\,\sigmak \dA - \into \pi(\zkm)\Dk z \dx \bigg] \\
            &\leq \tau \bigg[\into U_k\, \muk \dx + C + \into |\pi(\zkm)||\Dk z| \dx \bigg],
         \end{split}
    \end{equation}
    \endgroup
    where the latter inequality follows from the fact that $S_k$, $\sigmak$ and $\sigmagk$ are bounded in $L^{\infty}$ uniformly with respect to $k$ and $\tau$. Then, by the H\"older, Poincaré--Wirtinger, and Young inequalities, recalling that $\norm{U_k}_{L^2} \leq C$, we have
    \begin{equation} \label{eq:Uk_estimate}
        \begin{split}
            \into U_k\, \muk \dx &\leq \norm{U_k}_{L^2} \norm{\muk}_{L^2} \leq C \left(\norm{\muk - \langle\muk\rangle}_{L^2} + |\Omega|^{\frac{1}{2}}|\langle\muk\rangle|\right) \\
            &\leq C \left(\norm{\nabla \muk}_{L^2} + |\Omega|^{\frac{1}{2}}|\langle\muk\rangle|\right) 
            \leq \eta \into |\nabla \muk|^2 \dx + C_{\eta} + C|\langle\muk\rangle|,
        \end{split}
    \end{equation}
    where $\langle \muk \rangle$ denotes the mean value of $\muk$ and $\eta$ is a small positive constant yet to be defined. 
    Testing \eqref{eq:ch2_k} with $1$ and dividing by $|\Omega|$, we obtain 
    \begin{equation*}
        \langle \muk \rangle = \frac{1}{|\Omega|} \into \muk \dx  = \frac{1}{|\Omega|} \into \psiu'(\phik) + \psin'(\phikm) + W_{,\varphi}(\phik, \epsilonukm, \zkm) + \tau \Dk \varphi \dx.
    \end{equation*}
    Adding and subtracting $\psin'(\phik)$ and $\R\phikm$, employing growth assumption \eqref{eq:property_psi_derivative} of $\Psi$, the Lipschitz continuity of $\psin'$, and the boundedness of $h$, we have \begin{equation}\label{eq:mean_value_estimate}
        \begin{split}
            |\langle &\muk \rangle| \leq  \frac{1}{|\Omega|} \into \big|\psiu'(\phik) + \psin'(\phikm)\big| + Ch(\zkm)|\epsilonukm - \R \phik| + \tau |\Dk \varphi| \dx \\
            &\begin{split}
                \leq C\into \big|\Psi'(\phik)\big| + \big|\psin'(\phik)-\psin'(\phikm)\big| &+ h(\zkm)|\epsilonukm - \R \phikm| \\
                &+ h^*|\R\phikm -\R \phik| + \tau  |\Dk \varphi| \dx
            \end{split}\\
            &\leq C \into \Psi(\phik) + h(\zkm)|\epsilonukm - \R \phikm| + \tau |\Dk \varphi| \dx.
        \end{split}
    \end{equation}
    Using the Young inequality twice and $\C$
    strong ellipticity from Hypothesis \ref{hyp:elastic_viscous_tensors}, from the above inequality we obtain
    \begin{equation*} 
        \begin{split}
            |\langle \muk \rangle| &\leq  C \into \Psi(\phik) + h(\zkm)|\epsilonukm - \R \phikm|^2 \dx+ \tau \int \eta|\Dk \varphi|^2 + C_{\eta}\dx\\
            &\leq C \into \Psi(\phik) + W(\phikm,\epsilonukm,\zkm) \dx + \eta \into \tau^{-1} |\phik-\phikm|^2 \dx +  C_{\eta},
        \end{split}
    \end{equation*}
    where $\eta$ is the same small, positive constant introduced before.
    So, substituting in \eqref{eq:Uk_estimate}, we deduce that
    \begin{equation} \label{eq:Uk_final_estimate}
        \begin{split}
            \into U_k\, \muk \dx \leq & \,C \into \Psi(\phik) + W(\phikm,\epsilonukm,\zkm) \dx \\
            &+ \eta \into |\nabla \muk|^2 \dx + \eta \into \tau^{-1} |\phik-\phikm|^2 \dx +  C_{\eta}.
        \end{split}
    \end{equation}
    Moreover, recalling that, by Hypothesis \ref{hyp:pi}, $\pi$ is Lipschitz continuous, using the H\"older inequality and the Young inequality with a small $\eta$ yet to be defined, we get
    \begin{equation}\label{eq:pik_estimate}
        \begin{split}
            \into &|\pi(\zkm)||\Dk z| \dx \leq C \into (|\zkm| + 1)|\Dk z| \dx\\
            &\leq C (\norm{\zkm}_{L^2} + 1) \norm{\Dk z}_{L^2} \leq \eta \norm{\Dk z}_{L^2}^2 + C_{\eta} (\norm{\zkm}_{L^2}^2 + 1)\\ 
            &\leq \eta \norm{\Dk z}_{L^2}^2 + C_{\eta} \bigg(\sum_{i=1}^{k-1}\tau \norm{D_{\tau,i} z}_{L^2}^2 + 1\bigg),  
        \end{split}
    \end{equation}
    where we have also used the fact that $\zkm = z_0 + \sum_{i=1}^{k-1} \tau D_{\tau,i} z$ if $k \geq 2$ and  $\zkm = z_0$ if $k=1$.
    Finally, using inequalities \eqref{eq:Uk_final_estimate} and \eqref{eq:pik_estimate} in \eqref{eq:partial_energy_inequality}, moving to the left-hand side the terms with $\eta$ (fixing $\eta$ small enough) and summing from $k=1$ to $j$, we obtain
    \begingroup
    \allowdisplaybreaks
         \begin{align} \label{eq:energy_inequality}
            \frac{\tau}{2}& \into |\muj|^2 \dx  
            + \frac{1}{2}\into |\nabla \phij|^2 \dx + \into \Psi(\phij) \dx  + \frac{1}{2} \into  |\sigmaj|^2 \dx + \frac{1}{2}\into |\vj|^2 \dx \notag\\
            & + \frac{1}{p}\into |\nabla \zj|^p \dx
            + \into \widehat{\beta}_{\tau}(\zj) \dx   + \into W(\phij,\epsilonuj,\zj) \dx\notag\\ 
            &+ \sum_{k=1}^j\bigg[\tau \bigg(\frac{1}{2}\into |\nabla \muk|^2 \dx +\frac{1}{2} \into \tau^{-1}|\phik-\phikm|^2 \dx + \into |\nabla \sigmak|^2 \dx \\
            &+ \intg \alpha |\sigmak|^2 \dA + C \into |\epsilonvk|^2 \dx + \frac{1}{2}\into |\Dk z|^2 \dx \bigg)\bigg]\notag\\
            \leq & \, C_0 + C \sum_{k=1}^j \bigg[ \tau \bigg( \into \Psi(\phik) \dx  + \into W(\phikm,\epsilonukm,\zkm) \dx\notag\\
            &+ \sum_{i=1}^{k-1}\tau \into |D_{\tau,i} z|^2 \dx  \bigg)\bigg],\notag
        \end{align}
        \endgroup
       where $C$ does not depend on the initial data while
    \begin{equation*}
        \begin{split}
            C_0 = & \frac{1}{2}\into |\nabla \varphi_0|^2 \dx + \into \Psi(\varphi_0) \dx  + \frac{1}{2} \into  |\sigma_0|^2 \dx + \frac{1}{2}\into |\vv_0|^2 \dx + \frac{1}{p}\into |\nabla z_0|^p \dx \\
             &\quad + \into \widehat{\beta}_{\tau}(z_0) \dx  + \into W(\varphi_0,\varepsilon(\uu_0),z_0) \dx\\
             \leq \,  &C \bigg[ \norm{\varphi_0}_{H^1}^2 + \norm{\sigma_0}_{L^2}^2 + \norm{\vv_0}_{L^2}^2 + \norm{\uu_0}_{H^1}^2+\norm{z_0}_{W^{1,p}}^p + \into \Psi(\varphi_0) \dx + \into \widehat{\beta}(z_0) \dx\bigg].
        \end{split}
    \end{equation*}
    Here we used the fact that $\widehat{\beta}_{\tau}(z_0) \leq \widehat{\beta}(z_0)$ a.e. that comes directly from the definition of $\widehat{\beta}_{\tau}$ (see \cite[][Proposition 2.11, p. 39]{brezis1973}) and the following inequality regarding the elastic energy
    \begin{align*}
        \into W(\varphi_0,\varepsilon(\uu_0),z_0) \dx &= \into \frac{h(z_0)}{2} \C(\varepsilon(\uu_0) - \R \varphi_0):(\varepsilon(\uu_0) - \R \varphi_0) \dx \\
        & \leq C h^* \into |\varepsilon(\uu_0) - \R \varphi_0|^2 \dx \leq C (\norm{\uu_0}_{H^1}^2 + \norm{\varphi_0}^2).
    \end{align*}
    Applying the discrete Gronwall inequality stated in \Cref{lemma:discrete_gronwall} to \eqref{eq:energy_inequality} leads to the boundedness of the left-hand side, from which we have 
      \begin{equation}\label{eq:energy_estimate}
         \begin{split}
             &\norm{\phij}_{H^1} + \norm{\Psi(\phij)}_{L^1} + \norm{\sigmaj}_{L^2} + \norm{\vj}_{L^2}\\
             &\quad + \norm{\nabla \zj}_{L^p} + \norm{\widehat{\beta}(\zj)}_{L^2} + \norm{\epsilonuj}_{L^2} + \sum_{k=1}^{K_{\tau}} \bigg[\intk \bigg(\norm{\nabla \muk}_{L^2}^2\\
             &\quad+ \norm{\tau^{-1/2}(\phik-\phikm)}_{L^2}^2 + \norm{\nabla \sigmak}_{L^2}^2+ \norm{\epsilonvk}_{L^2}^2 + \norm{\Dk z}_{L^2}^2 \bigg) \ds \bigg]\leq C
         \end{split}
    \end{equation}
    and, as a consequence, \eqref{eq:estimate_phi_jump} and \eqref{eq:estimate_sigma}.\\

 \noindent \textbf{\textit{Energy estimate consequences.}} From the equality
    \begin{equation*}
        \zj = z_0 + \sum_{k=1}^j \intk \Dk z\ds ,
    \end{equation*}
   and \eqref{eq:energy_estimate}, we have $\norm{\zj}_{L^2} \leq C$.  By Poincaré--Wirtinger inequality,
   \begin{equation}\label{eq:estimate_zk_W1p}
       \norm{\zj}_{W^{1,p}} \leq C \left(\norm{\nabla \zj}_{L^p} + |\langle \zj \rangle|\right) \leq C.
   \end{equation}
   Here we used $\norm{\nabla \zj}_{L^p} \leq C$ by \eqref{eq:energy_estimate} and we controlled the mean value of $\zj$ with its bounded $L^2$ norm.
    We can also gain a mean value estimate for $\muk$. Combining the first line from \eqref{eq:mean_value_estimate} with \eqref{eq:energy_estimate}, we immediately obtain $|\langle \muk \rangle| \leq C$. As a consequence, exploiting Poincaré--Wirtinger inequality, it follows that
    \begin{equation*}
        \norm{\muj}_{L^2} \leq \norm{\muj - \langle \muj \rangle}_{L^2} + C|\langle \muj \rangle| \leq C \left(\norm{\nabla \muj}_{L^2} + 1 \right)
    \end{equation*}
    and, thanks to \eqref{eq:energy_estimate}, we get \eqref{eq:estimate_mu}. Notice that here we also employ $\mutu = 0$ in $[0,\tau]$ to obtain the estimate for $\mutu$. Finally, by comparison in \eqref{eq:ch1_tau} and \eqref{eq:sigma_tau}, we have \eqref{eq:estimate_phi_derivative} and \eqref{eq:estimate_sigma_derivative}.\\ 

    \noindent \textbf{\textit{Higher order estimate for the displacement.}} We test the equation \eqref{eq:u_k} with $-\tau \diver\left[\V\epsilonvk\right]$, obtaining
    \begin{align*}
        -&\into (\vk -\vkm) \cdot \diver\left[\V\epsilonvk\right] \dx + \tau \into \diver\left[h(\zk)\C\epsilonuk\right] \cdot\diver\left[\V\epsilonvk\right] \dx \\
            &\quad+ \tau \into \diver\Big[a(\zk)\V\epsilonvk\Big] \cdot \diver\left[\V\epsilonvk\right] \dx\\
            &= \tau \into \diver\left[h(\zk)\C\R\phik\right] \cdot\diver\left[\V\epsilonvk\right] \dx.
    \end{align*}
    Developing the obvious calculations in the second and third terms on the left-hand side and moving some terms to the right-hand side, we have
    \begin{equation*}
        \begin{split}
            &- \! \into \!  (\vk -\vkm) \! \cdot \! \diver\left[\V\epsilonvk\right] \! \! \dx + \tau  \! \into \!  a(\zk) \diver \Big[\V\epsilonvk\Big] \! \cdot \! \diver\left[\V\epsilonvk\right] \! \! \dx\\
            & \quad \!=  - \tau  \! \into \!  \left(h'(\zk)\C\epsilonuk \nabla\zk\right) \! \cdot \!\diver\left[\V\epsilonvk\right] \! \! \dx -\tau  \! \into \!  h(\zk)\diver\Big[\C\epsilonuk\Big] \! \cdot \!\diver\left[\V\epsilonvk\right] \! \! \dx\\
            & \quad \mathrel{\hphantom{=}} -  \tau  \! \into \!  \Big(a'(\zk)\V \epsilonvk \nabla \zk \Big) \! \cdot \! \diver\left[\V\epsilonvk\right] \! \! \dx + \tau  \! \into \!  \diver\left[h(\zk)\C\R\phik\right] \! \cdot \!\diver\left[\V\epsilonvk\right] \! \! \dx. 
        \end{split}
    \end{equation*}
    Concerning the first term on the left-hand side, recall that for every $k$ it holds $\uk=0$ on $\Gamma$ and, consequently, $\vk=0$ on $\Gamma$. Thus, it can be estimated as follows:
    \begin{equation}\label{eq:estimate_uk_I1}
        \begin{split}
            -&\into (\vk -\vkm) \cdot \diver\left[\V\epsilonvk\right] \dx = \into \left[\epsilonvk -\epsilonvkm\right] :\left[\V \epsilonvk\right] \dx\\
             &\geq \frac{1}{2} \into \epsilonvk:\V\epsilonvk \dx - \frac{1}{2} \into \epsilonvkm : \V\epsilonvkm \dx,
        \end{split}
    \end{equation}
    where the inequality holds because $\V$ is symmetric and positive-defined, so the associated quadratic form is convex.
    For the second left-hand term, since $a$ is bounded from below by a strictly positive constant by Hypothesis \ref{hyp:elastic_viscous_coeff}, using \Cref{lemma:H2_inequality} (and the fact that $\vk = 0$ on $\Gamma$), we obtain
    \begin{equation}\label{eq:estimate_uk_I5}
        \begin{split}
            \tau \into \Big(a(\zk) \diver \Big[\V\epsilonvk\Big]\Big) &\cdot \diver\left[\V\epsilonvk\right] \dx \\
            &\geq a_* \tau \into \Big|\diver\left[\V\epsilonvk\right]\Big|^2 \dx
            \geq C_* \tau \norm{\vk}_{H^2}^2.
        \end{split}
    \end{equation}
    The first term on the right-hand side can be estimated as follows:
    \begin{equation*}
        \begin{split}
           \bigg| - \tau \into \left(h'(\zk)\C\epsilonuk \nabla\zk\right) \cdot\diver\left[\V\epsilonvk\right] \dx \bigg| \leq C \tau \norm{\epsilonuk}_{L^q} \norm{\nabla \zk}_{L^p}\norm{\diver[\V\epsilonvk]}_{L^2},
        \end{split}
    \end{equation*}
    thanks to H\"older inequality. Here $q$ is chosen to satisfy $\frac{1}{q}+\frac{1}{p}+\frac{1}{2}=1$ and, since $p > d$ and $d=2$ or $d=3$, it is easy to check that $q \in [2,6)$. So, because of the embedding $H^1 \hookrightarrow L^q$, \Cref{lemma:H2_inequality}, the energy estimate \eqref{eq:energy_estimate} and the Young inequality, it follows:
    \begin{equation}\label{eq:estimate_uk_I2}
           \bigg| - \tau \into \left(h'(\zk)\C\epsilonuk \nabla\zk\right) \cdot\diver\left[\V\epsilonvk\right] \dx \bigg| \leq C_{\eta} \tau \norm{\uk}_{H^2}^2 + \eta \tau \norm{\vk}_{H^2}^2,
    \end{equation}
    where $\eta>0$ is small and yet to be chosen.\\
    Regarding the second term on the right-hand side, since $h$ is bounded, we deduce that
    \begin{equation}\label{eq:estimate_uk_I3}
        \begin{split}
            &\bigg|-\tau \into \Big(h(\zk)\diver\Big[\C\epsilonuk\Big]\Big) \cdot\diver\left[\V\epsilonvk\right] \dx \bigg|\\
            &\quad\leq C \tau \norm{\diver[\C\epsilonuk]}_{L^2} \norm{\diver[\V\epsilonvk]}_{L^2}\leq C\tau \norm{\uk}_{H^2}\norm{\vk}_{H^2}\\
            &\quad\leq \tau C_{\eta}\norm{\uk}_{H^2}^2+ \eta\tau\norm{\vk}_{H^2}^2.
        \end{split}
    \end{equation}
    We handle the third term on the right-hand side using the fact that $a$ is Lipschitz continuous by Hypothesis \ref{hyp:elastic_viscous_coeff}, the H\"older inequality, previous estimates, the Young inequality, the embedding $H^2 \hookrightarrow L^q$, and Ehrling's Lemma stated in \Cref{thm:ehrling_lemma}, obtaining
    \begin{equation}\label{eq:estimate_uk_I4}
        \begin{split}
            &\bigg|-  \tau \into \Big[a'(\zk)\V \epsilonvk \nabla \zk \Big] \cdot \diver\left[\V\epsilonvk\right] \dx \bigg|\\
            &\quad\leq C \tau \norm{\epsilonvk}_{L^q}\norm{\nabla \zk}_{L^p}\norm{\diver[\V\epsilonvk]}_{L^2}\\
            &\quad\leq C \tau \norm{\epsilonvk}_{L^q}\norm{\vk}_{H^2} \leq C_{\eta} \tau \norm{\epsilonvk}_{L^q}^2 +\eta\norm{\vk}_{H^2}^2\\
            &\quad\leq \eta \tau \norm{\vk}_{H^2}^2 + \left( \theta \tau \norm{\vk}_{H^2}^2+ C_{\eta,\theta} \tau \norm{\vk}_{L^2}^2\right) \leq (\eta + \theta)\tau\norm{\vk}_{H^2}^2 + C_{\eta,\theta}\tau
        \end{split}
    \end{equation}
    where $\eta, \theta >0$ are small and yet to be chosen.\\
    Finally, we turn our attention to the last term on the right-hand side. After noticing that
    \begin{equation*}
        \diver\left[h(\zk)\C\R\phik\right] = \C\R\left(h'(\zk)\phik \nabla \zk + h(\zk) \nabla \phik\right) + h(\zk)\phik \diver(\C\R),
    \end{equation*}
    we use the H\"older and the Young inequalities and the previous estimates. We get
    \begin{equation}\label{eq:estimate_uk_I6}
        \begin{split}
            &\bigg|\tau \into \diver\left[h(\zk)\C\R\phik\right] \cdot\diver\left[\V\epsilonvk\right] \dx\bigg| \\
            &\quad \leq C \tau \left(\norm{\phik}_{L^q} \norm{\nabla \zk}_{L^p} +\norm{\nabla \phik}_{L^2} +\norm{\phik}_{L^2}\right)\norm{\diver[\V\epsilonvk]}_{L^2}\\
            &\quad \leq C \tau \norm{\vk}_{H^2} \leq \eta \tau \norm{\vk}_{H^2}^2 + C_{\eta} \tau.
        \end{split}
    \end{equation}
    Putting together \eqref{eq:estimate_uk_I1}--\eqref{eq:estimate_uk_I6} and fixing $\eta$ and $\theta$ small enough lead to
    \begin{equation*}
        \begin{split}
            \frac{1}{2} \into \epsilonvk:\V \epsilonvk \dx - \frac{1}{2} \into \epsilonvkm:\V \epsilonvkm \dx + \frac{ C_* }{2}\tau \norm{\vk}_{H^2}^2 \leq C \tau (1+ \norm{\uk}_{H^2}^2).
        \end{split}
    \end{equation*}
    So, summing for $k=1$ to $j$ and recalling that $\V$ is coercive by Hypothesis \ref{hyp:elastic_viscous_tensors}, we get
    \begin{equation*}
        \begin{split}
            \norm{\epsilonvk&}_{L^2}^2 + \sum_{k=1}^j\tau \norm{\vk}_{H^2}^2 \leq C_0 + C\sum_{k=1}^j \tau \norm{\uk}_{H^2}^2 \\
            &\leq C_0 + C\sum_{k=1}^j \tau \Bigg[\sum_{l=1}^k \tau \norm{\vl}_{H^2}^2+ \norm{\uu_0}_{H^2}^2 \Bigg] = C_0 +  C\sum_{k=1}^j \tau \Bigg[\sum_{l=1}^k \tau \norm{\vl}_{H^2}^2\Bigg],
        \end{split}
    \end{equation*}
    where the last equality holds changing the constant $C_0$.
    So, applying the discrete Gronwall inequality stated in \Cref{lemma:discrete_gronwall} leads to
    \begin{equation*}
        \norm{\epsilonvk}_{L^2}^2+ \sum_{k=1}^j\tau \norm{\vk}_{H^2}^2 \leq C.
    \end{equation*}
    Since we already know that $\norm{\vk}_{L^2} \leq C$ thanks to \eqref{eq:energy_inequality}, \eqref{eq:estimate_v} follows.
    Moreover, recalling the trivial identity 
    \begin{equation*}
        \uk = \uu_0 + \sum_{l=1}^k \tau \vl,
    \end{equation*}
    also \eqref{eq:estimate_u} holds true. Finally, by equation \eqref{eq:u_tau}, we can write $\partial_{t}\vt$ as
    \begin{equation*}  
      \begin{split}
            \partial_{t}\vt = \diver\left[h(\zto)\C\epsilonuto\right]-\diver\left[h(\zto)\C\R\phito\right] + \diver\left[a(\zto)\V\epsilonvto\right]
      \end{split}
    \end{equation*}
    and deduce, by comparison, that $\partial_{t}\vt$ is uniformly bounded in $L^2(0,T;L^2)$. Indeed, $h$, $a$, $\C$ and $\V$ are bounded and Lipschitz continuous. The term $\norm{\nabla \zto}_{L^{\infty}(L^p)}$ is uniformly bounded thanks to \eqref{eq:energy_estimate}. Moreover,
    \begin{equation*}
        \norm{\epsilonuto}_{L^{\infty}(H^1)} + \norm{\phito}_{L^{\infty}(H^1)} + \norm{\epsilonvto}_{L^{2}(H^1)}\leq C
    \end{equation*}
    thanks to the estimates \eqref{eq:estimate_u}, \eqref{eq:energy_estimate}, \eqref{eq:estimate_v}, and $H^1 \hookrightarrow L^q$ were $q$ is the H\"older conjugate of $\frac{2p}{p+2}$. So, the estimate \eqref{eq:estimate_li_v} follows. Since $\partial_t \ut = \vto$, \eqref{eq:estimate_u} and \eqref{eq:estimate_v} imply \eqref{eq:estimate_li_u}. \\

    \noindent\textbf{\textit{Higher order estimate for the order parameter.}} Equation \eqref{eq:ch2_tau} can be rewritten as 
    \begin{equation*}
       \psiu'(\phito)-\Delta\phito = \muto - \psin'(\phitu) +h(\ztu)\C(\epsilonutu-\R \phito) : \R - (\phito - \phitu).
    \end{equation*}
    The right-hand side belongs to $L^2(0,T;L^2)$ because $\psin'$ is Lipschitz continuous by Hypothesis \ref{hyp:psi}, $h$ and $\C$ are bounded by Hypotheses \ref{hyp:elastic_viscous_coeff} and \ref{hyp:elastic_viscous_tensors}. More specifically, the following estimate holds:
    \begin{equation*}
        \begin{split}
            \norm{\psiu'(\phito)-\Delta\phito}_{L^2(L^2)} \leq & \, \norm{\muto}_{L^2(L^2)} + C\left( \norm{\phitu}_{L^2(L^2)} + 1 \right)\\
            &+  C\left(\norm{\epsilonutu}_{L^2(L^2)}+ \norm{\phito}_{L^2(L^2)} \right) + \norm{\phito - \phitu}_{L^2(L^2)} \leq C.
        \end{split}
    \end{equation*}
    On the other hand, we have that
    \begin{equation}\label{eq:formal_inequality_estimate_psi}
        \begin{split}
            \norm{\psiu'&(\phito)-\Delta\phito}_{L^2(L^2)}^2\\&= \norm{\psiu'(\phito)}_{L^2(L^2)}^2 + \norm{- \Delta\phito}_{L^2(L^2)}^2 + 2\intto -\Delta \phito \, \psiu'(\phito) \dx \ds\\
            & \geq \norm{\psiu'(\phito)}_{L^2(L^2)}^2 + \norm{-\Delta\phito}_{L^2(L^2)}^2,
        \end{split}
    \end{equation}
    from which estimate \eqref{eq:estimate_psi_derivative} follows. 
    Observe that the inequality in \eqref{eq:formal_inequality_estimate_psi} stands because $\psiu'$ is an increasing continuous function and, therefore, a maximal monotone graph. More explicitly, if we consider its Yosida approximation $\psiu'_{\delta}$ , we have
    \begin{equation*}
        \intto -\Delta \phito \, \psiu_{\delta}'(\phito) \dx \ds = \intto  \psiu_{\delta}''(\phito) |\nabla \phito |^2 \dx \ds \geq 0,
    \end{equation*}
    because $\psiu_{\delta}'$ is monotone and Lipschitz continuous, so $\psiu_{\delta}''$ exists a.e. and it is non-negative. Moreover, $\psiu_{\delta}'(\phito) \to \psiu'(\phito)$ strongly in $L^2(0,T;L^2)$ as $\delta \to 0^+$ (see \cite[][Proposition 2.6, p. 28]{brezis1973}). So, passing to the limit in the previous expression, we deduce what we claimed. Taking into account \eqref{eq:energy_estimate}, we deduce that \eqref{eq:estimate_phi} holds. Notice that the asymmetry between $\phito$ and $\phitu$ in the estimate \eqref{eq:estimate_phi} is a consequence of the fact that $\phitu = \varphi_0$ in $[0,\tau]$ and $\varphi_0$ belongs to $H^1$, not to $H^2$.\\

    \noindent \textbf{\textit{More estimates for the damage.}} From \eqref{eq:z_tau}, we have 
    \begin{equation*}
       -\Delta_p\zto + \beta_{\tau}(\zto) = -\partial_t \zt - \pi(\ztu) - \frac{\hu'(\zto)+\hn'(\ztu)}{2}\C[\epsilonutu-\R\phito] : [\epsilonutu-\R\phito]
    \end{equation*}
    in $L^2(0,T;L^2)$. More specifically, we know that
    \begin{equation*}
        \begin{split}
            \norm{-&\Delta_p\zto + \beta_{\tau}(\zto)}_{L^2(L^2)} \\
            &\leq \,\norm{\partial_t\zt}_{L^2(L^2)} +C\left(\norm{\ztu}_{L^2(L^2)} + 1 +  \norm{\epsilonutu}_{L^4(L^4)}^2 + \norm{\phito}_{L^4(L^4)}^2 \right)\\
            &\leq C\left(\norm{\partial_t\zt}_{L^2(L^2)} + \norm{\ztu}_{L^2(L^2)} + \norm{\utu}_{L^{\infty}(H^2)}^2 + \norm{\phito}_{L^{\infty}(H^1)}^2 + 1\right) \leq C,
        \end{split}
    \end{equation*}
     making use of the previous estimates, the Hypothesis \ref{hyp:pi} according to which $\pi$ is Lipschitz continuous, the fact that $\hu$ and $\hn$ are continuous, the uniform boundedness of $\norm{\zto}_{L^{\infty}(Q)}+\norm{\ztu}_{L^{\infty}(Q)}$ from \eqref{eq:estimate_zk_W1p}, and the embedding $W^{1,p} \hookrightarrow C^0(\Omega)$. 
    On the other hand, 
    \begin{equation*}
        \begin{split}
            \norm{-&\Delta_p\zto + \beta_{\tau}(\zto)}_{L^2(L^2)}^2 \\
            &= \norm{-\Delta_p\zto}_{L^2(L^2)}^2 + \norm{\beta_{\tau}(\zto)}_{L^2(L^2)}^2 + 2\intto -\Delta_p\zto \,\beta_{\tau}(\zto) \dx \ds\\
            &= \norm{-\Delta_p\zto}_{L^2(L^2)}^2 + \norm{\beta_{\tau}(\zto)}_{L^2(L^2)}^2 + 2\intto  \,\beta_{\tau}'(\zto) |\nabla \zto|^p \dx \ds\\
            & \geq \norm{-\Delta_p\zto}_{L^2(L^2)}^2 + \norm{\beta_{\tau}(\zto)}_{L^2(L^2)}^2,
        \end{split}
    \end{equation*}
    where the inequality stands because $\beta_{\tau}'$ is monotone and Lipschitz continuous (so it is a.e. differentiable with positive derivative). Thus, we have proved \eqref{eq:estimate_delta_z} and \eqref{eq:estimate_xi}, employing the fact that $z_0 \in \mathcal{D}(-\Delta_p)$ by Hypothesis \ref{hyp:initial_conditions}. Finally, to conclude the estimate \eqref{eq:estimate_z}, we make use of the inequality stated in \Cref{lemma:regularity_inequality}, from which 
    \begin{equation*}
        \norm{\zto}_{W^{1+\delta,p}} + \norm{\ztu}_{W^{1+\delta,p}} \leq C_{\delta}\left(\norm{-\Delta_p \zto}_{L^2}+\norm{-\Delta_p \ztu}_{L^2} + \norm{\zto}_{L^2} + \norm{\ztu}_{L^2} \right),
    \end{equation*}
    for any $\delta \in (0, 1/p)$.
    Thanks to \eqref{eq:estimate_zk_W1p} that we have already proved, we get \eqref{eq:estimate_z}. Combining \eqref{eq:estimate_z} with the energy estimate \eqref{eq:energy_estimate}, we obtain \eqref{eq:estimate_li_z}.\\
\end{proof}
\subsection{Compactness assertions}
\begin{lemma}\label{lemma:passage_limit}
    There exists a quintuplet $(\varphi,\, \mu,\, \sigma,\, \uu,\, z)$ that satisfy the regularity of \Cref{thm:existence} such that, for a non-relabelled subsequence, we have
    \begingroup
    \allowdisplaybreaks
    \begin{align}
        &\phit \to \varphi &&\text{weakly-}\ast &&\text{in } L^{\infty}(0,T;H^1) \cap H^1(0,T;(H^1)'), \label{eq:limit_li_phi_w}\\
        & &&\text{strongly} &&\text{in } C^0([0,T];L^r) \,\forall r \in [1,6), \label{eq:limit_li_phi_s}\\
        &\phito \to \varphi &&\text{weakly-}\ast &&\text{in } L^{\infty}(0,T;H^1) \cap L^2(0,T;H^2), \label{eq:limit_phi_w}\\
        &\phito,\phitu \to \varphi &&\text{strongly} &&\text{in } L^{r}(0,T;L^r) \, \forall r \in [1,6) \text{ and a.e. in } Q, \label{eq:limit_phi_s}\\
        &\psiu'(\phito) \to \psiu'(\varphi) 
        &&\text{weakly} &&\text{in } L^{2}(0,T;L^2), \label{eq:limit_psiu_w}\\
        &\psin'(\phitu) \to \psin'(\varphi) 
        &&\text{strongly} &&\text{in } L^{2}(0,T;L^2), \label{eq:limit_psin_s}\\
        &\muto,\mutu \to \mu &&\text{weakly} &&\text{in } L^{2}(0,T;H^1), \label{eq:limit_mu_w}\\
        &\sigmat \to \sigma &&\text{weakly-}\ast &&\text{in } L^{\infty}(0,T;L^2) \cap L^{2}(0,T;H^1)\cap H^1(0,T;(H^1)'), \label{eq:limit_li_sigma_w}\\
        & &&\text{strongly} &&\text{in } L^{2}(0,T;L^r) \, \forall r \in [1,6) \text{ and a.e. in } Q,\label{eq:limit_li_sigma_s}\\
        &\sigmato \to \sigma &&\text{weakly-}\ast &&\text{in } L^{\infty}(0,T;L^2) \cap L^{2}(0,T;H^1), \label{eq:limit_sigma_w}\\
        &\ut \to \uu &&\text{weakly-}\ast &&\text{in } W^{1,\infty}(0,T;H^1) \cap H^1(0,T;H^2), \label{eq:limit_li_u_w}\\
        & &&\text{strongly} &&\text{in } C^0([0,T];X) \text{ for all } X \text{ s.t. } H^2\hookrightarrow\hookrightarrow X \hookrightarrow L^2,\label{eq:limit_li_u_s}\\
        &\uto,\utu \to \uu &&\text{weakly-}\ast &&\text{in } L^{\infty}(0,T;H^2), \label{eq:limit_u_w}\\
        & &&\text{strongly} &&\text{in } L^{\infty}(0,T;X) \text{ for all } X \text{ s.t. } H^2 \hookrightarrow\hookrightarrow X \hookrightarrow L^2,\label{eq:limit_u_s}\\
        &\vt \to \partial_t \uu &&\text{weakly-}\ast  &&\text{in }  L^{\infty}(0,T;H^1) \cap H^1(0,T;L^2), \label{eq:limit_li_v_w} \\
        &\vto \to \partial_t \uu &&\text{weakly-}\ast  &&\text{in } L^{\infty}(0,T;H^1) \cap L^{2}(0,T;H^2), \label{eq:limit_v_w}\\
        &\zt \to z &&\text{weakly-}\ast  &&\text{in } L^{\infty}(0,T;W^{1,p}) \cap L^2(0,T;W^{1+\delta,p}) \cap H^1(0,T;L^2), \label{eq:limit_li_z_w}\\
        & && \text{strongly}  &&\text{in } L^s(0,T;W^{1,p})  \, \forall s \in [1,+\infty) \text{ and a.e. in } Q,\label{eq:limit_li_z_s}\\
        &\zto,\ztu \to z &&\text{weakly-}\ast  &&\text{in } L^{\infty}(0,T;W^{1,p}) \cap L^2(0,T;W^{1+\delta,p}) \, \forall \delta \in \left(0,1/p\right),  \label{eq:limit_z_w}\\
        & &&\text{strongly}   &&\text{in } L^{s}(0,T;W^{1,p})  \, \forall s \in [1,+\infty) \text{ and a.e. in } Q, \label{eq:limit_z_s}\\
        &-\Delta_p\zto \to -\Delta_p z &&\text{weakly}  &&\text{in } L^2(0,T;L^2), \label{eq:limit_delta_z_w}\\
        &\beta_{\tau}(\zto) \to \xi &&\text{weakly}  &&\text{in } L^2(0,T;L^2) \text{ with } \xi \in \beta(z). \label{eq:limit_beta_z_w}
    \end{align}
    \endgroup
\end{lemma}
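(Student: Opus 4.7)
The plan is to exploit the uniform a priori bounds from \Cref{prop:a_priori_estimate} and extract, via the Banach--Alaoglu theorem, weakly (respectively weakly-$\ast$) convergent subsequences in each of the reflexive (respectively dual) spaces appearing in the statement. By a standard diagonal selection, we may choose a single non-relabelled subsequence along which every such weak convergence holds simultaneously. This directly yields all of the weak and weak-$\ast$ convergences \eqref{eq:limit_li_phi_w}, \eqref{eq:limit_phi_w}, \eqref{eq:limit_mu_w}, \eqref{eq:limit_li_sigma_w}, \eqref{eq:limit_sigma_w}, \eqref{eq:limit_li_u_w}, \eqref{eq:limit_u_w}, \eqref{eq:limit_li_v_w}, \eqref{eq:limit_v_w}, \eqref{eq:limit_li_z_w}, \eqref{eq:limit_z_w}, \eqref{eq:limit_delta_z_w}, as well as the $L^2$-weak convergence part of \eqref{eq:limit_psiu_w} and \eqref{eq:limit_beta_z_w}. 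The remaining work is (a) to upgrade some of these to strong convergences by Aubin--Lions type arguments, (b) to identify the limits of the piecewise constant interpolants with that of the piecewise linear one, and (c) to identify the nonlinear weak limits with the expected quantities.

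For step (a), the uniform control $\phit \in L^{\infty}(0,T;H^1) \cap H^1(0,T;(H^1)')$ combined with $H^1 \hookrightarrow\hookrightarrow L^r \hookrightarrow (H^1)'$ for any $r \in [1,6)$ gives by Aubin--Lions--Simon the strong convergence \eqref{eq:limit_li_phi_s} in $C^0([0,T];L^r)$. An analogous argument applied to $\sigmat$, $\ut$ and $\zt$ yields \eqref{eq:limit_li_sigma_s}, \eqref{eq:limit_li_u_s} and \eqref{eq:limit_li_z_s}; note that for $z$ one exploits the compact embedding $W^{1+\delta,p} \hookrightarrow\hookrightarrow W^{1,p}$ together with $\zt \in H^1(0,T;L^2)$. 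Passing to a further subsequence we may also assume a.e. pointwise convergence in $Q$. For step (b), the trivial estimate
\begin{equation*}
    \norm{\overline{w}_{\tau}-w_{\tau}}_{L^2(0,T;L^2)}^2 \leq \frac{\tau^2}{3}\sum_{k=1}^{K_{\tau}} \tau \norm{D_{\tau,k} w}_{L^2}^2 = O(\tau^2),
\end{equation*}
whose right-hand side is uniformly bounded thanks to the estimates on $\partial_t w_\tau$ for $w \in \{\varphi,\sigma,\uu,\vv,z\}$, forces $\overline{w}_\tau$ and $\underline{w}_\tau$ to share the same limit as $w_\tau$; hence \eqref{eq:limit_phi_s}, \eqref{eq:limit_u_s}, \eqref{eq:limit_z_s} follow, and $\partial_t \uu_\tau = \overline{\vv}_\tau$ identifies the limit in \eqref{eq:limit_v_w} with $\partial_t \uu$. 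For step (c), \eqref{eq:limit_psin_s} follows from the Lipschitz continuity of $\psin'$ stated in \hypref{hyp:psi} and the strong convergence of $\phitu$ in $L^2(Q)$; for \eqref{eq:limit_psiu_w}, the a.e.\ convergence $\psiu'(\phito) \to \psiu'(\varphi)$ together with the uniform $L^2(Q)$-bound \eqref{eq:estimate_psi_derivative} identifies the weak limit via a standard closedness argument for continuous maximal monotone graphs.

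The genuinely delicate point is the inclusion $\xi \in \beta(z)$ in \eqref{eq:limit_beta_z_w}, since the Moreau--Yosida parameter coincides with the vanishing time step. The plan is to exploit the property $\beta_\tau(\zto) \in \beta(J_\tau(\zto))$, where $J_\tau = (I+\tau\beta)^{-1}$ denotes the resolvent, together with the Lipschitz bound $|J_\tau(r)-r|\leq \tau |\beta_\tau(r)|$. The uniform bound \eqref{eq:estimate_xi} on $\beta_\tau(\zto)$ in $L^2(Q)$, combined with the strong convergence $\zto \to z$ in $L^2(Q)$, yields $J_\tau(\zto) \to z$ strongly in $L^2(Q)$. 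Since $\beta$ induces a maximal monotone graph on $L^2(Q)$ which is strongly-weakly closed (see for instance Brezis, Proposition 2.5), the pair $(J_\tau(\zto), \beta_\tau(\zto))$ passes to the limit in the graph of $\beta$, giving $\xi(x,t) \in \beta(z(x,t))$ for a.e. $(x,t)\in Q$. This is the main obstacle: all other identifications reduce to the above Aubin--Lions--Simon plus weak-strong pairings, while the damage inclusion requires the simultaneous passage to zero of the discretization and regularization parameters and relies crucially on the $L^2$-estimate \eqref{eq:estimate_xi} obtained via the convexity trick in \Cref{prop:a_priori_estimate}.
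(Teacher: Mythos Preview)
Your strategy matches the paper's, and your treatment of \eqref{eq:limit_beta_z_w} via the resolvent $J_\tau$ is correct and equivalent to the abstract result the paper cites. However, there is a genuine gap in step (b): the identification of a \emph{common} limit $\mu$ for $\muto$ and $\mutu$ in \eqref{eq:limit_mu_w}. You list \eqref{eq:limit_mu_w} among the ``direct'' consequences of Banach--Alaoglu, but that only gives two a priori distinct weak limits. Your jump estimate in step (b) does not apply here (as you implicitly acknowledge by omitting $\mu$ from the list $\{\varphi,\sigma,\uu,\vv,z\}$): there is no bound on $\partial_t\mu_\tau$ anywhere in \Cref{prop:a_priori_estimate}, nor any separate jump estimate for $\mu$. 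The paper fills this gap via the translation identity $\muto(t)=\mutu(t+\tau)$ tested against $\rho\in C_c^\infty(Q)$: after a change of variables and the strong convergence $\rho(\cdot,\cdot-\tau)\to\rho$, the two distributional limits are forced to coincide. Without this (or an equivalent device) the regularisation term $\muto-\mutu$ in \eqref{eq:ch1_tau} cannot be shown to vanish in the limit.

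A smaller issue: your step (b) yields $\overline{w}_\tau-w_\tau\to 0$ only in $L^2(0,T;L^2)$ (and for $w\in\{\varphi,\sigma\}$ you must in fact work in $(H^1)'$, since that is the only space where $\partial_t w_\tau$ is controlled; your claim that the right-hand side is $O(\tau^2)$ is false for these two). This suffices to identify weak limits but does not deliver the stated strong convergences of $\uto,\utu$ in $L^\infty(0,T;X)$ nor of $\zto,\ztu$ in $L^s(0,T;W^{1,p})$. For $\uu$ the paper uses the sharper bound $\|\ut-\utu\|_{L^\infty(X)}\le\tau^{1/2}\|\vto\|_{L^2(X)}$; for $z$ a fractional Gagliardo--Nirenberg interpolation between $L^p$ and $W^{1+\delta,p}$ is required to control $\|\zt-\zto\|_{W^{1,p}}$, since interpolating $L^2(L^2)$ against $L^\infty(W^{1,p})$ alone cannot recover the $W^{1,p}$ spatial norm. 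Finally, the identification of the weak limit of $-\Delta_p\zto$ with $-\Delta_p z$ in \eqref{eq:limit_delta_z_w} is not a direct consequence of Banach--Alaoglu either: it needs the strong--weak closedness of the maximal monotone operator $-\Delta_p$, exactly the mechanism you invoke for $\beta$.
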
 
\begin{proof}
    Most of the convergences are obvious from \Cref{prop:a_priori_estimate} and standard compactness results (Banach–-Alaoglu theorem and  Aubin--Lions theorem); this way, we immediately obtain \eqref{eq:limit_phi_w}, \eqref{eq:limit_li_phi_w}--\eqref{eq:limit_li_phi_s}, \eqref{eq:limit_mu_w}--\eqref{eq:limit_u_w},  \eqref{eq:limit_v_w}--\eqref{eq:limit_z_w}. In the following, we will prove the other ones, focusing on the case $d=3$, which is the most challenging due to weaker embeddings and interpolation inequalities available. The case  $d=2$ can be treated in a similar but easier way. Notice that it is easy to identify the limit of a piecewise constant interpolant and its retarded function. For example, let's prove that $\mutu$ and $\muto$ converge to the same limit. From \eqref{eq:estimate_mu}, we know that $\mutu \to \mu$ and $\muto \to \nu$ weakly in $L^2(0,T;H^1)  \hookrightarrow L^2(0,T;L^2)$. Moreover, we recall that, by definition,
    \begin{equation}\label{eq:traslation_formula_interpolants}
        \muto(t) = \mutu(t+\tau) \qquad \text{for a.e. } t \in (0,T-\tau).
    \end{equation}
    Take a test function $\rho \in C^{\infty}_c(\Omega \times (0,T))$. Since it has compact support, there exists a $\epsilon >0$ such that $\text{supp}(\rho) \subseteq \Omega \times (\epsilon, T-\epsilon)$ and definitively $2\tau < \epsilon$. By a simple change of variables, taking \eqref{eq:traslation_formula_interpolants} into account, we have
    \begin{equation*}
        \begin{split}
            \intto \muto \rho \dx \dt &= \int_{\epsilon}^{T-\epsilon}\!\!\into\mutu(x,t+\tau)\rho(x,t) \dx \dt = \int_{\epsilon+\tau}^{T + \tau -\epsilon}\!\!\into\mutu(x,s)\rho(x,s-\tau) \dx \ds\\
            &=\intto \mutu(x,s)\rho(x,s-\tau) \dx \ds 
            \to \intto \mu  \rho \dx \ds.
        \end{split}
    \end{equation*}
    Here we used the fact that $\rho(\cdot,\cdot-\tau)$ is still a test function with compact support in $(\epsilon+\tau, T+\tau -\epsilon) \subseteq (0,T-\tau)$ and then we passed to the limit because we have the product of a weak convergent sequence and a strong convergent one in $L^2(0,T;L^2)$. On the other hand, we also know that
    \begin{equation*}
        \intto \muto \rho \dx \dt \to \intto \nu \rho \dx \dt.
    \end{equation*}
    Thus, by uniqueness of the limit and the Fundamental Lemma of the Calculus of Variations, we conclude that $\mu = \nu$.
    In the following, we will discuss the less immediate limits of the statement.  To prove \eqref{eq:limit_phi_s}, we initially show that $\phito,\phitu \to \varphi$ strongly in $L^2(0,T;L^2)$. Rewriting the piecewise linear interpolant $\phit$ as
    \begin{equation*}
        \phit(t)=\phitu(t) + \frac{t-t_{k-1}}{\tau} \left[\phito(t)-\phitu(t)\right]
    \end{equation*}
    for every $t \in I_{\tau}^k$, then
    \begin{equation*}
        \begin{split}
            \norm{\phit-\phitu}^2_{L^2(L^2)} &= \sum_{k=1}^{K_{\tau}}\intk \into \left(\frac{t-t_{k-1}}{\tau}\right)^2(\phito(t)-\phitu(t))^2 \dx\dt\\ &\leq \norm{\phito-\phitu}_{L^2(L^2)}^2 \leq \tau C \to 0,
        \end{split}
    \end{equation*}
    where the last inequality is due to \eqref{eq:estimate_phi_jump}. On the other hand, by \eqref{eq:limit_li_phi_s}, $\phit$ goes to $\varphi$ strongly in $L^2(0,T;L^2)$, so also $\phitu \to \varphi$ in $L^2(0,T;L^2)$ and, using again \eqref{eq:estimate_phi_jump}, the same stands for $\phito$. We can also deduce that, along a non-relabelled subsequence, $\phito,\phitu \to \varphi$ a.e. in $Q$. 
    Since $\norm{\phitu}_{L^6(L^6)}, \norm{\phito}_{L^6(L^6)} \leq C$, as ensured by \eqref{eq:estimate_phi} and by the embedding $L^{\infty}(0,T;H^1) \hookrightarrow L^{\infty}(0,T;L^6)$, and given that $\phito, \phitu \to \varphi$ pointwise a.e., it follows that $\phito,\phitu \to \varphi$ in $L^r(0,T;L^r)$ for every $r \in [1,6)$, so \eqref{eq:limit_phi_s} stands. From \eqref{eq:estimate_psi_derivative}, $\norm{\psiu'(\phito)}_{L^2(L^2)} \leq C$; moreover, $\psiu$ is continuous and $\phito \to \varphi$ a.e., so $\psiu'(\phito) \to \psiu'(\varphi)$ a.e. in $Q$. Hence, we have also \eqref{eq:limit_psiu_w}. By \eqref{eq:limit_phi_s} and the Lipschitz continuity of $\psin'$, we get \eqref{eq:limit_psin_s}.
    In order to prove \eqref{eq:limit_u_s}, we start by noticing that for every $t \in I_{\tau}^k$
    \begin{equation*}
        \ut(t) = \utu(t) + \frac{t-t_{k-1}}{\tau}\left[\uto(t)-\utu(t)\right] =  \utu(t) + \frac{t-t_{k-1}}{\tau} \int_{I_{\tau}^k} \vto \ds,
    \end{equation*}
    from which, using \eqref{eq:estimate_v} and $H^2 \hookrightarrow X$, it follows that 
    \begin{equation*}
        \norm{\ut -\utu}_{X} \leq  \norm{\vto}_{L^2(X)} \tau^{1/2}\leq C \tau^{1/2} \to 0.
    \end{equation*}
    Since we already know that $ \ut \to \uu$ strongly in $L^{\infty}(0,T;X)$ by \eqref{eq:limit_li_u_s}, this inequality leads to \eqref{eq:limit_u_s}. Finally, we prove the convergences regarding the damage. From Aubin--Lions compactness result, $L^2(0,T;W^{1+\delta,p}) \cap H^1(0,T;L^2) \hookrightarrow \hookrightarrow L^2(0,T;W^{1,p})$ so, using \eqref{eq:estimate_li_z} and \eqref{eq:estimate_z}, along a subsequence $\zt \to z$ strongly in $L^2(0,T;W^{1,p})$. Since $\zt$ is bounded in $L^{\infty}(0,T;W^{1,p})$, we obtain \eqref{eq:limit_li_z_s}.
    As we have already observed before, for every $t \in I_{\tau}^k$ it holds
    \begin{equation*}
        \zt(t) = \zto(t) - \frac{t_{k}-t}{\tau} \int_{I_{\tau}^k} \partial_t \zt \ds
    \end{equation*}
    and, as a consequence,
    \begin{equation*}
        \norm{\zt -\zto}_{L^{\infty}(L^2)} \leq \norm{\partial_t\zt}_{L^2(L^2)} \tau^{1/2}  \leq C \tau^{1/2} \to 0.
    \end{equation*}
    Hence, we deduce that, along a subsequence, $\zt -\zto \to 0$ a.e. in $Q$. Since we know that
    \begin{equation*}
        \norm{\zt - \zto}_{L^{\infty}(Q)}  \leq C \norm{\zt}_{L^{\infty}(W^{1,p})} +  \norm{\zto}_{L^{\infty}(W^{1,p})} \leq C, 
    \end{equation*}
    we obtain that $\zt - \zto \to 0 $ strongly in $L^s(0,T;L^s)$ for every $s \in [0,+\infty)$. It trivially follows that $\zt - \zto \to 0 $ strongly in $L^s(0,T;L^t)$ for every $s,\,t \in [0,+\infty)$. Now we want to prove that a subsequence converges strongly in $L^2(0,T;W^{1,p})$. To reach our purpose, we employ the following inequality of Gagliardo--Nirenberg type for fractional Sobolev spaces (see \cite[][Theorem 1, p. 1356]{Brezis_Mironescu_2018} for further details)
    \begin{equation*}
        \norm{\zt -\zto}_{W^{1,p}} \leq C \norm{\zt -\zto}_{L^p}^{\theta} \norm{\zt -\zto}_{W^{1+\delta,p}}^{1-\theta} 
    \end{equation*}
    with $\theta = \delta/(1+\delta)$. Taking the square of this inequality, integrating over the time interval $(0,T)$ and using the H\"older inequality leads to
    \begin{equation*}
       \begin{split}
            \intt& \norm{\zt - \zto}_{W^{1,p}}^2 \dt \leq C \intt \norm{\zt -\zto}_{L^p}^{2\theta} \norm{\zt -\zto}_{W^{1+\delta,p}}^{2(1-\theta)} \dt\\
            &\leq C \left[\intt \norm{\zt -\zto}_{L^p}^{2\theta q} \dt\right]^{1/q} \left[\intt \norm{\zt -\zto}_{W^{1+\delta,p}}^{2(1-\theta)q'} \dt \right]^{1/q'}
       \end{split} 
    \end{equation*}
    where $q= 1/\theta = (1+\delta)/ \delta$ and $q'=q/(1-q)=1/(1-\theta)$ (so that $2\theta q' = 2(1-\theta)q=2$). Hence, we have
    \begin{equation*}
        \norm{\zt-\zto}_{L^2(W^{1,p})} \leq C \norm{\zt-\zto}_{L^2(L^p)}^{2/q} \norm{\zt-\zto}_{L^2(W^{1+\delta,p})}^{2/q'} \leq C  \norm{\zt-\zto}_{L^2(L^p)}^{2/q} \to 0.
    \end{equation*}
    This strong convergence, combined with the boundedness of $\zt -\zto$ in $L^{\infty}(0,T;W^{1,p})$ (that we have from \eqref{eq:estimate_z}), gives us $\norm{\zt-\zto}_{L^s(W^{1,p})} \to 0$ for every $s \in [0,+\infty)$. Since we already know \eqref{eq:limit_li_z_s}, we have \eqref{eq:limit_z_s}.
    Because of \eqref{eq:estimate_delta_z}, it exists a $w \in L^2(0,T;L^2)$ such that, along a non-relabeled subsequence, $-\Delta_p\zto \rightharpoonup w$ in $L^2(0,T;L^2)$. Then, recalling that $\zto \to z$ strongly in $L^2(0,T;L^2)$ and that the operator $-\Delta_p : L^2 \to L^2$ is maximal monotone so it is strong-weak closed (see \cite[][Proposition 2.5, p. 27]{brezis1973}), we may identify $w = -\Delta_p z$, which proves \eqref{eq:limit_delta_z_w}. Finally, from \eqref{eq:estimate_xi}, we deduce that it exits a $\xi \in L^2(0,T;L^2)$ such that $\beta_{\tau}(\zto) \rightharpoonup \xi$ in $L^2(0,T;L^2)$. Since $\beta$ is maximal monotone, $\beta_{\tau}$ is its Yosida approximation and $\zto \to z$ strongly in $L^2(0,T;L^2)$, using \cite[][Proposition 1.1, p. 42]{barbu1976}, we deduce that $\xi \in \beta(z)$ so \eqref{eq:limit_beta_z_w} stands.\\
\end{proof}

\subsection{Passage to the limit in the discrete system} \label{subsection:passage_to_the_limit}
Now we have all the instruments necessary to prove our main result, \Cref{thm:existence}. We want to exploit the compactness result \Cref{lemma:passage_limit}, proving that the limit we found is a weak solution to our problem in the sense of \Cref{defn:weak_solution}.\\ 

\noindent \textit{\textbf{Cahn--Hilliard equation.}} 
In \eqref{eq:ch1_tau}, we can easily pass to the weak limit in the terms on the left-hand side and in the second term on the right-hand side using convergence \eqref{eq:limit_li_phi_w} and \eqref{eq:limit_mu_w}. Given a $\zeta \in L^2(0,T;H^1)$, we want to prove that
\begin{equation*}
\begin{split}
    \intto \bigg(\frac{\lambda_p \sigmato}{1+|W_{,\varepsilon}(\phitu,\epsilonutu,\ztu)|}&-\lambda_a + \fto\bigg)g(\phitu,\ztu)\zeta \dt \dx\\
    &\to \intto \bigg(\frac{\lambda_p \sigma}{1+|W_{,\varepsilon}(\varphi,\varepsilon(\uu),z)|}-\lambda_a + f\bigg)g(\varphi,z) \zeta \dt \dx.
\end{split}
\end{equation*}
 First, we note that $\phitu$ (resp. $\ztu$, $\epsilonutu$) converges to $\varphi$ (resp. $z$, $\varepsilon(\uu))$ a.e. in $Q$ because of \eqref{eq:limit_phi_s} (resp.  \eqref{eq:limit_z_s}, \eqref{eq:limit_u_s}). Since $g$ and $W_{,\varepsilon}$ are continuous, $g(\phitu,\ztu) \to g(\varphi,z)$ and $W_{,\varepsilon}(\phitu,\epsilonutu,\ztu) \to W_{,\varepsilon}(\varphi,\varepsilon(\uu),z)$ a.e. in $Q$.  Moreover, $g$ is bounded and $1 + |W_{,\varepsilon}| \geq 
 1$. Finally, we recall that $\sigmato \rightharpoonup
 \sigma$ weakly in $L^2(0,T;L^2)$ from \eqref{eq:limit_sigma_w}, and $\fto \to f$ strongly in $L^2(0,T;L^2)$, so the above convergence holds.
  In \eqref{eq:ch2_tau}, exploiting convergences \eqref{eq:limit_mu_w}, \eqref{eq:limit_phi_w}, \eqref{eq:limit_psiu_w} and \eqref{eq:limit_psin_s} we can immediately pass to the weak limit in all the terms except in $W_{,\varphi}(\phito,\epsilonutu,\ztu)$. However, for every $\rho \in L^2(0,T;L^2)$, we have  
\begin{equation*}
    -\intto h(\ztu)(\epsilonutu-\R\phito):\C\R \rho \dx\dt \to -\intto h(z)(\epsilonu-\R\varphi):\C\R \rho \dx\dt, 
\end{equation*}
because $\epsilonutu -\R\phito \rightharpoonup \epsilonu-\R\varphi$ weakly in $L^2(0,T;L^2)$ (from \eqref{eq:limit_u_w} and \eqref{eq:limit_phi_w}) and $\C\R h(\ztu)\rho \to \C\R h(z)\rho$ strongly in $L^2(0,T;L^2)$. This last convergence holds true since $\C$ is bounded, $h$ is continuous and bounded, $\ztu \to z$ a.e. in $Q$ from \eqref{eq:limit_z_s}, so we can apply the Dominated Convergence Theorem.\\  
    
\noindent \textit{\textbf{Nutrient equation.}} Rewriting explicitly \eqref{eq:sigma_tau}, it holds
\begin{equation*}
    \begin{split}
        \intt &\duality{\partial_t \sigmat}{\zeta}_{H^1} \dt + \intto \nabla\sigmato \cdot \nabla\zeta \dx\dt + \alpha \intTg (\sigmato-\sigmagto)\zeta\dA\dt \\
        &= \intto \left[-\lambda_c\sigmato g(\phitu,\ztu) + \Lambdac(\ztu)(\sigmacto-\sigmato)\right]\zeta\dx\dt
    \end{split}
\end{equation*}
for every $\zeta \in L^2(0,T;H^1)$. 
As we have already pointed out, $g(\phitu,\ztu)\zeta \to g(\varphi,z)\zeta$ strongly in $L^2(0,T;L^2)$ and in the same way one can prove that $\Lambdac(\ztu)\zeta \to \Lambdac(z)\zeta$ strongly in $L^2(0,T;L^2)$. So, 
\begin{equation*}
   \begin{split}
       \intto \left[-\lambda_c\sigmato g(\phitu,\ztu) + \Lambdac(\ztu)(\sigmacto-\sigmato)\right]&\zeta\dx\dt\\ 
       \to &\intto \left[-\lambda_c\sigma g(\varphi,z) + \Lambdac(z)(\sigma_c-\sigma)\right]\zeta\dx\dt
   \end{split} 
\end{equation*}
because we also know that $\sigmato \rightharpoonup \sigma$ weakly in $L^2(0,T;L^2)$ thanks to \eqref{eq:limit_sigma_w}. Regarding the term with the boundary integral, we recall that the trace operator $H^1 \to L^2_{\Gamma}$ is linear and continuous. Thus, the weak convergence $\sigmato-\sigmagto \rightharpoonup \sigma-\sigma_{\Gamma}$ in $L^2(0,T;H^1)$, that we have from \eqref{eq:limit_sigma_w} and by construction of $\sigmagto$, leads to the weak convergences of the traces in $L^2(0,T;L^2_{\Gamma})$. All the other terms converge using \eqref{eq:limit_li_sigma_w} and \eqref{eq:limit_sigma_w}. Finally, $0 \leq \sigma \leq M$ because $\sigmat$ satisfies this property and, thanks to \eqref{eq:limit_li_sigma_s}, we have pointwise
convergence a.e. in $Q$.\\
    
\noindent \textit{\textbf{Displacement equation.}} For every $\boldsymbol{\rho} \in L^2(0,T;L^2)$, the following equality holds:
\begin{equation*}
  \begin{split}
        &\intto \partial_t \vt \cdot \boldsymbol{\rho} \dx \dt 
        - \intto h'(\zto)\C\left[ \epsilonuto -\R\phito\right] \nabla\zto \cdot \boldsymbol{\rho} \dx \dt\\
        & \quad - \intto h(\zto)\diver\left(\C\epsilonuto - \C\R\phito \right) \cdot \boldsymbol{\rho}\dx \dt
        - \intto a'(\zto)\V\epsilonvto\nabla\zto \cdot \boldsymbol{\rho} \dx \dt\\ 
        & \quad - \intto a(\zto)\diver\left(\V\epsilonvto\right) \cdot \boldsymbol{\rho} \dx \dt = 0.
  \end{split}
\end{equation*}
Thanks to \eqref{eq:limit_li_v_w}, we can pass to the weak limit in the first term. For the other more complicated addends, we proceed explicitly. Regarding the second term, we want to prove that
\begin{equation*}
    \intto  h'(\zto)\C\left[ \epsilonuto -\R\phito\right] \nabla\zto \cdot\boldsymbol{\rho} \dx \dt \to \intto  h'(z)\C\left[ \varepsilon(\uu) -\R\varphi\right] \nabla z \cdot \boldsymbol{\rho} \dx \dt. 
\end{equation*}
As we have already exploited, $\zto \to z$ a.e. in $Q$ and $h'$ is continuous, so $h'(\zto) \to h'(z)$ a.e. in Q. Moreover, from \eqref{eq:estimate_z} we know that $\norm{\zto}_{L^{\infty}(Q)} \leq C$ so, since $h'$ is continuous, $\norm{h'(\zto)}_{L^{\infty}(Q)} \leq C$. From \eqref{eq:limit_z_s}, choosing $s=p$, we get that $\nabla \zto \to \nabla z$ in $L^p(0,T;L^p)$. Hence, $h'(\zto)\nabla\zto \cdot \boldsymbol{\rho} \to h'(z)\nabla z \cdot \boldsymbol{\rho}$ strongly in $L^q(0,T;L^q)$ with $q=\frac{2p}{p+2}$. Let $q'$ be the H\"older conjugate of $q$, then it is easy to verify that $q' \in [2,6)$ if $d=3$ and $q'\in [2,+\infty)$ if $d=2$. From the boundedness of $\C$, \eqref{eq:limit_phi_w} and \eqref{eq:limit_u_w}, we have that $\C\left[ \epsilonuto -\R\phito\right] \rightharpoonup \C\left[ \varepsilon(\uu) -\R\varphi\right]$ weakly in $L^{q'}(0,T;L^{q'})$, so the desired convergence follows. To prove that
\begin{equation*}
    \intto h(\zto)\diver\left(\C\epsilonuto - \C\R\phito \right) \cdot\boldsymbol{\rho} \dx \dt \to \intto h(z)\diver\left(\C\epsilonu - \C\R\varphi \right) \cdot \boldsymbol{\rho} \dx \dt, 
\end{equation*}
we observe that $h$ is continuous and bounded and $\zto \to z$ a.e. so, thanks to the Dominated Convergence Theorem, $h(\zto)\boldsymbol{\rho} \to h(z)\boldsymbol{\rho}$ in $L^2(0,T;L^2)$. Moreover, by \eqref{eq:limit_u_w}, \eqref{eq:limit_phi_w}, and since $\C$ is bounded and Lipschitz, $\diver\left(\C\epsilonuto - \C\R\phito \right) \rightharpoonup \diver\left(\C\varepsilon(\uu) - \C\R\varphi \right)$ weakly in $L^2(0,T;L^2)$. 
Now we take into consideration the fourth term, and we are going to show that
\begin{equation*}
    \intto a'(\zto)\V\epsilonvto\nabla\zto \cdot \boldsymbol{\rho}   \dx \dt \to \intto a'(z)\V\epsilonv\nabla z \cdot \boldsymbol{\rho} \dx \dt.
\end{equation*}
Since $a'$ is continuous and $\zto \to z$ a.e. by \eqref{eq:limit_z_s}, $a'(\zto) \to a'(z)$ a.e. in $Q$. Exploiting $a'$ boundedness (that follows from the fact that $a$ is Lipschitz), by the Dominated Convergence Theorem $a'(\zto)\boldsymbol{\rho} \to a'(z) \boldsymbol{\rho}$ strongly in $L^2(0,T;L^2)$.
 Moreover, by \eqref{eq:limit_v_w} and \eqref{eq:limit_phi_w}, $\epsilonvto  \overset{\ast}{\rightharpoonup} \varepsilon(\partial_t \uu) $ weakly-$\ast$ in $L^{\infty}(0,T;L^2) \cap L^2(0,T;H^1) \hookrightarrow L^{2p/d}(0,T;L^{2p/(p-2)})$, where the embedding holds true because of Gagliardo--Nirenberg's inequality. More precisely, we apply   \Cref{thm:Gagliardo-Nieremberg_ineq} with
 \begin{equation*}
     r \in \left(2, \frac{2d}{d-2}\right), \qquad q=2, \qquad s=1, \qquad \alpha = d \left(\frac{1}{r}-\frac{d-2}{2d}\right).
 \end{equation*}
  Finally, from \eqref{eq:limit_z_s} with $s=2p/(p-d)$, we get $\nabla \zto \to \nabla z$ strongly in $L^{2p/(p-d)}(0,T;L^p)$. So, we have concluded, because
\begin{equation*}
    \frac{1}{2}+\frac{1}{2p/d}+\frac{1}{2p/(p-d)}=1,\qquad \frac{1}{2}+\frac{1}{2p/(p-2)} + \frac{1}{p} = 1.
\end{equation*}
Lastly, we aim to show that
\begin{equation*}
    \intto a(\zto)\diver\left(\V\epsilonvto \right) \cdot\boldsymbol{\rho} \dx \dt \to \intto a(z)\diver\left(\V\epsilonv \right) \cdot \boldsymbol{\rho} \dx \dt. 
\end{equation*}
Continuity and boundedness of $a$, convergences a.e. of $\zto$ from  \eqref{eq:limit_z_s}, and  the Dominated Convergence Theorem lead to $a(\zto)\boldsymbol{\rho}\to a(z)\boldsymbol{\rho}$ strongly in $L^2(0,T;L^2)$. From \eqref{eq:limit_v_w} we have that $\vto \rightharpoonup \partial_t \uu$ weakly in $L^2(0,T;H^2)$ and from Hypothesis \ref{hyp:elastic_viscous_coeff} $\V$ is bounded and Lipschitz continuous. Thus, the last term of the displacement equation passes to the limit.\\
    
\noindent \textit{\textbf{Damage equation.}} 
We discuss only the less immediate term. Consider a test function $\rho \in L^2(0,T;L^2)$. We will prove that
\begin{equation*}
    \begin{split}
        \intto\frac{\hu'(\zto)+\hn'(\ztu)}{2}(\epsilonutu-\R\phito):\C&(\epsilonutu-\R\phito) \rho \dx\dt\\
        \to &\intto \frac{h'(z)}{2}(\epsilonu-\R\varphi):\C(\epsilonu-\R\varphi) \rho \dx\dt. 
    \end{split}
\end{equation*}
Since $\zto$, $\ztu$ $\to z$ a.e. in $\Omega$, $\hu'$, $\hn'$ are continuous, and $\hu' + \hn' = h'$, we have $\frac{\hu'(\zto)+\hn'(\ztu)}{2}\to \frac{h'(z)}{2}$ a.e. in $\Omega$. Moreover, $\norm{\zto}_{L^{\infty}(Q)}$, $\norm{\ztu}_{L^{\infty}(Q)}$ are uniformly bounded by \eqref{eq:estimate_z}. It follows that $\norm{\hu'(\zto)}_{L^{\infty}(Q)}$, $\norm{\hu'(\ztu)}_{L^{\infty}(Q)} \leq C$. Using the Dominated Convergence Theorem, we deduce that $\frac{\hu'(\zto)+\hn'(\ztu)}{2} \rho\to \frac{h'(z)}{2}\rho$ strongly in $L^2(0,T;L^2)$. From \eqref{eq:limit_u_s}, choosing $X=W^{1,4}$, and  from \eqref{eq:limit_phi_s}, choosing $r=4$, we get that $\epsilonutu - \R\phito \to \epsilonu - \R\varphi$ strongly in $L^4(0,T;L^4)$. Since $\C$ is bounded, we have the desired convergence.  

\section*{Acknowledgments}
The author wishes to express her gratitude to Professor Pierluigi Colli and Professor Elisabetta Rocca for introducing her to this research area and for their numerous insightful suggestions, without which this work would not have been possible. 
The author also wishes to thank the anonymous referees for their careful reading and constructive comments, which have contributed to improving the manuscript. 
The author is a member of  GNAMPA (Gruppo Nazionale per l'Analisi Matematica, la Probabilit\`a e le loro Applicazioni) of INdAM (Istituto Nazionale di Alta Matematica).
This research activity has been supported by the MIUR-PRIN Grant 2020F3NCPX ``Mathematics for industry 4.0 (Math4I4)''.

\printbibliography

\end{document}